\documentclass[11pt]{amsart}

\usepackage[T1]{fontenc}
\usepackage{lmodern}
\usepackage[margin=1in]{geometry}
\usepackage{amsmath,amssymb,amsthm,mathtools}
\usepackage{booktabs,array,tabularx,enumitem,longtable}
\usepackage{microtype}
\usepackage{xurl}
\usepackage{tikz-cd}
\usetikzlibrary{shapes.geometric}
\usepackage{comment}
\usepackage[hidelinks]{hyperref}
\newtheorem{theorem}{Theorem}[section]
\newtheorem{proposition}[theorem]{Proposition}
\newtheorem{lemma}[theorem]{Lemma}
\newtheorem{corollary}[theorem]{Corollary}
\theoremstyle{definition}
\newtheorem{definition}[theorem]{Definition}
\newtheorem{example}[theorem]{Example}
\theoremstyle{remark}
\newtheorem{remark}[theorem]{Remark}
\numberwithin{equation}{section}

\newcommand{\Q}{\mathbb Q}
\newcommand{\C}{\mathbb C}
\newcommand{\Z}{\mathbb Z}
\newcommand{\F}{\mathbb F}

\newcommand{\Hom}{\operatorname{Hom}}
\newcommand{\HomQ}{\operatorname{Hom}_{\Q}}

\newcommand{\lcmop}{\operatorname{lcm}}
\newcommand{\G}{\mathcal G}
\newcommand{\T}{\mathcal T}
\newcommand{\DO}{Derickx--Orli\'c}

\newcommand{\set}[1]{\left\lbrace #1 \right\rbrace}

\newcolumntype{Y}{>{\raggedright\arraybackslash}X}

\newcommand{\lmfdbec}[3]{\href{https://www.lmfdb.org/EllipticCurve/Q/#1/#2/#3}{#1.#2#3}}
\newcommand{\lmfdbeciso}[2]{\href{https://www.lmfdb.org/EllipticCurve/Q/#1/#2}{#1.#2}}

\definecolor{darkgreen}{rgb}{0,0.5,0}

\definecolor{blue}{rgb}{0,0,1}
\providecommand{\petar}[1]{{ \color{blue}{Petar: #1}}}
\definecolor{darkpurple}{rgb}{0.45,0,0.55}

\definecolor{mypink}{RGB}{219,112,147}

\title[$D$-Elliptic Modular Curves $X_0(N)$]
{$D$-elliptic modular curves $X_0(N)$}
\date{Last revised \today}

\begin{document}

\begin{abstract}
We present a method for determining whether there exists a degree $D$ rational map from $X_0(N)$ to some elliptic curve $E/\Q$. Moreover, we explain how to obtain a quadratic form that represents all possible degrees of such maps using the degree pairing method. This method previously required odd analytic rank and some additional technical conditions to be satisfied. In this paper, we extended the method to even rank curves as well and remove all technical conditions. We also show how to prove or disprove the existence of degree $D$ maps to elliptic curves by a lifting criterion on the lattices \(H_1(-,\Z)\)

As an application of this method, we determine all $D$-elliptic curves $X_0(N)$ for all $D\leq 100$. Interestingly, in some cases we found degree $D$ rational maps that we could not explain by degeneracy maps, quotient maps, and modular parameterisation.

\end{abstract}

\author{\sc Maarten Derickx}
\address{Maarten Derickx\\
University of Zagreb\\  
Bijeni\v{c}ka Cesta 30 \\
10000 Zagreb\\
Croatia}
\email{maarten@mderickx.nl}
\urladdr{http://www.maartenderickx.nl/}

\author{\sc Daeyeol Jeon}
\address{Daeyeol Jeon \\ Department of Mathematics Education \\ Kongju National University \\ Gongju, 32588 South Korea}
\email{dyjeon@kongju.ac.kr}

\author{\sc Yongjae Kwon}
\address{Yongjae Kwon \\ Department of Mathematics Education \\ Kongju National University \\ Gongju, 32588 South Korea}
\email{211049@kongju.ac.kr}

\author{\sc Petar Orli\'c}
\address{Petar Orli\'c \\
University of Zagreb\\  
Bijeni\v{c}ka Cesta 30 \\
10000 Zagreb\\
Croatia}
\email{petar.orlic@math.hr}

\subjclass[2020]{Primary 11G18; Secondary 11G05, 11E25, 14H40}
\keywords{Modular curves, elliptic curves, $D$-ellipticity, degree pairings, old-coordinate lattices, quadratic forms, Shimura subgroups}

\maketitle


\section{Introduction}\label{sec:introduction}

The study of low-degree points on algebraic curves is closely connected 
with low-degree morphisms to $\mathbb P^1$ and elliptic curves; see for 
example \cite[Theorems 1.2, 1.3]{KadetsVogt}. This connection turns the question of ``which curves $X_0(N)$ admit a map of degree $D$ to an elliptic curve or $\mathbb P^1$'' into a sub-problem of determining all $X_0(N)$ that have infinitely many points of degree $D$.

Morphisms to $\mathbb P^1$ are 
measured by gonality, which has already been extensively studied, especially for modular curves \cite{Abramovich:Gonality, Kim2002, Poonen:Gonality}. In this paper we focus on morphisms to 
elliptic curves. We use the following terminology.

\begin{definition}[{\cite[Definition 3.1]{DerickxOrlic2024}}]
Let $C/\Q$ be a smooth, projective, geometrically integral curve, and let $d$ be a positive integer. The curve $C$ is \emph{$d$-elliptic over $\Q$} if there exist an elliptic curve $E/\Q$ and a morphism $C\to E$ of degree $d$. The cases $d=3,4,5$ are called \emph{trielliptic}, \emph{tetraelliptic}, and \emph{pentaelliptic}, respectively.
\end{definition}

We write $X_0(N)$ for the modular curve over $\Q$ attached to the congruence subgroup $\Gamma_0(N)$, and $J_0(N)$ for its Jacobian. All morphisms in this paper are defined over $\Q$ unless another field is specified.

The classification of $D$-elliptic curves \(X_0(N)\) is adjacent to several classical low-degree results.  Ogg~\cite{ogg1974hyperelliptic} classified the hyperelliptic curves \(X_0(N)\), Bars~\cite{Bars1999} classified the bielliptic ones, Hasegawa--Shimura and Jeon-Park~\cite{HasegawaShimura_trig,JeonPark05} treated trigonal and tetragonal \(X_0(N)\), and Jeon~\cite{Jeon2021} classified the curves \(X_0(N)\) with infinitely many cubic points. Gonality computations for \(X_0(N)\), $X_1(N)$, and $X_\Delta(N)$ provide a complementary low-degree perspective \cite{derickxVH,NajmanOrlic2023,Orlic2025Intermediate}. Furthermore, trielliptic and tetraelliptic curves \(X_1(N)\) were studied in \cite{Jeon2022,Jeon2023}.

The first result of this paper is the following theorem.
\begin{theorem}\label{thm:main1}There exists a deterministic algorithm that on input of a positive integer $D$ outputs the list of all values $N$ such that $X_0(N)$ is $D$-elliptic over $\Q$.
\end{theorem}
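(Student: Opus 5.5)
The argument splits into two parts: reducing to finitely many levels $N$, and then deciding $D$-ellipticity for each fixed level.

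\textbf{Finiteness.} If $X_0(N)\to E$ has degree $D$, then composing with a degree-$2$ map $E\to\mathbb P^1$ gives $\operatorname{gon}_\Q X_0(N)\le 2D$. Abramovich's bound gives $\operatorname{gon}_\C X_0(N)\ge \frac{7}{800}[\mathrm{PSL}_2(\Z):\Gamma_0(N)]$, and the index grows at least like $N$. Hence $N\le C\cdot D$ for an explicit constant $C$. This first step produces a finite, computable list of candidate levels $N$.

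\textbf{Fixed $N$: reduction to lattices.} Any nonconstant map $f\colon X_0(N)\to E$ over $\Q$ factors, after a translation on $E$, through the Albanese map $X_0(N)\to J_0(N)$ sending the cusp $\infty$ to $0$. The dual map $f^*\colon E\to J_0(N)$ then satisfies
\[ \deg f = \tfrac12\langle f, f\rangle, \]
where the pairing is computed via the canonical polarization. So I would:
\begin{itemize}
\item[(i)] enumerate, by modular symbols, the finitely many $\Q$-isogeny classes of elliptic curves $E$ of conductor $M\mid N$;
\item[(ii)] for each class, describe $\HomQ(J_0(N),E)$ as a free $\Z$-module. By modularity and multiplicity one, the $E$-isotypic part of $J_0(N)$ is $E^{\sigma_0(N/M)}$ up to isogeny, so $\Hom(J_0(N),E_{\mathrm{opt}})$ has rank $\sigma_0(N/M)$ and is spanned, up to finite index, by the degeneracy maps composed with the modular parametrization;
\item[(iii)] compute the degree quadratic form on this module.
\end{itemize}
The cleanest way to do (ii) and (iii) exactly is on $H_1(X_0(N),\Z)$, which is computable via modular symbols. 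A homomorphism $J_0(N)\to E$ corresponds to a $\Z$-linear, Hecke-compatible map $H_1(X_0(N),\Z)\to H_1(E,\Z)$. The saturation of the old-coordinate lattice gives the exact integral module, and the degree is $\tfrac12$ of the intersection pairing pulled back. Running over the finitely many curves $E'$ in the isogeny class, we decide whether $D$ is represented by one of these positive definite quadratic forms. Since positive definite forms represent only finitely many vectors of bounded value, this is a finite check.

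\textbf{Main obstacle.} The hard step is getting the integral structure exactly right, rather than only up to finite index. This requires two things:
\begin{itemize}
\item determining which rational combinations of degeneracy maps actually lift to homomorphisms $J_0(N)\to E$;
\item handling the case where the map has nontrivial kernel, for example through Shimura-subgroup effects.
\end{itemize}
I would try the lifting criterion on $H_1(-,\Z)$ first. A map on rational homology defines a genuine morphism exactly when it sends $H_1(X_0(N),\Z)$ into $H_1(E,\Z)$. This is a finite linear-algebra check, and it avoids the parity and rank restrictions of the earlier degree-pairing method. All the steps above are effective, so together they give the algorithm in the theorem.
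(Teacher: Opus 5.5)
Your proposal is correct. For a fixed level it follows essentially the paper's procedure:
\begin{itemize}
\item the paper also works in the $\Q$-span of the old degeneracy homomorphisms $u\circ\Phi_d$;
\item it determines the lattice $L_{E',N}$ of rational old coordinates that give genuine homomorphisms, using integrality on $H_1(-,\Z)$ (Lemmas~\ref{lem:isogenous-lift} and~\ref{lem:matrix-lift}, where the target lattice $\hat u_*H_1(E'(\C),\Z)$ handles curves $E'\neq E$);
\item it then decides whether a positive definite form represents $D$.
\end{itemize}

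\textbf{Finiteness step.} This is where your route genuinely differs. The paper bounds $N$ by Ogg's point count $\frac{p-1}{12}\psi(N)+2^{\omega(N)}\le D(p+1)^2$, with $p$ the least prime not dividing $N$. You instead use $\operatorname{gon}_\C X_0(N)\le 2D$ together with Abramovich's bound, which gives $\psi(N)\le\frac{1600}{7}D$. Your bound is linear and needs no analysis of $p$. However, it rests on a Selberg-type eigenvalue bound, and it is numerically weaker: for $D=5$ it allows $N\le 1142$, against the paper's $N\le 515$. For the mere existence of an algorithm, either bound suffices.

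\textbf{Degree form.} You compute it from the intersection pairing on $H_1(X_0(N),\Z)$, via $\deg f=\langle f^*\alpha,f^*\beta\rangle$ for a symplectic basis $\alpha,\beta$ of $H_1(E,\Z)$. This is valid with no hypothesis on $N/M$. The paper instead uses the Derickx--Orli\'c degree pairing. Its explicit formula in Theorem~\ref{thm:do-degree-pairing} is far cheaper than handling the full $2g$-dimensional homology, but as quoted it assumes $N/M$ is squarefree or coprime to $M$.

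\textbf{One correction.} Hecke-compatibility of a $\Z$-linear map $H_1(X_0(N),\Z)\to H_1(E,\Z)$ does not make it a homomorphism over $\Q$. The operators $T_p$ with $p\nmid N$ act by the scalar $a_p$ on the whole $E$-isotypic part, so this condition forces neither complex-linearity nor rationality. For CM curves, such as those of conductor $27$ or $32$, even the holomorphic such maps include non-rational ones $\alpha\circ\Phi_d$. Your algorithm is correct only because you restrict to the $\Q$-span of the $\Phi_d$ (Theorem~\ref{thm:old-span}) before imposing integrality, exactly as the paper does.
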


This theorem is perhaps surprising in the sense that it is unknown whether a similar result exists for maps of degree $D$ to $\mathbb P^1$, i.e., for computing the gonality. It is based on the degree pairing method introduced in \cite{DerickxOrlic2024}. The main computational input there is a degree pairing on $\operatorname{Hom}_{\Q}(J_0(N),E)$. The existence of this pairing was already shown in general in \cite{DerickxOrlic2024}. However, methods to compute it have only been described for a strong Weil curve $E/\Q$ of odd analytic rank under the assumption of some additional technical conditions. This was enough for the application to degree $4$ and $5$ points on $X_0(N)$ in \cite{DerickxOrlic2024} and \cite{DHJOdensitydegree}.

The main contribution of this paper is an algorithm to compute the degree pairing for $X_0(N)$ in full generality, without rank conditions and other technical conditions. 

We do not provide a detailed asymptotic runtime analysis of our algorithm, and instead content ourselves with showing it is highly practical by computing all $D$-elliptic curves $X_0(N)$ for $D \leq 100$. To be precise,

\begin{theorem}\label{thm:main2}
    Let $3\leq D \leq 100$. If $g(X_0(N))\geq2$, then $X_0(N)$ is $D$-elliptic over $\Q$ if and only if $N$ is listed in \begin{center}
        \textup{\url{https://github.com/nt-lib/D-elliptic/tree/main/results}.}
    \end{center}

    The file in the repository is over $100$ pages long. For expository reasons, here we present the results for $D=3,4,5$. \begin{itemize}
        \item $X_0(N)$ is trielliptic over $\Q$ if and only if
\[
N\in
\{22,30,33,34,38,42,45,52,54,57,63,72,73,81,98,108\}.
\]
        \item $X_0(N)$ is tetraelliptic over $\Q$ if and only if
\[
\begin{split}
N\in\{&28,30,33,34,39,40,42,44,45,48,51,52,55\text{--}58,60,62\text{--}66,68\text{--}70,72,\\
&74\text{--}78,80,82,84\text{--}86,88,90,91,94,96,98\text{--}100,104,105,108\text{--}112,117\text{--}121,\\
&123,124,126,128,135,136,141\text{--}145,147,155,159,160,171,176,184,188\}.
\end{split}
\]
        \item $X_0(N)$ is pentaelliptic over $\Q$ if and only if \[
N\in
\{38,46,54,67,75,81,89,100\}.
\]
    \end{itemize}
\end{theorem}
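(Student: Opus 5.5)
The plan is to run the algorithm of Theorem~\ref{thm:main1} for each $D$ with $3\le D\le 100$ and to check that its output matches the repository list, in particular the three displayed lists for $D=3,4,5$. The argument has three parts.

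\emph{Step 1: finiteness.} We need an explicit bound $N\le N_0(D)$ beyond which $X_0(N)$ cannot be $D$-elliptic. If $f\colon X_0(N)\to E$ has degree $D$, then composing with a degree $2$ map $E\to\mathbb P^1$ shows that the $\Q$-gonality of $X_0(N)$ is at most $2D$. Abramovich's bound gives
\[
\operatorname{gon}_{\C}X_0(N)\ \ge\ \tfrac{7}{800}\,[\mathrm{SL}_2(\Z):\Gamma_0(N)],
\]
and this index is at least $N$. Hence $N\le \tfrac{1600}{7}D$, which is about $22{,}900$ for $D=100$. This bound is far too large to use directly. We would therefore sharpen it as follows:
\begin{itemize}[leftmargin=*]
\item Use the Castelnuovo--Severi inequality, or the point-counting argument: the number of $\F_{p^k}$-points of $X_0(N)$ is at most $D\cdot\#E(\F_{p^k})\le D\,(p^k+1+2p^{k/2})$. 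This is violated as soon as the supersingular points and cusps over $\F_{p^2}$ become too numerous.
\item Apply the same bound to every divisor $M\mid N$. The quotient map $X_0(N)\to X_0(M)$ would not help here, but any degree $D$ map from $X_0(N)$ forces $\operatorname{gon}_{\F_{p}}X_0(N)\le 2D$, so the $\F_p$-point count still bounds $N$ effectively.
\end{itemize}
Together these should leave a finite list of a few thousand candidate levels $N$ with $g(X_0(N))\ge 2$.

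\emph{Step 2: the degree pairing for each candidate.} For each remaining $N$ we run over the isogeny classes of elliptic curves $E/\Q$ of conductor dividing $N$. Every map $X_0(N)\to E$ factors through $J_0(N)$, so it suffices to study $\HomQ(J_0(N),E)$. This is a free $\Z$-module whose rank is the number of pairs (newform $f_E$ of level $M$, divisor $d\mid N/M$), spanned up to finite index by the degeneracy maps composed with the modular parametrisation.

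The degree of the map $X_0(N)\to E$ induced by $\phi$ is the positive definite quadratic form $\deg(\phi)=\phi\circ\phi^\vee\in\operatorname{End}(E)=\Z$. We compute its Gram matrix from the Petersson inner products of the old forms $f_E(q^d)$, which can be computed via Hecke eigenvalues, with normalisation fixed by the modular degree. Crucially, we must pin down the exact lattice $\HomQ(J_0(N),E)$, not just a finite-index sublattice. We would do this with the lifting criterion on $H_1(-,\Z)$: a $\Q$-rational combination of old-coordinate maps lies in the lattice if and only if it maps $H_1(X_0(N),\Z)$ into $H_1(E,\Z)$, and this can be checked with modular symbols. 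We must also range over all curves in the isogeny class, not only the strong Weil curve.

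\emph{Step 3: representation.} $X_0(N)$ is $D$-elliptic exactly when one of these quadratic forms represents $D$ by a vector $\phi$ that does not factor through a nontrivial isogeny. We enumerate the short vectors of norm at most $100$ with Fincke--Pohst and record every $D$ that occurs.

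The main obstacle is Step 1: getting a bound on $N$ small enough that Step 2 is feasible for all candidates. Making Steps 2--3 rigorous in the even-rank and non-strong-Weil cases is the conceptual part, but the earlier sections supply it. For Step 1 I would first try the $\F_{p^2}$ point-count bound, handling the small number of stubborn levels individually.
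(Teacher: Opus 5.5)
Your Steps 1 and 2 essentially follow the paper's route. The $\F_{p^2}$ point count at the smallest prime $p\nmid N$ is Ogg's inequality $\frac{p-1}{12}\psi(N)+2^{\omega(N)}\le D(p+1)^2$, and it already bounds $N$ on its own. So the Abramovich detour is unnecessary, and this step is not really the obstacle. Computing the exact lattice by the $H_1$ lifting criterion for every $E'$ in the isogeny class is Proposition~\ref{prop:candidate-criteria} together with Lemmas~\ref{lem:isogenous-lift}--\ref{lem:hermite-lattice-basis}.

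The genuine error is in Step 3. There you declare $X_0(N)$ to be $D$-elliptic \emph{exactly} when $D$ is represented by a vector $\phi$ that does not factor through a nontrivial isogeny, and the ``only if'' half of this is false. If $\phi=v\circ\psi$ with $v\colon E''\to E'$ an isogeny, then $\phi\circ j_N$ is still a morphism $X_0(N)\to E'$ of degree $\deg(\psi\circ j_N)\deg v=D$, and $D$-ellipticity carries no primitivity condition. Worse, under the hypotheses of Proposition~\ref{prop:quad_form_factorisation}, every morphism to a curve $E'$ not of the form $E/H$ with $H\le\Sigma_E$ factors through a nontrivial isogeny. Your restriction therefore discards such targets wholesale, and for $D=m^2d$ it also discards composites with $[m]$.

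This already breaks the displayed $D=4$ list. For $N=119$, all elliptic factors of $J_0(119)$ lie in the isogeny class of $E=X_0(17)$, since there are no elliptic curves of conductor $7$ or $119$. Here $\Sigma_E\cong\Z/4\Z$ and $Q_{E,119}=8(x^2+xy+y^2)$. Corollary~\ref{cor:new_basis} gives the form $16x^2+8xy+4y^2$ for $E/H$ with $\#H=2$, and $32x^2+8xy+2y^2$ for $E/\Sigma_E$. The fourth curve of the class is reached from $E$, $E/H$, $E/\Sigma_E$ only by isogenies of degree at least $4$, $2$, $4$, so it receives no map of degree below $8$.

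Hence, up to sign and translation, the only degree-$4$ morphism from $X_0(119)$ to an elliptic curve is $\hat w\circ G$. Here $G\colon X_0(119)\to X_0^+(119)\cong E/\Sigma_E$ is the degree-$2$ quotient and $\hat w\colon E/\Sigma_E\to E/H$ is a $2$-isogeny; this is exactly the map listed in Table~\ref{tab:positive-representative-maps}. Your procedure would therefore delete $119$ from the tetraelliptic list.

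The fix is simply to drop the restriction. Since you compute $L_{E',N}$ exactly for every $E'$ in the class, composites with isogenies are already vectors of $Q_{E',N}$. By Theorem~\ref{thm:pair-to-level}, $X_0(N)$ is $D$-elliptic if and only if some $Q_{E',N}$ represents $D$. The paper's proof for $D\le5$ does the same bookkeeping: it finds values $d\mid D$ of the forms for $E/H$ and then looks for isogenies of degree $D/d$.
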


The computations for $d \leq 100$ only took approximately $2$ days of computation using a single core. Therefore, the above theorem can most certainly be extended by using more computing power if necessary.


We restrict the main statements to $g(X_0(N))\geq 2$ because genus $0$ curves admit no non-constant maps to elliptic curves, while genus $1$ curves are themselves elliptic curves and are governed by rational isogenies and endomorphisms. 

We use the same quadratic forms to find and exclude degree $D$ maps to elliptic curves. The target elliptic curve in the present problem need not be the strong Weil curve in its isogeny class, nor does it need to have odd analytic rank (a condition for computations in \cite{DerickxOrlic2024}). We therefore work with saturated old Hom-lattices and with rationally isogenous targets (their precise definitions can be found in Section \ref{subsec:candidate-exclusion}).

The finite computation proceeds in old coordinates (defined in Section \ref{subsec:old-degree-forms}). For each level $N$, each conductor $M\mid N$, and each relevant strong Weil curve $E$ of conductor $M$, one writes a possible map as a rational linear combination of the degeneracy maps from $X_0(N)$ to $X_0(M)$ composed with the modular parametrization of $E$.

The degree-pairing formula turns the degree of the attached old curve map into a quadratic form. The search then becomes a finite search for rational coordinate vectors with degree $D$, followed by the denominator congruence and, when the target is not \(E\), the homological lifting criterion of Lemma \ref{lem:matrix-lift}.

Figure~\ref{fig:candidate-removal-flow} summarizes the overall reduction. The precise procedure, including the hypotheses under which the Shimura-subgroup description can be used, is given in Section~\ref{subsec:procedure-summary} after both constructions have been developed.

\begin{figure}[ht]
\centering
\tikzset{
overviewbox/.style={
    draw,
    rounded corners=3pt,
    align=center,
    font=\footnotesize,
    text width=.69\textwidth,
    inner xsep=6pt,
    inner ysep=5pt
},
flowarrow/.style={
    -{Latex[length=1.4mm,width=1mm]},
    semithick
}
}
\begin{tikzpicture}[x=1cm,y=1cm]
\node[overviewbox] (ogg) at (0,0)
{Bound the candidate levels using Ogg's inequality};

\node[overviewbox] (targets) at (0,-1.10)
{Determine the possible elliptic isogeny classes\\
and the relevant $\Q$-isogenous targets};

\node[overviewbox] (filters) at (0,-2.40)
{Apply preliminary degree obstructions};

\node[overviewbox] (lattices) at (0,-3.65)
{Determine the relevant Hom-lattices\\
and their associated degree forms};

\node[overviewbox] (represent) at (0,-4.90)
{Decide whether one of the degree forms represents $D$};

\draw[flowarrow] (ogg)--(targets);
\draw[flowarrow] (targets)--(filters);
\draw[flowarrow] (filters)--(lattices);
\draw[flowarrow] (lattices)--(represent);
\end{tikzpicture}

\caption{Overview of the degree $D$ existence problem.
The detailed procedure is given in Section~\ref{subsec:procedure-summary}.}
\label{fig:candidate-removal-flow}
\end{figure}

The paper is organized as follows:
\begin{itemize}
    \item In Section \ref{sec:degree-pairing}, we explain the methods for proving
    or disproving the existence of degree $D$ rational maps from $X_0(N)$ to
    an elliptic curve, via degree pairings and lattice computations. The
    subsections contain explanations and proofs of the steps in
    Figure \ref{fig:candidate-removal-flow}.
    \begin{itemize}
        \item In Section \ref{subsec:old-degree-forms}, we introduce rational
        old coordinates and the old degree form \(Q_{E,N}\). We also define
        the old-coordinate lattice \(L_{E',N}\), whose elements are precisely
        the rational old coordinates that define homomorphisms to \(E'\).

        \item In Section \ref{subsec:isogenous-lift}, we derive the degree
        equation and characterize \(L_{E',N}\) by a containment of integral
        homology images. The required containment holds precisely when the
        explicit divisibility conditions obtained from Smith normal form are
        satisfied. Column Hermite normal form then determines a \(\Z\)-basis
        of \(L_{E',N}\).

        \item In Section \ref{subsec:candidate-exclusion}, we first apply the
        modular-degree criterion and the oldform obstruction before computing
        \(L_{E',N}\). For the remaining cases, we can construct the integral
        form \(Q_{E',N}\), whose represented integers are precisely the degrees
        of morphisms to \(E'\). We also treat the case \(c_Ec_u=1\) directly. Theorem \ref{thm:pair-to-level} combines these results into a criterion
        for the existence of a morphism of degree \(D\) from \(X_0(N)\) to $E'$.

        \item In Section \ref{subsec:new_quad_forms}, under explicit hypotheses
        on the Shimura subgroup and on \(\ker\xi_{E,N}^\vee\), we compute the
        quadratic form representing all possible degrees of rational morphisms
        \(X_0(N)\to E/H\) for \(H\leq\Sigma_E\) and prove that, for a curve
        \(E'\sim_{\Q}E\), every morphism \(X_0(N)\to E'\) must factor through
        one of the curves \(E/H\). We also give a short discussion of the
        properties of \(\Sigma_E\) and \(\ker\xi_{E,N}^\vee\).

        \item In Section \ref{subsec:procedure-summary}, we assemble the
        preceding results into the procedure used in the classification and
        distinguish the preliminary obstructions from the final
        quadratic-form representation criterion.
    \end{itemize}

    \item In Section \ref{sec:positive}, we apply these methods to determine
    all $D$-elliptic curves $X_0(N)$ for $D=3,4,5$ and prove
    Theorem \ref{thm:main2} in these cases. We also provide some examples of
    quadratic forms we obtained.

    \item In Appendix \ref{app:tables}, we present
    Table \ref{tab:positive-representative-maps}, which contains explicit
    degree $D$ rational maps $X_0(N)\to E$. The maps are given by modular
    parametrizations, as well as by degeneracy and quotient maps.
\end{itemize}

\subsection{Future work}

The algorithm in Theorem \ref{thm:main1} only gives the levels $N$ for which $X_0(N)$ is $D$-elliptic. However, it does not compute explicit algebraic equations for all degree $D$ maps from $X_0(N)$ to an elliptic curve.

On the other hand, if $E$ is an elliptic curve of conductor $M\mid N$, then $\HomQ(J_0(N),E)\otimes_\Z \Q$ has an explicit basis $\set{\Phi_d}_{d \mid N/M}$ in terms of degeneracy maps; see Theorem \ref{thm:old-span}. As an intermediate step to computing all possible degrees, our algorithm produces all vectors \[\sum_{d \mid N/M} x_d \Phi_d \in \HomQ(J_0(N),E)\otimes_\Z \Q\] corresponding to degree $D$ maps to elliptic curves with respect to this basis. So, an algorithm that turns such a vector into algebraic equations for a map $f:X_0(N) \to E$ is what remains to obtain equations for all degree $D$ maps to elliptic curves.

\subsection{Data availability and reproducibility statement}
The results about $D$-elliptic curves in this paper depend on computations carried out in SageMath 10.9 \cite{SageMath}. We also used the SageMath package MD Sage (v0.1.0) which can be found at:
\begin{center}
    \url{https://github.com/koffie/mdsage/tree/v0.1.0}
\end{center}

The code for verifying all computations in this paper can be found on the GitHub repository \begin{center}
    \url{https://github.com/nt-lib/D-elliptic}.
\end{center}

The computations were performed on a server at the University of Zagreb equipped with an AMD EPYC 9175F processor (16 cores, up to 4.2 GHz) and 384 GB of RAM.

\section*{Acknowledgments}

A part of the paper was written during the authors' stay at Kongju National University and Korea Institute for Advanced Studies. We are grateful for their support and hospitality.

The first and fourth authors were supported by the Croatian Science Foundation under the project no. IP-2022-10-5008 and by the project ``Implementation of cutting-edge research and its application as part of the Scientific Center of Excellence for Quantum and Complex Systems, and Representations of Lie Algebras", PK.1.1.10.0004, European Union, European Regional Development Fund. 

The second and third authors were supported by Glocal University 30 project at Kongju National University in 2026 and by Basic Science Research Program through the National Research Foundation of Korea (NRF) funded by the Ministry of	Education (No. 2022R1A2C1010487).

\section{Quadratic forms and lattice criteria}\label{sec:degree-pairing}
We will use the following notation throughout the paper.

Let $E/\Q$ be the strong Weil curve of conductor $M\mid N$, and let
\(\pi_E:X_0(M)\to E\) be the strong Weil parametrization arising from the
newform quotient attached to $E$ by the modularity theorem
\cite[Chapter~6]{Stein2007}.

If $E'\sim_\Q E$ is an elliptic curve in the $\Q$-isogeny class of $E$, we choose an isogeny
$u:E\to E'$ that generates the rank-one $\Z$-module $\HomQ(E,E')$.
Equivalently, $u$ is a cyclic
generator of $\HomQ(E,E')$.
Let $\delta=\deg(u)$, and let $\hat u:E'\to E$ be the dual isogeny. With
minimal N\'eron differentials fixed, write
\(u^*\omega_{E'}=c_u\omega_E\).  Then \(c_u\) is a positive integer and
\(c_uc_{\hat u}=\delta\).

Write $n:=\tau(N/M)$ for the number of divisors of $N/M$, following the notation from \cite{DerickxOrlic2024}. The curve $E$ then appears with multiplicity $n$ in the $\Q$-isogeny decomposition of $J_0(N)$.

Let \(j_N:X_0(N)\to J_0(N)\) be the Abel--Jacobi map based at the cusp \(\infty\). By the universal property of the Jacobian, every morphism $f:X_0(N)\to E$ such that $f(\infty)=0$ factors uniquely through $J_0(N)$ via $j_N$. 

Let \(\Phi_E:J_0(M)\to E\) be the induced optimal quotient of Jacobians.  Thus \(E\) is the optimal elliptic quotient in its rational isogeny class.  The \emph{modular degree} of \(E\) is \(\deg(\pi_E)\).

If \(\omega_{E}\) is a N\'eron differential on \(E\), then the \emph{Manin constant} of \(\pi_E\) (see \cite{AgasheRibetStein2006}) is the positive rational number \(c_E\) satisfying \[\pi_E^*\omega_{E}=c_E\,2\pi i\,f_E(z)\,dz,\] where \(f_E\) is the normalized newform attached to \(E\). 

If $M\mid N$ and $d\mid N/M$, let \(\iota_{d,N,M}:X_0(N)\to X_0(M)\) be the standard degeneracy map, normalized as in \cite[Section~2.3]{DerickxOrlic2024}.  On the upper half-plane, \(\iota_{d,N,M}\) is induced by \(\tau\mapsto d\tau\).  On Jacobians, \(\iota_{d,N,M}\) induces a pushforward map \(\iota_{d,N,M,*}:J_0(N)\to J_0(M)\),   and hence the homomorphism \[\Phi_d=\Phi_E\circ\iota_{d,N,M,*}:J_0(N)\to E.\]  We call the maps \(\Phi_d\) the \emph{old degeneracy homomorphisms} attached to \((E,N)\).  
The word ``old'' refers to the construction of each \(\Phi_d\) from the lower-level optimal quotient \(J_0(M)\to E\) and a degeneracy map from level \(N\) to level \(M\).

Fix once and for all an order on the divisors \(d\mid N/M\), and use the resulting ordered family \((\Phi_d)_{d\mid N/M}\) as rational coordinates.

The following Theorem extends this rational-coordinate description to elliptic curves in the same \(\Q\)-isogeny class.

\begin{theorem}[{\cite{JeonKwon2026StrongWeil}}]\label{thm:old-span}
Let \(E'/\Q\) be an elliptic curve in the \(\Q\)-isogeny class of \(E\), and
let \(u:E\to E'\) be a nonzero isogeny.  Then
\[
        \HomQ(J_0(N),E')\otimes_\Z\Q
        =
        \left\langle u\circ\Phi_d : d\mid N/M\right\rangle_{\Q}.
\]
\end{theorem}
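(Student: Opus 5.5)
The plan is to reduce the statement to the case \(E'=E\) and then compute dimensions. Composition with \(u\) gives a map \(\HomQ(J_0(N),E)\otimes\Q\to\HomQ(J_0(N),E')\otimes\Q\), \(\phi\mapsto u\circ\phi\). It is an isomorphism of \(\Q\)-vector spaces: an inverse is \(\psi\mapsto \delta^{-1}\hat u\circ\psi\), since \(\hat u\circ u=[\delta]\) and \(u\circ\hat u=[\delta]\). It therefore suffices to show that the \(\Phi_d\), for \(d\mid N/M\), span \(\HomQ(J_0(N),E)\otimes\Q\). In fact I will show they form a basis.

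First I compute the dimension. By Poincaré reducibility and the Eichler--Shimura decomposition, \(J_0(N)\) is isogenous over \(\Q\) to \(\prod_{M'\mid N}\prod_{f}A_f^{\tau(N/M')}\), where \(f\) runs over Galois orbits of newforms of level \(M'\). Among these factors, \(E\) appears (up to isogeny) exactly \(n=\tau(N/M)\) times, as noted in the setup. No other factor admits a nonzero map to \(E\). For \(A_f\) with \(f\ne f_E\), this follows from Faltings' isogeny theorem, or more elementarily from comparing Hecke eigenvalues away from \(N\) via Eichler--Shimura: a nonzero map would make \(E\) a quotient of \(A_f\), forcing \(a_p(f)=a_p(E)\) for almost all \(p\), and then strong multiplicity one gives \(f=f_E\). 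Since \(\operatorname{End}_\Q(E)\otimes\Q=\Q\) (there is no CM over \(\Q\)), we get \(\dim_\Q \HomQ(J_0(N),E)\otimes\Q=n\).

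Next I show the \(n\) maps \(\Phi_d\) are linearly independent. I pull back a fixed differential, using the definition of the Manin constant and the normalization \(\iota_d\colon\tau\mapsto d\tau\). The pullback \(\Phi_d^*\omega_E\), viewed on \(X_0(N)\) through \(j_N\), equals \(c_E\,2\pi i\, f_E(d\tau)\,d(d\tau)\), a nonzero multiple of \(f_E(q^d)\,dq/q\) with multiplier \(d\,c_E\). A relation \(\sum x_d\Phi_d=0\) would therefore give \(\sum x_d\, d\, f_E(q^d)=0\) as a \(q\)-expansion. The forms \(f_E(q^d)\), \(d\mid N/M\), are linearly independent: this is the standard Atkin--Lehner--Li fact. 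Concretely, the leading term of \(f_E(q^d)\) is \(q^d\), so the matrix of these expansions is triangular. Since the map \(\phi\mapsto\phi^*\omega_E\) is injective (a homomorphism of abelian varieties killing the invariant differential is zero in characteristic \(0\)), all \(x_d\) vanish. Independence plus the dimension count gives the basis, and composing with \(u\) finishes the proof.

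I expect the main obstacle to be the multiplicity count, namely the claim that \(E\) occurs in \(J_0(N)\) exactly \(\tau(N/M)\) times with no other factor mapping to \(E\). The cleanest route is probably to avoid the full isogeny decomposition. Instead, I would use that \(\HomQ(J_0(N),E)\otimes\Q\) embeds, via pullback of \(\omega_E\), into the \(f_E\)-isotypic part of \(S_2(\Gamma_0(N),\Q)\) for the Hecke operators \(T_p\), \(p\nmid N\). That isotypic part is exactly the span of the \(f_E(q^d)\) by newform theory. Injectivity plus the independence established above then gives the equality directly. For this I must check that pullback is Hecke-equivariant for \(T_p\) with \(p\nmid N\), which follows because any such homomorphism factors through the \(f_E\)-isotypic quotient.
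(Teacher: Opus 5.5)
Your proof is correct. It follows the route the paper relies on: the paper gives no proof and only cites Jeon--Kwon, but it uses exactly that $E$ occurs with multiplicity $n=\tau(N/M)$ in the $\Q$-isogeny decomposition of $J_0(N)$, and that composition with $u$ becomes invertible after $\otimes\Q$ via $\delta^{-1}\hat u$. Your independence argument via the pullbacks $d\,c_E\,f_E(q^d)\,dq/q$ also shows the $\Phi_d$ form a basis, which is the ``uniqueness of rational old coordinates'' the paper uses later in Corollary~\ref{cor:old-form-obstruction}.

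One caveat concerns only your optional last paragraph. Justifying the Hecke-equivariance of $\phi\mapsto\phi^*\omega_E$ by ``factoring through the $f_E$-isotypic quotient'' is circular. It should come from the Eichler--Shimura congruence relation for $T_p$, $p\nmid N$, together with injectivity of reduction mod $p$ on homomorphisms; your main argument does not need it.
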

Consequently, the old-coordinate formalism for \(E\) applies to \(E'\) after replacing each \(\Phi_d\) by \(u\circ\Phi_d\).

\subsection{\texorpdfstring{Old degree form}{Old degree form}}\label{subsec:old-degree-forms}

For \(\mathbf x=(x_d)_{d\mid N/M}\in\Q^n\), define
\[\Phi_{\mathbf x}=\sum_{d\mid N/M}x_d\Phi_d \ \in\HomQ(J_0(N),E)\otimes_\Z\Q.\]
If \(\mathbf x\in\Z^n\), then
\(\Phi_{\mathbf x}:J_0(N)\to E\) is an actual homomorphism.  The problem is
to determine all \(\mathbf x\in\Q^n\setminus\Z^n\) for which
\(\Phi_{\mathbf x}\) is also an actual homomorphism.

\begin{definition}\label{def:old-degree-form}
For
\(\mathbf x\in\Z^n\), the function
\[
        \mathbf x\longmapsto
        \deg\bigl(\Phi_{\mathbf x}\circ j_N:X_0(N)\to E\bigr)
\]
is quadratic by the degree pairing construction of \cite{DerickxOrlic2024}.
Its unique extension to a quadratic form on \(\Q^n\) is the
\emph{old degree form} \(Q_{E,N}\).  Let
\(A_{E,N}\in M_n(\Z)\) be the normalized matrix of this degree
pairing, so that
\begin{equation}\label{eq:old-degree-matrix}
        Q_{E,N}(\mathbf x)
        =
        \deg(\pi_E)\,{\mathbf x}^T A_{E,N}\mathbf x.
\end{equation}
\end{definition}

The following theorem gives the entries of \(A_{E,N}\) and proves that \(Q_{E,N}\) is positive definite.

\begin{theorem}[{\cite[Proposition~2.5 and Theorem~2.13]{DerickxOrlic2024}}]\label{thm:do-degree-pairing}
The old degree form \(Q_{E,N}\) is positive definite.  Assume in addition that \(N/M\) is either squarefree or coprime to \(M\).  For divisors \(d,e\mid N/M\), put \(g=\gcd(d,e)\) and
\(\ell=\lcmop(d,e)\).  Then
\[
        (A_{E,N})_{d,e}
        =
        S_{E}(d/g)S_{E}(e/g)\frac{\psi(N)}{\psi(M\ell/g)},
\]
where \(S_{E}(n)=\sum_{m^2\mid n}\mu(m)a_{n/m^2}\) and \(\sum a_nq^n\) is the newform associated to $E$.
\end{theorem}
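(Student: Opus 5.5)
We realise the degree as a pairing on Hom-groups and then reduce each entry to a Petersson inner product of newform translates. For homomorphisms \(\phi,\psi:J_0(N)\to E\), set \(\langle\phi,\psi\rangle:=\phi\circ\psi^\vee\in\operatorname{End}(E)=\Z\), where \(\psi^\vee:E\to J_0(N)\) is the dual composed with the principal polarisations. For \(f=\phi\circ j_N\), the standard identity \(\deg(f)=f_*f^*=\phi\circ\phi^\vee\) gives \(Q_{E,N}(\mathbf x)=\langle\Phi_{\mathbf x},\Phi_{\mathbf x}\rangle\). Hence \(Q_{E,N}\) is the quadratic form of a symmetric bilinear pairing, and \((A_{E,N})_{d,e}\) is \(\langle\Phi_d,\Phi_e\rangle/\deg(\pi_E)\).

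\textbf{Positive definiteness.} We argue via the Rosati involution: the pairing \(\phi\mapsto \phi\circ\phi^\vee\) is positive on \(\HomQ(J_0(N),E)\otimes\Q\). Concretely, choose a nonzero \(m\) with \(m\Phi_{\mathbf x}\) integral. Then \(\deg(m\Phi_{\mathbf x}\circ j_N)>0\) whenever \(\Phi_{\mathbf x}\ne0\), because \(j_N(X_0(N))\) generates \(J_0(N)\). Moreover, \(\Phi_{\mathbf x}\neq0\) for \(\mathbf x\neq0\), since the \(\Phi_d\) are \(\Q\)-linearly independent: by Theorem \ref{thm:old-span}, \(E\) occurs with multiplicity \(n\), so the \(n\) maps \(\Phi_d\) span a space of dimension \(n\). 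Together these give \(Q_{E,N}(\mathbf x)>0\) for all \(\mathbf x\neq 0\).

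\textbf{Entries.} We write \(\Phi_e^\vee=\iota_{e}^*\circ\Phi_E^\vee\), so that
\[
\langle\Phi_d,\Phi_e\rangle=\Phi_E\circ\iota_{d,*}\iota_e^*\circ\Phi_E^\vee .
\]
The composite \(\iota_{d,*}\iota_e^*\in\operatorname{End}(J_0(M))\) is a double-coset operator. After factoring out the common \(g\), it equals \(\frac{\psi(N)}{\psi(M\ell/g)}\) times the operator \(T_{d/g}^{*}T_{e/g}\), built from degeneracy correspondences at level \(M\ell/g\). The scalar arises as the degree of the forgetful map \(X_0(N)\to X_0(M\ell/g)\); this is where \(\psi\) enters. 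The remaining operator acts on the \(f_E\)-isotypic piece through Hecke eigenvalues. The pullback–pushforward of \(B_{d'}\) with \(B_1\) at level \(M d'\) (with \(d'=d/g\)) acts as the "primitive" Hecke operator \(\sum_{m^2\mid d'}\mu(m)T_{d'/m^2}\), computed via the relation \(T_pT_p=T_{p^2}+p\langle p\rangle\) inverted by Möbius. Since \(\Phi_E\circ\Phi_E^\vee=\deg(\pi_E)\), the entry becomes \(\deg(\pi_E)S_E(d/g)S_E(e/g)\psi(N)/\psi(M\ell/g)\).

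\textbf{Main obstacle.} The hardest step is the Hecke-operator identity for \(\iota_{d,*}\iota_e^*\). It is exactly there that the hypothesis "\(N/M\) squarefree or coprime to \(M\)" is needed. That hypothesis makes the double cosets \(\Gamma_0(N)\backslash\cdots\) decompose prime by prime. It also makes primes dividing \(N/M\) either new to \(M\), where \(T_p\) has eigenvalue \(a_p\), or appear to exponent one, so that no \(U_p\)-versus-\(T_p\) discrepancy arises. I would first prove the prime-power case \(d=p^a\), \(e=p^b\) by explicit coset counting, then multiply over primes.
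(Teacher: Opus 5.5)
Your outline is correct and follows essentially the route of the cited Derickx--Orli\'c proof; the paper itself only quotes the result. You realise the pairing as $\phi\circ\psi^\vee$, split off the degeneracy map $X_0(N)\to X_0(M\ell/g)$ of degree $\psi(N)/\psi(M\ell/g)$, and evaluate the remaining degeneracy correspondences on $J_0(M)$ as Hecke operators acting on $f_E$ through their eigenvalues. Your reading of the hypothesis on $N/M$ is also right: it avoids primes $p\mid M$ with $p^2\mid d/g$ or $p^2\mid e/g$, where $\iota_{1,*}\iota_{p^2}^*=U_p^2$ has eigenvalue $a_{p^2}$ rather than $S_E(p^2)=a_{p^2}-1$.

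Two justifications need repair when you write it out. First, with $d'=d/g$ and $e'=e/g$, the splitting into a $d'$-factor and an $e'$-factor is base change for the square formed by the maps $\iota_1$ from $X_0(Md'e')$ to $X_0(Md')$ and $X_0(Me')$, and from these to $X_0(M)$; this square is Cartesian because $\gcd(d',e')=1$. Second, the identity you M\"obius-invert is the subgroup count $T_n=\sum_{m^2\mid n}C_{n/m^2}$ for $n$ prime to $M$, since every subgroup of order $n$ is $E[m]$ plus a cyclic part. Here $C_k=\iota_{1,*}\iota_k^*$ (at level $Mk$) is the cyclic $k$-isogeny correspondence, and the recursion $T_p^2=T_{p^2}+p\langle p\rangle$ alone does not supply this count.
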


In general, for $E' \sim_{\mathbb{Q}} E$ and a generator $u : E \to E'$, a rational old-coordinate vector
$\mathbf x \in \mathbb{Q}^n$ need not define an actual homomorphism $u \circ \Phi_{\mathbf x} \colon J_0(N) \to E'$.
The following lattice records precisely those vectors that do.

\begin{definition}\label{def:candidate-tuple-vector}

For fixed \(E'\sim_\Q E\) and a generator \(u:E\to E'\), define the
\emph{old-coordinate lattice of \(E'\) at level \(N\)}
\[
        L_{E',N}
        :=
        \left\{\mathbf x\in\Q^n \,|\,
        u\circ\Phi_{\mathbf x}\in\HomQ(J_0(N),E')\right\}.
\]

\end{definition}

Theorem \ref{thm:old-span} implies that \(L_{E',N}\) is a free
abelian group of rank \(n\) and spans \(\Q^n\) over \(\Q\).  Thus the
remaining task is to determine \(L_{E',N}\) inside \(\Q^n\).

\subsection{Degree equation and lifting through an isogeny}\label{subsec:isogenous-lift}
Fix a rational isogeny \(u:E\to E'\) of degree \(\delta\), and let
\(\hat u:E'\to E\) be the dual isogeny.
A rational old coordinate vector \(\mathbf x\) determines
an element \(u\circ\Phi_{\mathbf x}\) of
\(\HomQ(J_0(N),E')\otimes_\Z\Q\).  The lifting problem is to determine
whether \(u\circ\Phi_{\mathbf x}\) belongs to \(\HomQ(J_0(N),E')\), equivalently whether
\(\mathbf x\in L_{E',N}\).  Every vector in \(L_{E',N}\) that gives a
degree \(D\) morphism satisfies \eqref{eq:degree-equation}.

\begin{proposition}\label{prop:degree-equation}
Let \(\mathbf x=\mathbf b/\nu\), with
\(\mathbf b=(b_d)_{d\mid N/M}\in\Z^n\) and \(\nu>0\).  Suppose that
\(G=u\circ\Phi_{\mathbf x}\) is a homomorphism \(J_0(N)\to E'\) and that
\(G\circ j_N\) has degree \(D\).  Then
\begin{equation}\label{eq:degree-equation}
        \delta Q_{E,N}(\mathbf b)=D\nu^2.
\end{equation}
\end{proposition}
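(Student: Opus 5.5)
The plan is to compute the degree of $\nu G\circ j_N$ in two ways and compare them.

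\emph{First computation: via the homomorphism $G$.} Since $G$ is a genuine homomorphism $J_0(N)\to E'$, we have $\nu G = [\nu]_{E'}\circ G$. Multiplication by $\nu$ on $E'$ has degree $\nu^2$, and degrees multiply under composition of nonconstant maps of curves. Hence
\[
\deg(\nu G\circ j_N)=\nu^2 D .
\]

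\emph{Second computation: via integral coordinates.} Because $\nu\mathbf x=\mathbf b\in\Z^n$, we get $\nu G=u\circ\Phi_{\mathbf b}$, where $\Phi_{\mathbf b}:J_0(N)\to E$ is an honest homomorphism. Therefore
\[
\deg(u\circ\Phi_{\mathbf b}\circ j_N)=\deg(u)\,\deg(\Phi_{\mathbf b}\circ j_N)=\delta\,Q_{E,N}(\mathbf b).
\]
The last equality holds because, for integral vectors, $Q_{E,N}(\mathbf b)$ is by Definition~\ref{def:old-degree-form} exactly the degree of $\Phi_{\mathbf b}\circ j_N$. Equating the two computations gives \eqref{eq:degree-equation}.

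\emph{The one subtle point: nonconstancy.} Multiplicativity of degree requires every map involved to be nonconstant. The map $G\circ j_N$ has degree $D\ge 1$, so it is nonconstant. It follows that $\nu G\circ j_N$ is nonconstant, since $[\nu]$ is an isogeny. Consequently $\Phi_{\mathbf b}\circ j_N$ is nonconstant as well.

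One should also make sure the degree-$0$ convention for constant maps is consistent with $Q_{E,N}$. Since $Q_{E,N}$ is positive definite by Theorem~\ref{thm:do-degree-pairing}, a nonzero $\mathbf b$ gives a positive value, and this matches. Apart from this check, the argument is a direct unwinding of the definitions.
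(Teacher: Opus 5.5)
Your argument is correct, but it takes a different route from the paper's.

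The paper composes with the dual isogeny. It writes $\hat u\circ G=[\delta]_E\circ\Phi_{\mathbf x}=\Phi_{\delta\mathbf x}$ and computes $\deg(\hat u\circ G\circ j_N)$ in two ways: once as $\delta D$, and once as $Q_{E,N}(\delta\mathbf b/\nu)=\delta^2Q_{E,N}(\mathbf b)/\nu^2$. You instead clear the denominator. You use $\nu G=[\nu]_{E'}\circ G=u\circ\Phi_{\mathbf b}$ and compute $\deg(\nu G\circ j_N)$ once as $\nu^2D$ and once as $\delta\,Q_{E,N}(\mathbf b)$.

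\textbf{What your route buys.} You only ever evaluate $Q_{E,N}$ at the integral vector $\mathbf b$. There, by Definition~\ref{def:old-degree-form}, it is literally $\deg(\Phi_{\mathbf b}\circ j_N)$. The paper evaluates $Q_{E,N}$ at $\delta\mathbf b/\nu$, which need not be integral: Corollary~\ref{cor:old-form-obstruction} only gives $\nu\mid\delta q$. So the paper tacitly uses that the rational extension of $Q_{E,N}$ still computes degrees of genuine homomorphisms whose old coordinates are non-integral. This is true, because the degree is quadratic on all of $\HomQ(J_0(N),E)$. It also follows from exactly your scaling trick: if $k\mathbf y\in\Z^n$, compare $\deg([k]\circ\Phi_{\mathbf y}\circ j_N)$ with $Q_{E,N}(k\mathbf y)$. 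Your proof therefore also supplies the justification for that implicit step.

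\textbf{What the paper's route buys.} It transports everything to $E$ via $\hat u$. This is the same move used in Corollary~\ref{cor:old-form-obstruction} and in the homological lifting criterion, so it fits the paper's framework.

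Your nonconstancy check is fine. With the convention that constant maps have degree $0$, multiplicativity holds anyway here, since the outer maps $u$ and $[\nu]$ are isogenies.
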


\begin{proof}
Since \(\hat u\circ u=[\delta]_E\), we have
\[
        \hat u\circ G=[\delta]_E\circ\Phi_{\mathbf x}=\Phi_{\delta\mathbf x}.
\]
Thus \(\hat u\circ G\) has old coordinates \(\delta\mathbf b/\nu\).  Hence
the degree pairing gives
\[
        \deg(\hat u\circ G\circ j_N)
        =
        Q_{E,N}(\delta\mathbf b/\nu)
        =
        \frac{\delta^2Q_{E,N}(\mathbf b)}{\nu^2}.
\]
On the other hand, \(\deg(\hat u\circ G\circ j_N)=\delta D\).  Comparing the
two expressions gives \eqref{eq:degree-equation}.
\end{proof}

We call \eqref{eq:degree-equation} the \emph{degree equation}.  Equation
\eqref{eq:degree-equation} is necessary but not sufficient.  One must also decide
whether the homomorphism to \(E\) represented by
\(\delta\mathbf b/\nu\) factors through the dual isogeny
\(\hat u:E'\to E\).  Lemma \ref{lem:isogenous-lift} expresses this factorization condition in
singular homology.

\begin{lemma}\label{lem:isogenous-lift}
Let \(u:E\to E'\) be a rational isogeny, and let \(\hat u:E'\to E\) be the dual isogeny.  Composition with \(\hat u\) gives an injection \[\hat u\circ -:\HomQ(J_0(N),E')\hookrightarrow \HomQ(J_0(N),E).\]  A homomorphism \(h:J_0(N)\to E\) belongs to \(\{\hat u\circ h':h'\in\HomQ(J_0(N),E')\}\) if and only if
\begin{equation}\label{eq:isogenous-lift-containment}
	        h_*H_1(J_0(N)(\C),\Z)\subseteq
	        \hat u_*H_1(E'(\C),\Z)\subseteq H_1(E(\C),\Z).
\end{equation}
Here \(\hat u_*:H_1(E'(\C),\Z)\to H_1(E(\C),\Z)\) is the homology map induced by \(\hat u:E'\to E\).
\end{lemma}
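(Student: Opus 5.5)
We work throughout with the analytic uniformizations: for a complex abelian variety \(A\) we have \(A(\C)\cong V_A/\Lambda_A\), where \(V_A=\operatorname{Lie}(A_\C)\) and \(\Lambda_A=H_1(A(\C),\Z)\). A homomorphism \(A\to B\) of complex abelian varieties is the same thing as a \(\C\)-linear map \(V_A\to V_B\) sending \(\Lambda_A\) into \(\Lambda_B\).

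The injectivity of \(\hat u\circ-\) is immediate. \(\hat u\) is an isogeny, so \(\hat u\circ h'=0\) forces the image of \(h'\) to lie in the finite group \(\ker\hat u\). Since \(J_0(N)\) is connected, \(h'=0\).

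For the main equivalence, the forward direction is functoriality of \(H_1\). If \(h=\hat u\circ h'\), then
\[
h_*H_1(J_0(N)(\C),\Z)=\hat u_*\bigl(h'_*H_1(J_0(N)(\C),\Z)\bigr)\subseteq\hat u_*H_1(E'(\C),\Z).
\]

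For the converse, assume \eqref{eq:isogenous-lift-containment}. The map \(\hat u\) is an isogeny, so its differential \(d\hat u:V_{E'}\to V_E\) is a \(\C\)-linear isomorphism with \(d\hat u(\Lambda_{E'})=\hat u_*\Lambda_{E'}\subseteq\Lambda_E\). Define
\[
\lambda:=(d\hat u)^{-1}\circ dh:V_{J_0(N)}\to V_{E'},
\]
which is \(\C\)-linear. By the containment, \(\lambda(\Lambda_{J_0(N)})\subseteq(d\hat u)^{-1}(\hat u_*\Lambda_{E'})=\Lambda_{E'}\), using that \(d\hat u\) is injective. So \(\lambda\) descends to a holomorphic homomorphism \(h':J_0(N)_\C\to E'_\C\) satisfying \(\hat u\circ h'=h\) over \(\C\).

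The remaining point, which I expect to be the only subtle step, is rationality of \(h'\). I would argue by uniqueness via the injectivity already proved, now applied over \(\C\). For \(\sigma\in\operatorname{Aut}(\C/\Q)\), the conjugate \(\sigma(h')\) is again a homomorphism \(J_0(N)_\C\to E'_\C\), because \(J_0(N)\), \(E'\) and \(\hat u\) are defined over \(\Q\). Since \(h\) is also defined over \(\Q\),
\[
\hat u\circ\sigma(h')=\sigma(\hat u\circ h')=\sigma(h)=h=\hat u\circ h'.
\]
Injectivity of \(\hat u\circ-\) on \(\Hom_{\C}\) then gives \(\sigma(h')=h'\) for all \(\sigma\). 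Homomorphisms between abelian varieties are all defined over a finite extension of the base field, so \(h'\) is defined over some number field, and Galois descent then shows \(h'\in\HomQ(J_0(N),E')\). This exhibits \(h=\hat u\circ h'\) with \(h'\in\HomQ(J_0(N),E')\).

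Finally, the right-hand inclusion \(\hat u_*H_1(E'(\C),\Z)\subseteq H_1(E(\C),\Z)\) in the statement is automatic, since \(\hat u_*\) maps \(H_1(E'(\C),\Z)\) into \(H_1(E(\C),\Z)\) by definition.
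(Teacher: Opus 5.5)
Your proof is correct and follows essentially the same route as the paper. Injectivity comes from the connected image lying in the finite kernel, the converse lifts $(d\hat u)^{-1}\circ dh$ on universal covers, and rationality follows from Galois conjugation combined with that injectivity. Your added remark that $h'$ is a priori defined over a number field makes explicit a descent point the paper leaves implicit.
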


\begin{proof}
First, composition with \(\hat u\) is injective.  Indeed, if
\(f:J_0(N)\to E'\) satisfies \(\hat u\circ f=0\), then the connected
subgroup \(f(J_0(N))\) is contained in the finite group \(\ker(\hat u)\).
Consequently, \(f(J_0(N))=\{0\}\), and hence \(f=0\).

We use the analytic representation of homomorphisms of complex abelian varieties \cite[Section~1.2]{BirkenhakeLange}.  Write
\[
        E'(\C)=V'/\Lambda',\qquad E(\C)=V/\Lambda,
\]
where \(V'\) and \(V\) are the one-dimensional complex universal-covering vector spaces and
\(\Lambda'=H_1(E'(\C),\Z)\), \(\Lambda=H_1(E(\C),\Z)\).  The isogeny \(\hat u\) is induced on universal covers by a nonzero complex-linear map \(\hat U:V'\to V\).  Since \(E'\) and \(E\) are elliptic curves, \(\hat U\) is an isomorphism of one-dimensional complex vector spaces, and \(\hat U(\Lambda')=\hat u_*\Lambda'\) is a finite-index sublattice of \(\Lambda\).

Let \(h'\) be a homomorphism from \(J_0(N)\) to \(E'\).  Choose a complex-linear lift \(H'\) of \(h'\) to universal covers.  Then \(\hat u\circ h'\) is lifted by \(\hat U\circ H'\).  For every homology class \(\lambda\in H_1(J_0(N)(\C),\Z)\), the vector \(H'(\lambda)\) lies in \(\Lambda'\).  Hence \((\hat U\circ H')(\lambda)\) lies in \(\hat u_*\Lambda'\), so \(\hat u\circ h'\) satisfies \eqref{eq:isogenous-lift-containment}.

Conversely, let \(h\) be a homomorphism from \(J_0(N)\) to \(E\) satisfying \eqref{eq:isogenous-lift-containment}.  Let \(H\) be the complex-linear lift of \(h\) to universal covers.  Define \(H'=\hat U^{-1}H\).  Condition \eqref{eq:isogenous-lift-containment} means that \(H(\lambda)\in\hat U(\Lambda')\) for every \(\lambda\in H_1(J_0(N)(\C),\Z)\).  Therefore \(H'(\lambda)\in\Lambda'\) for every such \(\lambda\).  Hence, \(H'\) descends to a homomorphism \(h'\) from \(J_0(N)\) to \(E'\).  By construction we have \(\hat u\circ h'=h\). It remains to check that \(h'\) is defined over \(\Q\).  For every
\(\sigma\in\operatorname{Gal}(\overline{\Q}/\Q)\),
\[
        \hat u\circ(\sigma h'-h')=\sigma h-h=0.
\]
The injectivity proved above gives \(\sigma h'=h'\).  Thus
\(h'\in\HomQ(J_0(N),E')\).

The forward and converse constructions prove the criterion.
\end{proof}

Lemma \ref{lem:isogenous-lift} turns the image problem for \(\hat u\circ -:\HomQ(J_0(N),E')\hookrightarrow \HomQ(J_0(N),E)\) into the lattice containment \eqref{eq:isogenous-lift-containment}.  Applied to the old coordinates, a rational coordinate vector \(\mathbf x=\mathbf b/\nu\) gives a map to \(E'\) exactly when \(\delta\mathbf x\) represents a homomorphism to \(E\) whose homology image is contained in \(\hat u_*H_1(E'(\C),\Z)\).

\begin{lemma}\label{lem:matrix-lift}
With the notation of Lemma \ref{lem:isogenous-lift}, put
\[
        \Gamma=H_1(J_0(N)(\C),\Z),\qquad
        \Lambda'=H_1(E'(\C),\Z),\qquad
        \Lambda=H_1(E(\C),\Z).
\]
Choose bases \(\gamma_1,\ldots,\gamma_r\) of \(\Gamma\), \(\lambda'_1,\lambda'_2\) of \(\Lambda'\), and \(\lambda_1,\lambda_2\) of \(\Lambda\).  Let \(U\in M_2(\Z)\) be the matrix representing \(\hat u_*:\Lambda'\to\Lambda\) with respect to the chosen bases of \(\Lambda'\) and \(\Lambda\).  Let \(h:J_0(N)\to E\) be a homomorphism over \(\Q\), and let \(B\in M_{2,r}(\Z)\) be the matrix representing \(h_*:\Gamma\to\Lambda\) with respect to the chosen bases of \(\Gamma\) and \(\Lambda\).
Then \(h=\hat u\circ h'\) for some \(h':J_0(N)\to E'\) if and only if \(UC=B\) has a solution \(C\in M_{2,r}(\Z)\).

Equivalently, the factorization \(B=U\circ C\), with \(C:\Gamma\to\Lambda'\) \(\Z\)-linear, is represented by the following commutative diagram.
\[
\begin{tikzcd}[column sep=large,row sep=large]
H_1(J_0(N)(\C),\Z) \arrow[rr, dashed, "C"] \arrow[dr, "B"'] &&
H_1(E'(\C),\Z) \arrow[dl, "U=\hat u_*"] \\
& H_1(E(\C),\Z)
\end{tikzcd}
\]
If \(P U Q=\operatorname{diag}(s_1,s_2)\) is a Smith normal form of \(U\), then \(UC=B\) has a solution in \(M_{2,r}(\Z)\) if and only if
\begin{equation}\label{eq:smith-divisibility}
        s_i\mid(PB)_{ij}\qquad (i=1,2,\ j=1,\ldots,r).
\end{equation}
\end{lemma}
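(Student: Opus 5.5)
The plan is to reduce Lemma~\ref{lem:matrix-lift} to Lemma~\ref{lem:isogenous-lift} and then do elementary linear algebra over \(\Z\).

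\textbf{Step 1: restate the containment in matrices.} By Lemma~\ref{lem:isogenous-lift}, \(h=\hat u\circ h'\) for some \(h'\in\HomQ(J_0(N),E')\) if and only if \(h_*\Gamma\subseteq\hat u_*\Lambda'\).

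\textbf{Step 2: containment implies a factorization.} Suppose \(h_*\Gamma\subseteq\hat u_*\Lambda'\). For each basis vector \(\gamma_j\), the column \(B e_j\) lies in \(U\Z^2\). Choose \(c_j\in\Z^2\) with \(Uc_j=Be_j\), and let \(C\) be the matrix with columns \(c_j\). Then \(UC=B\) with \(C\in M_{2,r}(\Z)\).

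\textbf{Step 3: a factorization implies containment.} If \(UC=B\) with \(C\) integral, then \(B\Z^r=U(C\Z^r)\subseteq U\Z^2\), which is exactly the containment. The uniqueness of \(h'\) is already given by the injectivity of \(\hat u\circ-\) in Lemma~\ref{lem:isogenous-lift}. It is also worth noting that \(U\) is nonsingular: \(\hat u_*\) has finite index, so \(\det U=\pm\deg\hat u\neq0\). Hence any solution \(C\) is uniquely determined as \(U^{-1}B\), and the question is only whether this rational matrix is integral.

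\textbf{Step 4: the Smith normal form criterion.} Let \(P,Q\in GL_2(\Z)\) with \(PUQ=S=\operatorname{diag}(s_1,s_2)\), where \(s_1,s_2\neq0\). Since \(P\) is invertible, \(UC=B\) is equivalent to \(PUQ\,(Q^{-1}C)=PB\). Set \(C'=Q^{-1}C\); because \(Q\in GL_2(\Z)\), \(C\) is integral if and only if \(C'\) is. The system then becomes \(SC'=PB\), that is, \(s_iC'_{ij}=(PB)_{ij}\) for all \(i,j\). Since \(s_i\neq0\), this has the unique rational solution \(C'_{ij}=(PB)_{ij}/s_i\). That solution is integral exactly when \(s_i\mid(PB)_{ij}\) for all \(i,j\), which is \eqref{eq:smith-divisibility}.

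\textbf{Expected obstacle.} No step is genuinely hard. The only points needing care are that \(U\) has full rank, so that the \(s_i\) are nonzero, and that \(P\) and \(Q\) are unimodular, so that integrality is preserved in both directions.
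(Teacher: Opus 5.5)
Your proof is correct and follows essentially the same route as the paper: it reduces to the homology containment of Lemma~\ref{lem:isogenous-lift}, then settles integral solvability of $UC=B$ via the Smith normal form, using unimodular $P,Q$ and nonzero $s_i$. The only difference is cosmetic. For the forward direction the paper takes $C$ directly to be the matrix of $h'_*$, whereas you pass through the containment $h_*\Gamma\subseteq\hat u_*\Lambda'$ and choose integral preimages column by column; both arguments are valid.
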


\begin{proof}
Suppose first that \(h=\hat u\circ h'\) for some homomorphism \(h'\colon J_0(N)\to E'\) over \(\Q\).  Let \(C=(c_{kj})\) be the matrix of \(h'_*\), so \(h'_*(\gamma_j)=c_{1j}\lambda'_1+c_{2j}\lambda'_2\), and let \(C_j\) denote the \(j\)th column of \(C\).  Applying \(\hat u_*\), the coordinate vector of \((\hat u_*\circ h'_*)(\gamma_j)\) in \(\Lambda\) is \(UC_j\).  Since \(\hat u\circ h'=h\), the vector \(UC_j\) is also the \(j\)th column $B_j$ of \(B\).  Thus \(UC_j=B_j\) for every \(j\), which is exactly \(UC=B\).

Conversely, suppose that \(UC=B\) has a solution \(C\in M_{2,r}(\Z)\).  For each basis element \(\gamma_j\), the equality \(UC_j=B_j\) means that \(h_*(\gamma_j)=\hat u_*(c_{1j}\lambda'_1+c_{2j}\lambda'_2)\).  Therefore \(h_*(\Gamma)\subseteq \hat u_*\Lambda'\). By Lemma \ref{lem:isogenous-lift}, there exists
\(h'\in\HomQ(J_0(N),E')\) satisfying \(\hat u\circ h'=h\).  This proves the
equivalence between lifting and integral solvability of \(UC=B\).

For the Smith normal form criterion,
put \(\Delta_U=\operatorname{diag}(s_1,s_2)\), so \(P U Q=\Delta_U\)
with \(P,Q\in\operatorname{GL}_2(\Z)\). The image \(\hat u_*\Lambda'\) has finite index in \(\Lambda\), so \(U\) has rank \(2\).  We choose the Smith invariant factors \(s_1,s_2\) to be positive. Thus the equation \(UC=B\) is equivalent to
\[
\begin{gathered}
        \Delta_U(Q^{-1}C)=PB.
\end{gathered}
\]
Set \(C'=Q^{-1}C\). Since \(Q\in\operatorname{GL}_2(\Z)\), \(C\) has integer entries if and only if \(C'\) has integer entries. Thus \(UC=B\) has a solution \(C\in M_{2,r}(\Z)\) if and only if
\(\Delta_UC'=PB\) has a solution \(C'\in M_{2,r}(\Z)\). Due to the fact that \(\Delta_U\) is diagonal, its unique rational solution \(C'=\Delta_U^{-1}PB\) is integral if and only if \eqref{eq:smith-divisibility} holds.
\end{proof}

Lemma \ref{lem:matrix-lift} characterizes the elements of \(L_{E',N}\)
by the Smith normal form criterion \eqref{eq:smith-divisibility}. To obtain a
\(\Z\)-basis of \(L_{E',N}\), we use the column Hermite normal form. Cohen
\cite[Section~2.4]{cohen1} treats both normal forms for
\(\Z\)-modules.  For a matrix \(G\in M_{n,m}(\Z)\) of rank \(n\), its column Hermite
normal form is the unique lower-triangular matrix
\(H=(h_{ij})\in M_n(\Z)\) satisfying
\begin{equation}\label{eq:column-hermite-normal-form}
G\Z^m=H\Z^n,\qquad
h_{ii}>0,\qquad 0\le h_{ij}<h_{ii}\quad (j<i).
\end{equation}

\begin{lemma}\label{lem:hermite-lattice-basis}
Let \(L\subset\Q^n\) be a rank-\(n\) lattice, choose \(q>0\) such that
\(qL\subset\Z^n\), and let \(G\in M_{n,m}(\Z)\) satisfy
\(G\Z^m=qL\).  If \(\mathbf h_1,\ldots,\mathbf h_n\) are the columns of the
column Hermite normal form \(H\) of \(G\), then
\(\mathbf h_1/q,\ldots,\mathbf h_n/q\) form a \(\Z\)-basis of \(L\).
\end{lemma}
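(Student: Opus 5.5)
The plan is to reduce the statement to the defining property \eqref{eq:column-hermite-normal-form} of the column Hermite normal form, and then undo the scaling by \(q\).

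First, since \(L\) has rank \(n\) and \(q>0\), the lattice \(qL\) is a rank-\(n\) subgroup of \(\Z^n\). It therefore has finite index in \(\Z^n\). The hypothesis \(G\Z^m=qL\) then shows that \(G\) has rank \(n\), so its column Hermite normal form \(H\in M_n(\Z)\) exists and is unique. By \eqref{eq:column-hermite-normal-form} we have \(H\Z^n=G\Z^m=qL\).

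Next, we check that the columns \(\mathbf h_1,\ldots,\mathbf h_n\) of \(H\) form a \(\Z\)-basis of \(qL\).
\begin{itemize}
\item \emph{Spanning.} The identity \(H\Z^n=qL\) says exactly that the \(\mathbf h_i\) span \(qL\) over \(\Z\).
\item \emph{Linear independence.} The matrix \(H\) is lower triangular with diagonal entries \(h_{ii}>0\). Hence \(\det H=\prod_i h_{ii}\neq 0\), so the columns are linearly independent over \(\Q\), and a fortiori over \(\Z\).
\end{itemize}

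Finally, multiplication by \(1/q\) is a group isomorphism \(qL\to L\). It sends a \(\Z\)-basis to a \(\Z\)-basis, so \(\mathbf h_1/q,\ldots,\mathbf h_n/q\) form a \(\Z\)-basis of \(L\).

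The only point needing care is that the rank hypothesis required for the Hermite normal form to exist is satisfied. This follows from \(\operatorname{rank}(qL)=n\); everything else is formal.
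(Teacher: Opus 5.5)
Your proof is correct and follows the paper's argument essentially verbatim. The identity \(H\Z^n=G\Z^m=qL\) gives spanning, nonsingularity of \(H\) (from its positive diagonal) gives independence, and scaling by \(1/q\) transfers the basis to \(L\); your extra check that \(G\) has rank \(n\), so that \(H\) exists, is a detail the paper leaves implicit.
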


\begin{proof}
By \eqref{eq:column-hermite-normal-form}, the columns of \(H\) generate
\(qL\).  Since \(H\) is nonsingular, the columns are linearly independent and
hence form a \(\Z\)-basis of \(qL\).  Dividing by \(q\) gives the stated basis
of \(L\).
\end{proof}

Thus Smith normal form supplies the lifting conditions, whereas
Lemma \ref{lem:hermite-lattice-basis} converts the solution lattice of
\eqref{eq:smith-divisibility} into the
basis used in Proposition \ref{prop:candidate-criteria}.  Hermite reduction
introduces no additional lifting condition.

\subsection{Old-coordinate lattices}\label{subsec:candidate-exclusion}

To construct \(Q_{E',N}\), we first restrict the possible strong Weil
pairs \((M,E)\) using the modular degree of \(\pi_E\). For each remaining
\(E'\sim_\Q E\), the old form \(Q_{E,N}\) gives a further necessary
representation condition before \(L_{E',N}\) is computed. A \(\Z\)-basis
of \(L_{E',N}\) then converts the degree pairing into the integral form
\(Q_{E',N}\).

\begin{theorem}[{\cite{JeonKwon2026StrongWeil}}]\label{thm:divisibility}
Let \(E/\Q\) be the strong Weil curve of conductor \(M\), and let
\(\pi_E:X_0(M)\to E\) be the strong Weil parametrization. Suppose that \(M\mid N\) and that \(g:X_0(N)\to E'\) is a nonconstant rational map to an elliptic curve
isogenous to \(E\).  Then
\begin{equation}\label{eq:modular-degree-coarse}
        \deg(\pi_E)\mid c_E^{\Omega(N/M)}\deg(g).
\end{equation}
Here $\Omega(N/M)$ is the number of prime factors of $N/M$ counted with multiplicity.
More sharply, put
\[
\mathcal D_c(N/M)=
\begin{cases}
1,&N=M,\\[2mm]
\displaystyle\prod_{\ell\mid c}
\ell^{v_\ell(c)+\lfloor v_\ell(N/M)/2\rfloor},&N>M.
\end{cases}
\]
Then \(\deg(\pi_E)\mid\mathcal D_{c_E}(N/M)\deg(g)\).
\end{theorem}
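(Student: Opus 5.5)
Reduce any map to \(E'\) to a homomorphism \(J_0(N)\to E\), write it in old coordinates, and extract the divisibility from the degree equation by bounding the denominators of those coordinates.

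\emph{Reduction to \(E\) and homomorphisms.} Translating on \(E'\), assume \(g(\infty)=0\), so \(g=G\circ j_N\) with \(G\in\HomQ(J_0(N),E')\). Since \(\HomQ(E',E)\) is generated by \(\hat u\), and \(\deg(\hat u\circ g)=\delta\deg(g)\), it suffices to control \(h=\hat u\circ G\). By Theorem \ref{thm:old-span}, \(h=\Phi_{\mathbf y}\) for some \(\mathbf y\in\Q^n\).

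\emph{Key identity.} By \eqref{eq:old-degree-matrix} with \(A_{E,N}\) integral, the map \(\Phi_{\mathbf y}\circ j_N\) has degree \(\deg(\pi_E)\,\mathbf y^TA_{E,N}\mathbf y\). If \(\mathbf y\in\Z^n\), then \(\deg(\pi_E)\) divides this degree. In general, write \(\mathbf y=\mathbf b/\nu\) with \(\nu\) minimal; then
\[
\nu^2\deg(h\circ j_N)=\deg(\pi_E)\,\mathbf b^TA_{E,N}\mathbf b .
\]
Thus everything reduces to bounding \(\nu\). Concretely, we want
\[
\nu^2\mid \mathcal D_{c_E}(N/M)\cdot(\text{a factor absorbed by }\delta).
\]

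\emph{Bounding denominators.} I would use pullbacks of differentials. The pullback \(h^*\omega_E\) is an integral differential on \(X_0(N)\) (in the Néron sense over \(\Z[1/N]\) and at \(\infty\)), so its \(q\)-expansion lies in \(\Z[[q]]\). On the other hand,
\[
h^*\omega_E=\sum_d y_d\,c_E\,f_E(q^d)\,\frac{dq}{q}.
\]
Integrality of \(\sum_d y_d c_E f_E(q^d)\) forces \(c_E y_d\) to be essentially integral. A Möbius-type inversion with the \(S_E\) coefficients handles the prime-power structure of \(N/M\). For \(\ell\mid N/M\), inverting the Hecke relation at \(\ell\) (where \(a_\ell\) and \(\ell\) interact) is what produces at most \(\lfloor v_\ell(N/M)/2\rfloor\) extra factors of \(\ell\) beyond \(v_\ell(c_E)\) in \(\nu^2\). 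For primes \(\ell\nmid c_E\), I would instead argue using the homology lattice of Lemma \ref{lem:isogenous-lift}, showing that the saturation of \(\langle\Phi_d\rangle_\Z\) in \(\HomQ(J_0(N),E)\) has index prime to \(\ell\). The coarse bound \eqref{eq:modular-degree-coarse} then follows since \(\mathcal D_{c_E}(N/M)\mid c_E^{\Omega(N/M)}\).

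\emph{Main obstacle.} The hardest step is the exact local denominator bound at each \(\ell\): showing that the saturation index is controlled by \(c_E\) and \(\lfloor v_\ell(N/M)/2\rfloor\), and that it pairs correctly with the \(\delta\) factor so that \(\deg(\pi_E)\) divides \(\mathcal D\deg(g)\) rather than \(\mathcal D\delta\deg(g)\). To remove \(\delta\), I would first try to run the \(q\)-expansion argument directly for \(G\) on \(E'\). This uses \(u^*\omega_{E'}=c_u\omega_E\), which shows that \(E'\)-integrality gives integrality of \(c_E\mathbf y\) with no loss.
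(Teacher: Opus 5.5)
\textbf{The real gap is that your key identity is quadratic.} From $\nu^2\deg(h\circ j_N)=\deg(\pi_E)\,\mathbf b^TA_{E,N}\mathbf b$ you can only conclude $\deg(\pi_E)\mid\nu^2\delta\deg(g)$. Take the best case $\nu=1$, for example $c_E=1$: there the inclusion $\mathcal D_{c_E}(N/M)\,\HomQ(J_0(N),E)\subseteq\bigoplus_d\Z\Phi_d$, quoted in the proof of Corollary~\ref{cor:old-form-obstruction}, gives $\mathbf y\in\Z^n$. Even then you only get $\deg(\pi_E)\mid\delta\deg(g)$, and $\delta$ can share factors with $\deg(\pi_E)$. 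In general the available bound is $\nu\mid\mathcal D_{c_E}(N/M)$, not $\nu^2\mid\mathcal D_{c_E}(N/M)$.

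Your proposed fix does not remove the problem. The $q$-expansion of $G^*\omega_{E'}$ only re-proves integrality of $c_E\mathbf y$. If you instead run the identity with the coordinates $\mathbf x=\mathbf y/\delta$ of $G$, you get $\deg(\pi_E)\mid\nu_{\mathbf x}^2\deg(g)$. Here $\nu_{\mathbf x}>1$ occurs even when $c_E=1$: for instance $\mathbf x=(\tfrac12,\tfrac12)$ for $N=160$, $E'=E/\Sigma_E$ in Proposition~\ref{prop:quad_forms}.

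\textbf{The missing idea is to use the degree pairing bilinearly.} Pair $G$ with the integral combination $\nu h=\Phi_{\mathbf b}$, rather than pairing $h$ with itself.
\begin{itemize}
\item Since $h^\vee=G^\vee\circ u$, you have $G\circ\Phi_{\mathbf b}^\vee=\nu\,(G\circ G^\vee)\circ u=\nu\deg(g)\,u$.
\item On the other hand, $\Phi_d^\vee=\iota_{d,N,M}^*\circ\Phi_E^\vee$.
\item The map $G\circ\iota_{d,N,M}^*\colon J_0(M)\to E'$ kills the abelian variety $\ker\Phi_E$, which has no isogeny factor in the class of $E$. So $G\circ\iota^*_{d,N,M}=\psi_d\circ\Phi_E$ with $\psi_d\in\HomQ(E,E')=\Z u$.
\item Hence $G\circ\Phi_d^\vee=\deg(\pi_E)\,\psi_d$.
\item Summing over $d$ with weights $b_d$ gives $\nu\deg(g)\,u\in\deg(\pi_E)\Z u$, that is, $\deg(\pi_E)\mid\nu\deg(g)$.
\end{itemize}
This has one power of $\nu$ and no $\delta$. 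The theorem therefore reduces exactly to the denominator bound $\nu\mid\mathcal D_{c_E}(N/M)$.

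\textbf{Your sketch does not establish that denominator bound.}
\begin{itemize}
\item The pullback is $(\Phi_d\circ j_N)^*\omega_E=c_E\,d\,f_E(q^d)\,dq/q$, because $\tau\mapsto d\tau$ contributes a factor $d$.
\item So integrality at $\infty$, applied inductively over $d\mid N/M$, only gives $c_E\,d\,y_d\in\Z$. For example, $\Phi_\ell/\ell$ passes this test.
\item Apply the same integrality to $h\circ w_N$, whose coordinates are $\pm y_{(N/M)/d}$. This also gives $c_E\,(N/(Md))\,y_d\in\Z$.
\item Together these bound the $\ell$-part of $\nu$ by $\ell^{v_\ell(c_E)+\lfloor v_\ell(N/M)/2\rfloor}$. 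This symmetry between the cusps $\infty$ and $0$, not a Hecke inversion, is a natural source of the exponent $\lfloor v_\ell(N/M)/2\rfloor$.
\end{itemize}
However, for $\ell\nmid c_E$ with $\ell\mid N/M$, this still leaves a possible factor $\ell^{\lfloor v_\ell(N/M)/2\rfloor}$, which the statement rules out. Your plan to use the homology lattice to show the saturation index is prime to $\ell$ only restates what has to be proved; it gives no mechanism.
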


We call the sharper necessary condition the \emph{modular-degree
criterion}.  For a degree \(D\) search, it retains only the pairs satisfying
\(\deg(\pi_E)\mid\mathcal D_{c_E}(N/M)D\).  It does not require a general
denominator bound.

\begin{remark}\label{manin_constant_remark}
When \(c_E=1\), the modular-degree criterion becomes
\(\deg(\pi_E)\mid D\).  Cremona's computations of optimality and Manin
constants \cite{Cremona:ECData,AgasheRibetStein2006} show that \(c_E=1\) for
every \(\Gamma_0(M)\)-optimal elliptic curve \(E/\Q\) of conductor \(M<400000\)
in Cremona's tables.  

A direct check of Ogg's inequality shows that, for
\(D=4166\), the largest level not excluded by the inequality is \(N=399913\),
whereas for \(D=4167\) the prime level \(N=400009\) is not excluded. Thus
\(D=4167\) is the first value for which Ogg's inequality allows a level
\(N>400000\).  Consequently, for \(D\le4166\), every elliptic curve appearing in
our computation has conductor \(M<400000\), and hence satisfies \(c_E=1\).
\end{remark}

The modular-degree criterion depends only on the pair \((M,E)\) and
does not use the chosen curve \(E'\).  Once \(E'\) and \(u\) are fixed, the
old-coordinate lattice for \(E\) yields a second exclusion test before
\(L_{E',N}\) is constructed.  The test is an integral representation problem
for \(Q_{E,N}\).

\begin{corollary}\label{cor:old-form-obstruction}
Put \(q=\mathcal D_{c_E}(N/M)\).  If a degree \(D\) morphism
\(X_0(N)\to E'\) over \(\Q\) exists, then
\begin{equation}\label{eq:old-form-obstruction}
        Q_{E,N}(\mathbf z)=\delta q^2D
        \qquad\text{for some }\mathbf z\in\Z^n.
\end{equation}
More precisely, if the associated homomorphism has reduced old
coordinate vector \(\mathbf x=\mathbf b/\nu\in L_{E',N}\), then
\(\nu\mid\delta q\) and one may take
\(\mathbf z=\delta q\mathbf x\).  In particular, when \(c_E=1\), the equation
\eqref{eq:old-form-obstruction} becomes
\(Q_{E,N}(\mathbf z)=\delta D\).
\end{corollary}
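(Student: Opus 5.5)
Let \(g:X_0(N)\to E'\) be a degree \(D\) morphism over \(\Q\). I will first replace it by a translate with \(g(\infty)=0\); translation by a rational point does not change the degree. Then \(g=G\circ j_N\) for a unique \(G\in\HomQ(J_0(N),E')\). By Theorem \ref{thm:old-span}, \(G=u\circ\Phi_{\mathbf x}\) for a unique \(\mathbf x\in\Q^n\), and by definition \(\mathbf x\in L_{E',N}\). Write \(\mathbf x=\mathbf b/\nu\) in lowest terms, meaning \(\nu>0\) and \(\gcd(\nu,b_d : d\mid N/M)=1\).

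Proposition \ref{prop:degree-equation} gives \(\delta Q_{E,N}(\mathbf b)=D\nu^2\). Put \(\mathbf z=\delta q\mathbf x=\delta q\mathbf b/\nu\). Homogeneity of the quadratic form gives
\[
Q_{E,N}(\mathbf z)=\frac{\delta^2q^2}{\nu^2}\,Q_{E,N}(\mathbf b)=\frac{\delta^2q^2}{\nu^2}\cdot\frac{D\nu^2}{\delta}=\delta q^2D .
\]
So the whole statement reduces to the divisibility \(\nu\mid\delta q\). That divisibility forces \(\mathbf z\in\Z^n\), because \(\mathbf b\) is integral.

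The divisibility \(\nu\mid\delta q\) is the main step. First, \(\hat u\circ G=\Phi_{\delta\mathbf x}\) is an actual homomorphism \(J_0(N)\to E\). So it suffices to prove the following claim: if \(\Phi_{\mathbf y}\in\HomQ(J_0(N),E)\) for some \(\mathbf y\in\Q^n\), then \(q\mathbf y\in\Z^n\). In other words, the lattice \(L_{E,N}\) lies in \(q^{-1}\Z^n\). Applying this to \(\mathbf y=\delta\mathbf b/\nu\) gives \(\nu\mid\delta q b_d\) for all \(d\). Since the \(b_d\) are jointly coprime to \(\nu\), this yields \(\nu\mid\delta q\).

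To prove the claim, I would use the same input that gives Theorem \ref{thm:divisibility} in \cite{JeonKwon2026StrongWeil}, namely a denominator bound for old coordinates. First I would try to dualize: compose \(\Phi_{\mathbf y}\) with the dual maps \(\Phi_e^\vee:E\to J_0(N)\). This gives endomorphisms of \(E\), which are integers, and they are computed by the pairing matrix \(\deg(\pi_E)A_{E,N}\mathbf y\). This shows that \(\deg(\pi_E)A_{E,N}\mathbf y\) is integral. That bound is too weak in general, however. So at this point I would instead invoke the sharper structural statement underlying Theorem \ref{thm:divisibility}: the saturation of \(\langle\Phi_d\rangle_\Z\) in \(\HomQ(J_0(N),E)\) has index exponent dividing \(\mathcal D_{c_E}(N/M)\). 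I would argue this prime by prime, for primes \(\ell\mid c_E\), using the \(q\)-expansion principle. The differential \(\Phi_{\mathbf y}^*\omega_E\) has \(q\)-expansion \(c_E\sum_d y_d\, d\, f_E(q^d)\,dq/q\), and it must have integral coefficients. Here I expect the factor \(c_E\) and the \(\lfloor v_\ell(N/M)/2\rfloor\) term to enter. When \(c_E=1\) we have \(q=1\), so the claim says exactly that \(\{\Phi_d\}\) is saturated, and the statement becomes \(Q_{E,N}(\mathbf z)=\delta D\) with \(\mathbf z=\delta\mathbf x\).
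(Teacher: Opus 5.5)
Your argument is essentially the paper's proof. Both pass to $\hat u\circ G=\Phi_{\delta\mathbf x}$ and invoke the inclusion $q\,\HomQ(J_0(N),E)\subseteq\bigoplus_{d\mid N/M}\Z\Phi_d$ from \cite{JeonKwon2026StrongWeil}, which gives $\delta q\mathbf x\in\Z^n$ and hence $\nu\mid\delta q$. Both then get $Q_{E,N}(\mathbf z)=\delta q^2D$ from the degree equation and homogeneity.

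The inclusion should simply be cited, as the paper does. Your sketch using the $q$-expansion of $\Phi_{\mathbf y}^*\omega_E$ at $\infty$ alone would not prove it: for $N=Mp$ and $c_E=1$ it only yields $y_1\in\Z$ and $p\,y_p\in\Z$. Working only at primes $\ell\mid c_E$ would also leave saturation at the remaining primes unaddressed.
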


\begin{proof}
After translating the morphism so that the cusp \(\infty\) maps to
zero, let \(G=u\circ\Phi_{\mathbf x}:J_0(N)\to E'\) be the associated
homomorphism.  Since
\(\hat u\circ G=\Phi_{\delta\mathbf x}\), we apply to \(E\) the lattice
inclusion proved in \cite{JeonKwon2026StrongWeil}:
\begin{equation}\label{eq:source-old-lattice-inclusion}
q\,\HomQ(J_0(N),E)
\subseteq
\bigoplus_{d\mid N/M}\Z\Phi_d .
\end{equation}
Indeed, \(q\Phi_{\delta\mathbf x}=\Phi_{\delta q\mathbf x}\), so
\eqref{eq:source-old-lattice-inclusion} and the uniqueness of rational old
coordinates give \(\delta q\mathbf x\in\Z^n\).  If
\(\mathbf x=\mathbf b/\nu\) is reduced, this integrality implies
\(\nu\mid\delta q\).  Put \(\mathbf z=\delta q\mathbf x\).  Proposition
\ref{prop:degree-equation} gives \(D=\delta Q_{E,N}(\mathbf x)\), and hence
\[
        Q_{E,N}(\mathbf z)
        =\delta^2q^2Q_{E,N}(\mathbf x)
        =\delta q^2D.
\]

\end{proof}

If \(Q_{E,N}\) does not represent \(\delta q^2D\),
Corollary \ref{cor:old-form-obstruction} excludes \(E'\) without constructing
\(L_{E',N}\).  If \(Q_{E,N}\) does represent \(\delta q^2D\), a representing
vector \(\mathbf z\) need not satisfy
\(\mathbf z/(\delta q)\in L_{E',N}\).  Lemmas
\ref{lem:isogenous-lift} and \ref{lem:matrix-lift} therefore determine
\(L_{E',N}\), and a \(\Z\)-basis of this lattice converts \(Q_{E,N}\) into
the integral quadratic form of Proposition
\ref{prop:candidate-criteria}.

\begin{proposition}\label{prop:candidate-criteria}
Let \(\mathbf v_1,\ldots,\mathbf v_n\) be a \(\Z\)-basis of
\(L_{E',N}\).  With respect to \((\mathbf v_i)_{i=1}^n\), define
\begin{equation}\label{eq:target-degree-form}
Q_{E',N}(z_1,\ldots,z_n)
=\deg(u)\,Q_{E,N}\!\left(\sum_{i=1}^n z_i\mathbf v_i\right)
\end{equation}
Then \(Q_{E',N}\) is an integral positive-definite quadratic form.  For every positive integer
\(D\), a degree \(D\) morphism \(X_0(N)\to E'\) over \(\Q\) exists if and
only if \(Q_{E',N}\) represents \(D\) over \(\Z\).
\end{proposition}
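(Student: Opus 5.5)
The proof has three parts: identify $Q_{E',N}$ with the actual degree function on $\HomQ(J_0(N),E')$, deduce integrality and positive-definiteness from that identification, and then match degrees of homomorphisms with degrees of morphisms $X_0(N)\to E'$.

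\emph{Step 1: $Q_{E',N}$ computes degrees.} Let $\mathbf z\in\Z^n$ and put $\mathbf x=\sum_i z_i\mathbf v_i$. By definition of $L_{E',N}$, the map $G=u\circ\Phi_{\mathbf x}$ is an actual element of $\HomQ(J_0(N),E')$. As in the proof of Proposition \ref{prop:degree-equation}, we have $\hat u\circ G=\Phi_{\delta\mathbf x}$. This is an actual homomorphism to $E$, and $\delta\mathbf x$ is its rational old coordinate vector. I take for granted that Definition \ref{def:old-degree-form} gives $\deg(\Phi_{\mathbf y}\circ j_N)=Q_{E,N}(\mathbf y)$ whenever $\Phi_{\mathbf y}$ is an actual homomorphism, not only for $\mathbf y\in\Z^n$. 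This holds because the degree pairing of \cite{DerickxOrlic2024} is a quadratic form on the full group $\HomQ(J_0(N),E)$, and the rational extension is unique on the $\Q$-span.

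With this, multiplicativity of degrees gives
\[
\delta\,\deg(G\circ j_N)=\deg(\hat u\circ G\circ j_N)=Q_{E,N}(\delta\mathbf x)=\delta^2Q_{E,N}(\mathbf x).
\]
Hence $\deg(G\circ j_N)=\delta\,Q_{E,N}(\mathbf x)=Q_{E',N}(\mathbf z)$, with the convention that the degree is $0$ when $G=0$.

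\emph{Step 2: integrality and positive-definiteness.} By Step 1, $Q_{E',N}$ takes nonnegative integer values on $\Z^n$. Its associated bilinear form is obtained by polarization, and composite degrees are integers. To get an integral form in the stronger sense I would appeal to the fact that the degree pairing of \cite{DerickxOrlic2024} is itself integer-valued (it is $\deg$ of a composite with a dual). Positive-definiteness follows from Theorem \ref{thm:do-degree-pairing}, since $\sum z_i\mathbf v_i$ is an invertible $\Q$-linear change of variables and $\delta>0$.

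\emph{Step 3: the equivalence with morphisms.} Suppose $f:X_0(N)\to E'$ is a morphism of degree $D$. Compose with a translation so that $f(\infty)=0$; this does not change the degree. By the universal property, $f=G\circ j_N$ for a unique $G\in\HomQ(J_0(N),E')$. By Theorem \ref{thm:old-span}, $G=u\circ\Phi_{\mathbf x}$ for some $\mathbf x\in\Q^n$. This $\mathbf x$ lies in $L_{E',N}$, so $\mathbf x=\sum z_i\mathbf v_i$ with $\mathbf z\in\Z^n$, and Step 1 gives $Q_{E',N}(\mathbf z)=D$.

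Conversely, suppose $Q_{E',N}(\mathbf z)=D>0$. Then $G\circ j_N$ is a morphism over $\Q$ of degree $D$, and it is nonconstant because its degree is $D>0$.

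\emph{Main obstacle.} The delicate point is in Step 1: justifying that the quadratic form $Q_{E,N}$, defined by extending from $\Z^n$, still computes degrees at rational points of the lattice $L_{E,N}\supseteq\Z^n$. I would handle this by scaling. For an actual homomorphism $\Phi_{\mathbf y}$, choose $m$ with $m\mathbf y\in\Z^n$. Then $\deg([m]\circ\Phi_{\mathbf y}\circ j_N)=m^2\deg(\Phi_{\mathbf y}\circ j_N)$, and the left side equals $Q_{E,N}(m\mathbf y)=m^2Q_{E,N}(\mathbf y)$ by homogeneity, which gives the claim.
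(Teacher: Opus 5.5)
Your proof is correct and follows the paper's argument essentially step for step. You identify $Q_{E',N}(\mathbf z)$ with $\deg(G\circ j_N)$ for $G=u\circ\Phi_{\mathbf x}$, read off integrality and positive-definiteness from that, and get the converse from translation plus the universal property of the Jacobian. Your scaling argument via $[m]$ and $\hat u$ spells out the step the paper summarizes as ``the degree pairing gives'': that $Q_{E,N}$ computes degrees at non-integral old-coordinate vectors which define actual homomorphisms.
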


\begin{proof}

Write \(\mathbf z=(z_i)_{i=1}^n\in\Z^n\) and put
\(\mathbf x=\sum_{i=1}^n z_i\mathbf v_i\).  By the definition of the chosen
\(\Z\)-basis, \(\mathbf z\) corresponds to the homomorphism
\(G=u\circ\Phi_{\mathbf x}:J_0(N)\to E'\), and the degree pairing gives
\begin{equation}\label{eq:target-degree-identity}
\deg(G\circ j_N)=\deg(u)Q_{E,N}(\mathbf x)=Q_{E',N}(\mathbf z).
\end{equation}
For every \(\mathbf z\in\Z^n\), the morphism \(G\circ j_N\) has
degree in \(\Z_{\geq 0}\).  Equation \eqref{eq:target-degree-identity}
therefore gives \(Q_{E',N}(\mathbf z)\in\Z\) for every
\(\mathbf z\in\Z^n\), so \eqref{eq:target-degree-form} defines an integral
quadratic form.
The form \(Q_{E',N}\) is positive
definite because \(Q_{E,N}\) is positive definite and the basis matrix is
invertible over \(\Q\).

Suppose first that \(Q_{E',N}(\mathbf z)=D\) for some
\(\mathbf z\in\Z^n\).  The homomorphism
\[G=u\circ\Phi_{\sum_i z_i\mathbf v_i}:J_0(N)\to E'\] then gives a degree \(D\) morphism
\(G\circ j_N:X_0(N)\to E'\) by \eqref{eq:target-degree-identity}.

Conversely, let \(f:X_0(N)\to E'\) be a degree \(D\) morphism over
\(\Q\).  Since the cusp \(\infty\) is rational, translation by
\(-f(\infty)\) is defined over \(\Q\).  Translation by \(-f(\infty)\)
preserves degree and produces a
morphism \(f_0\) satisfying \(f_0(\infty)=0\).  By the universal property of
the Jacobian, there is a unique homomorphism \(G:J_0(N)\to E'\) such that
\(f_0=G\circ j_N\).  Writing \(G\) in the basis \((\mathbf v_i)_{i=1}^n\) gives a vector
\(\mathbf z\in\Z^n\), and \eqref{eq:target-degree-identity} yields
\(Q_{E',N}(\mathbf z)=D\).
\end{proof}

\begin{corollary}\label{cor:fixed-target-old-basis}
If \(c_Ec_u=1\), then \(L_{E',N}=\Z^n\).  Hence a degree \(D\) morphism to
\(E'\) exists if and only if the integral form \(\deg(u)Q_{E,N}\) represents
\(D\) over $\Z$.
\end{corollary}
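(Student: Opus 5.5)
The plan is to prove the two inclusions $\Z^n\subseteq L_{E',N}$ and $L_{E',N}\subseteq\Z^n$, and then apply Proposition~\ref{prop:candidate-criteria} with the standard basis. Since $c_E$ and $c_u$ are positive (rational, resp.\ integral) numbers, the hypothesis $c_Ec_u=1$ would be used as giving $c_E=1$ and $c_u=1$. At least $c_u\ge1$ is an integer. The key input will be the integrality statement for the old coordinates. I will first check that $c_E c_u=1$ forces both factors to equal $1$; if $c_E$ could be a fraction, I would instead use the known integrality of Manin constants.

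The inclusion $\Z^n\subseteq L_{E',N}$ is immediate. For $\mathbf x\in\Z^n$, each $\Phi_d$ is an honest homomorphism over $\Q$, so $u\circ\Phi_{\mathbf x}$ is a homomorphism $J_0(N)\to E'$.

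For the reverse inclusion, I will use the factorisation of $\pi_{E'}:=u\circ\pi_E$ at the level of differentials. We have
\[
(u\circ\pi_E)^*\omega_{E'}=c_u\,c_E\,2\pi i f_E(z)\,dz .
\]
With $c_Ec_u=1$, this says that $E'$ together with $u\circ\Phi_E$ has Manin constant $1$. Now let $\mathbf x\in L_{E',N}$ and set $G=u\circ\Phi_{\mathbf x}$. I will pull back $\omega_{E'}$ along $G\circ j_N$ and compute the $q$-expansion:
\[
(G\circ j_N)^*\omega_{E'}=c_uc_E\sum_d x_d\, f_E(q^d)\,2\pi i\,dz=\sum_d x_d\, f_E(q^d)\,\frac{dq}{q}.
\]
The differential $\omega_{E'}$ is a N\'eron differential, and $G$ is defined over $\Q$. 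The pullback $(G\circ j_N)^*\omega_{E'}$ is therefore a differential on $X_0(N)$ that extends over the minimal regular model over $\Z$. Equivalently, it is an integral-coefficient combination in the sense of $\Z[[q]]$, which is Edixhoven/Mazur-style $q$-expansion integrality at the cusp $\infty$. So $\sum_d x_d f_E(q^d)\in\Z[[q]]$.

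Since $f_E=q+\cdots$, I will extract the $x_d$ inductively in increasing order of $d$. The coefficient of $q^d$ equals $x_d$ plus an integer combination of the $x_{d'}$ with $d'<d$, $d'\mid d$. Induction then gives $\mathbf x\in\Z^n$.

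The main obstacle is the $q$-expansion integrality step. One has to show that a morphism $X_0(N)\to E'$ over $\Q$ pulls back the N\'eron differential to a form with integral $q$-expansion at $\infty$. For this I would invoke the integrality of pullbacks of N\'eron differentials via the N\'eron mapping property and the $q$-expansion principle on $X_0(N)_{\Z}$ near $\infty$, at least away from $2$ in the usual formulation. Alternatively, I would cite the lattice inclusion \eqref{eq:source-old-lattice-inclusion} from \cite{JeonKwon2026StrongWeil}, adapted to $E'$ with constant $c_Ec_u$ in place of $c_E$. Since $\mathcal D_1=1$, this yields $L_{E',N}\subseteq\Z^n$ directly.

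Once $L_{E',N}=\Z^n$ is established, I take $\mathbf v_i$ to be the standard basis in Proposition~\ref{prop:candidate-criteria}. Then $Q_{E',N}=\deg(u)Q_{E,N}$, and the representation criterion follows.
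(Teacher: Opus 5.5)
Your main argument has a genuine gap in the $q$-expansion step, where a factor of $d$ is missing. The degeneracy map $\iota_{d,N,M}$ is $\tau\mapsto d\tau$, so it sends $q\mapsto q^d$ and $dq/q\mapsto d\,dq/q$. Hence $(\Phi_d\circ j_N)^*\omega_E=\iota_{d,N,M}^*\pi_E^*\omega_E=c_E\,d\,f_E(q^d)\,dq/q$, and
\[
(G\circ j_N)^*\omega_{E'}=c_uc_E\sum_{d\mid N/M}d\,x_d\,f_E(q^d)\,\frac{dq}{q}.
\]
The coefficient of $q^d$ is $d\,x_d+\sum_{d'\mid d,\ d'<d}d'a_{d/d'}x_{d'}$, so your induction yields only $d\,x_d\in\Z$, not $x_d\in\Z$. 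This cannot be absorbed into the argument, because integrality at the single cusp $\infty$ does not detect these denominators. Take $E$ of conductor $11$ (so $X_0(11)\cong E$, $c_E=1$ and $\deg\pi_E=1$), $N=22$, $u=\mathrm{id}$ and $\mathbf x=(0,\tfrac12)$. The resulting formal differential is $f_E(q^2)\,dq/q$, which is integral. Yet $\deg(\Phi_2\circ j_{22})=\deg(\iota_{2,22,11})=3$, so $\tfrac12\Phi_2$ would give a morphism of degree $\tfrac34$. Thus $\mathbf x$ passes your test although $\mathbf x\notin L_{E,22}$.

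To exclude such denominators you need input away from $\infty$. For instance, the Fricke involution $w_N$ is defined over $\Q$ and satisfies $\iota_{d,N,M}\circ w_N=w_M\circ\iota_{N/(Md),N,M}$. Applying your test to $G\circ w_N$ therefore also gives $\frac{N}{Md}x_d\in\Z$. Together with $d\,x_d\in\Z$ this would settle squarefree $N/M$, but for $N/M=p^2$ it still leaves only $p\,x_p\in\Z$. In general the missing ingredient is the theorem of \cite{JeonKwon2026StrongWeil}: when $c_Ec_u=1$, the maps $u\circ\Phi_d$ form a $\Z$-basis of $\HomQ(J_0(N),E')$. The paper's proof simply cites this and then applies Proposition~\ref{prop:candidate-criteria} with the standard basis, exactly as in your final step. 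Your fallback amounts to that citation. Note, however, that it cannot be obtained by ``adapting'' \eqref{eq:source-old-lattice-inclusion}: that inclusion concerns $E$, and applied to $\hat u\circ G=\Phi_{\delta\mathbf x}$ it gives only $\delta\mathbf x\in\Z^n$. The statement for $E'$ with constant $c_Ec_u$ has to be quoted (or proved) in its own right.
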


\begin{proof}
When \(c_Ec_u=1\), it is proved in
\cite{JeonKwon2026StrongWeil} that the homomorphisms
\(u\circ\Phi_d\), indexed by \(d\mid N/M\), form a \(\Z\)-basis of
\(\HomQ(J_0(N),E')\).  Hence their old-coordinate lattice is \(\Z^n\),
and the assertion follows from Proposition \ref{prop:candidate-criteria}.
\end{proof}

When Corollary \ref{cor:fixed-target-old-basis} does not apply,
Lemma \ref{lem:isogenous-lift} characterizes \(L_{E',N}\) by a containment of
integral homology images, and Lemma \ref{lem:matrix-lift} converts the
containment in \eqref{eq:isogenous-lift-containment} into integral linear
congruences in \(\mathbf x\).  Smith normal form tests the congruences in
Lemma \ref{lem:matrix-lift}, and Lemma
\ref{lem:hermite-lattice-basis} computes a \(\Z\)-basis of \(L_{E',N}\) from
an integer generating matrix.  Proposition \ref{prop:candidate-criteria}
then decides whether \(Q_{E',N}\) represents \(D\) over $\Z$.

The construction of \(L_{E',N}\) and \(Q_{E',N}\) yields
the following existence criterion at level \(N\).

\begin{theorem}
\label{thm:pair-to-level}
Let \(D\) be a positive integer, and fix a level \(N\).  Then \(X_0(N)\)
admits a degree \(D\)
morphism to an elliptic curve over \(\Q\) if and only if there exist a divisor
\(M\mid N\), a strong Weil curve \(E\) of conductor \(M\), and an elliptic
curve \(E'\sim_{\Q}E\), taken up to \(\Q\)-isomorphism, such that
\(Q_{E',N}\) represents \(D\) over $\Z$.
\end{theorem}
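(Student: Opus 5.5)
The plan is to reduce Theorem \ref{thm:pair-to-level} to Proposition \ref{prop:candidate-criteria}. For that we need to show that the triples \((M,E,E')\) in the statement exhaust all possible elliptic targets of a nonconstant morphism \(X_0(N)\to E'\). The \emph{if} direction is immediate. If \(Q_{E',N}\) represents \(D\) over \(\Z\) for some admissible triple, then Proposition \ref{prop:candidate-criteria} produces a degree \(D\) morphism \(X_0(N)\to E'\) over \(\Q\), and \(E'\) is an elliptic curve over \(\Q\).

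For the \emph{only if} direction, let \(f:X_0(N)\to E''\) be a degree \(D\) morphism to an elliptic curve \(E''/\Q\).

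1. We first show that \(E''\) is isogenous over \(\Q\) to a strong Weil curve of conductor dividing \(N\). Translate \(f\) so that \(f(\infty)=0\); this uses that \(\infty\) is rational and does not change the degree. By the universal property of the Jacobian we get a nonzero homomorphism \(G:J_0(N)\to E''\) with \(f=G\circ j_N\). Since \(G\) is surjective, \(E''\) is a \(\Q\)-simple isogeny factor of \(J_0(N)\). By the decomposition of \(J_0(N)\) up to isogeny into newform quotients \(A_g\) of levels \(M\mid N\) (Atkin--Lehner--Li, as in \cite[Chapter~6]{Stein2007}), \(E''\) is \(\Q\)-isogenous to some \(A_g\) with \(\dim A_g=1\). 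Equivalently, \(E''\) is isogenous to the strong Weil curve \(E\) attached to a rational newform \(g\) of level \(M\mid N\). By Faltings' isogeny theorem, or directly since isogenous elliptic curves share the conductor \(M\), the conductor of \(E\) is exactly \(M\).

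2. Next we place \(E''\) among the admissible targets. Let \(E'\) be the representative of the \(\Q\)-isomorphism class of \(E''\) within the isogeny class of \(E\), and fix an isomorphism \(\alpha:E''\to E'\). Then \(\alpha\circ f\) is a degree \(D\) morphism \(X_0(N)\to E'\). Since the statement is taken up to \(\Q\)-isomorphism, this places us in the setting of Proposition \ref{prop:candidate-criteria} with a chosen generator \(u\) of \(\HomQ(E,E')\).

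3. Finally we conclude. Proposition \ref{prop:candidate-criteria} is independent of the chosen \(\Z\)-basis of \(L_{E',N}\), since a change of basis replaces \(Q_{E',N}\) by an equivalent form. Applied to \(\alpha\circ f\), it yields \(\mathbf z\in\Z^n\) with \(Q_{E',N}(\mathbf z)=D\).

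The main point needing care is step 1: the claim that every elliptic quotient of \(J_0(N)\) over \(\Q\) is isogenous to a strong Weil curve of conductor dividing \(N\). I would cite the isogeny decomposition \(J_0(N)\sim\prod_{M\mid N}\prod_g A_g^{\tau(N/M)}\) together with Poincaré reducibility, so that any surjection onto \(E''\) factors, up to isogeny, through a one-dimensional factor \(A_g\). This is consistent with Theorem \ref{thm:old-span}, which already presupposes that the relevant \(\HomQ\) spaces are nonzero only for such \(E\).

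A secondary check is that \(E'\) and \(u\) are well defined. The definition of \(L_{E',N}\) depends on \(u\), but \(u\) is unique up to sign because \(\HomQ(E,E')\) has rank one for non-CM \(E\); in the CM case over \(\Q\), \(\HomQ\) still has rank one since \(\operatorname{End}_\Q(E)=\Z\). A sign change \(u\mapsto -u\) does not alter \(L_{E',N}\). Either choice of \(u\) therefore gives the same lattice and the same representation set.
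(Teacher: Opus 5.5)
Your proposal is correct and follows the same route as the paper. Translate so that \(f(\infty)=0\), factor through \(J_0(N)\) to place the target in the \(\Q\)-isogeny class of a strong Weil curve of conductor \(M\mid N\), and apply Proposition~\ref{prop:candidate-criteria} in both directions. The one difference is that you justify the existence of \((M,E)\) explicitly via the isogeny decomposition of \(J_0(N)\) and Poincar\'e reducibility. The paper leaves this step implicit and instead cites Theorem~\ref{thm:divisibility}, which only shows that the pair survives the modular-degree filter.
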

\begin{proof}
Let \(f:X_0(N)\to E'\) have degree \(D\).  After replacing
\(f\) by \(f-f(\infty)\), the universal property of the Jacobian and Theorem
\ref{thm:divisibility} determine a divisor \(M\mid N\) and a retained strong
Weil curve \(E\) with \(E'\sim_\Q E\).  Proposition
\ref{prop:candidate-criteria} shows that \(Q_{E',N}\) represents \(D\) over $\Z$.
Conversely, every such representation gives a degree \(D\) map by
Proposition \ref{prop:candidate-criteria}.
\end{proof}

\subsection{Shimura subgroups and explicit quotient lattices}\label{subsec:new_quad_forms}

Theorem~\ref{thm:pair-to-level} reduces the existence problem to representation by the forms $Q_{E',N}$. These forms depend on the integral lattices $L_{E',N}$. We now study conditions under which the Shimura subgroup gives an explicit description of these lattices and of the factorization of morphisms to isogenous elliptic curves.

The result is not a separate existence criterion, but a structural description that can simplify the construction of the same degree forms. When its hypotheses fail, the integral-homology criterion of Section~\ref{subsec:isogenous-lift} remains available. 

\begin{definition}
    Let $M$ be a positive integer and let $\pi:X_1(M)\to X_0(M)$ be the natural map $(E,P)\mapsto(E,\left<P\right>)$. The Shimura subgroup $\Sigma(M)$ is the kernel of the map $\pi^*:J_0(M)\to J_1(M)$. Furthermore, for an abelian subvariety $A \subseteq J_0(M)$ we define the Shimura subgroup of $A$ to be $A \cap \Sigma(M)$.
\end{definition}

    We will study Shimura subgroups of $E$. For simplicity, we will often write $\Sigma_E=E\cap\Sigma(M)$. Let us also define a map \[\xi_{E,N}=(\Phi_1,\ldots,\Phi_{N/M}): J_0(N)\to E^n.\] The triviality of $\ker \xi_{E,N}^\vee$ was used in \cite{DerickxOrlic2024} to determine a quadratic form for elliptic curves $E$ of odd analytic rank. In general, this kernel is not necessarily trivial. However, we can link its structure with $\Sigma_E$ and obtain similar results.

\begin{theorem}[{\cite[Theorem 4]{LING199539}}]\label{thmling}
    Let $N$ be a positive integer, and let $M$ be a divisor of $N$ such that $\frac{N}{M}$ is square--free and $\left(M,\frac{N}{M}\right)=1$. We define
    $$\Sigma(M)_0^{n}:=\left\{(x_1,\ldots,x_{n}) \,\middle|\, x_i\in \Sigma(M),\, \sum_1^{n} x_i=0\right\}.$$
    We define the map $\tau^*_{N,M}:=(\iota^*_{1,N,M},\ldots,\iota^*_{N/M,N,M}): J_0(M)^{n}\to J_0(N)$.
    \begin{enumerate}[label=(\roman*)]
        \item If $M$ is odd or $\frac{N}{M}$ is a prime, then $\ker \tau_{N,M}^* = \Sigma(M)_0^{n}$.
        \item If $M$ is even and $\frac{N}{M}$ is not a prime, then $\ker \tau_{N,M}^*$ and $\Sigma(M)_0^{n}$ are equal up to a $2$-group.
    \end{enumerate}
\end{theorem}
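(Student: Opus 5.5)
The plan is to establish both inclusions between \(\ker\tau^*_{N,M}\) and \(\Sigma(M)^n_0\), proceeding by induction on the number of primes dividing \(N/M\). The case \(N/M=p\) prime is the base case.

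\textbf{Inclusion \(\Sigma(M)^n_0\subseteq\ker\tau^*_{N,M}\).} This direction should be formal. The Shimura subgroup is the kernel of \(\pi^*:J_0(M)\to J_1(M)\). The degeneracy maps \(\iota_{d,N,M}\) lift to degeneracy maps \(X_1(N)\to X_1(M)\), because \(\tau\mapsto d\tau\) normalizes the relevant groups. Hence for \(x\in\Sigma(M)\) the pullbacks \(\iota^*_{d}x\) all lie in \(\Sigma(N)\) and become trivial in \(J_1(N)\). This alone does not give \(\sum_d\iota_d^*x_d=0\) in \(J_0(N)\), so I would argue concretely. \(\Sigma(M)\) is Cartier dual to the cuspidal-type group and is generated by the images of \(\mu\)-type points. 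The pullbacks \(\iota_d^*\) act on \(\Sigma(M)\) compatibly with the identification \(\Sigma(M)\cong\) the dual of \((\Z/M)^\times/\pm1\) composed with a character-theoretic map. On this identification each \(\iota_d^*|_{\Sigma(M)}\) agrees with \(\iota_1^*|_{\Sigma(M)}\), since \(\gcd(d,M)=1\) and the diamond action is what distinguishes them. Consequently \(\sum\iota_d^*x_d=\iota_1^*(\sum x_d)=0\).

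\textbf{Inclusion \(\ker\tau^*_{N,M}\subseteq\Sigma(M)^n_0\), prime case.} First take \(N/M=p\) prime. I would use the Ihara/Ribet description of the kernel of \(\iota_1^*\oplus\iota_p^*:J_0(M)^2\to J_0(Mp)\). Dually, this kernel corresponds to the cokernel of the pushforwards \(H_1(X_0(Mp))\to H_1(X_0(M))^2\). By the congruence subgroup property / Ihara's lemma for \(\Gamma_0(M)\cap\Gamma^0(p)\), this cokernel is identified with the abelianization of \(\Gamma_0(M)\) modulo the subgroup generated by the amalgam relations. Its dual is the antidiagonal copy of \(\Sigma(M)\), in line with Ribet's identification of \(\ker(J_0(M)^2\to J_0(Mp))\) with the antidiagonal Shimura subgroup.

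\textbf{Induction step.} For composite squarefree \(N/M=p\cdot N'/M\), factor \(\tau^*_{N,M}\) as \(\tau^*_{N,Mp}\circ(\tau^*_{Mp,M})^{n/2}\) after grouping divisors \(d=d'\) or \(pd'\). The kernel is then controlled by an exact sequence involving \(\ker\tau^*_{Mp,M}\) and \(\ker\tau^*_{N,Mp}\cap\operatorname{im}\), and the inductive hypothesis applies to \(\Sigma(Mp)\). The main obstacle is comparing \(\Sigma(Mp)\) with the image of \(\Sigma(M)^2\) under \(\tau^*_{Mp,M}\). Counting orders, \(\Sigma(Mp)\) has order about \(\varphi(Mp)/\gcd\) stuff, against \(\varphi(M)\). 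The discrepancy should be killed by the condition \(\gcd(M,N/M)=1\) except possibly at \(2\), coming from the \(\pm1\) quotient in \((\Z/M)^\times/\pm1\) when \(M\) is even. This explains part (ii): I would show the extension classes in the induction are \(2\)-torsion exactly because of this \(\pm1\) ambiguity, and can only be nontrivial when \(M\) is even and at least two primes are involved.
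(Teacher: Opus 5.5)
The paper does not prove this statement; it quotes it from Ling. Your proposal therefore has to stand on its own, and it has a genuine gap. The easy inclusion is fine, and so is the prime case as a citation of Ribet's theorem (Ihara's lemma). But the induction step, which carries the entire content of the theorem beyond Ribet's case, is not carried out.

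\textbf{The missing step.} Your factorization $\tau^*_{N,M}=\tau^*_{N,Mp}\circ(\tau^*_{Mp,M})^{n/2}$ is correct. Take $\mathbf x\in\ker\tau^*_{N,M}$ and apply the inductive hypothesis, choosing $p$ odd when $M$ is odd so that $Mp$ stays odd and you get equality rather than equality up to a $2$-group. This gives $y_{d'}:=\iota_{1,Mp,M}^*x_{d'}+\iota_{p,Mp,M}^*x_{pd'}\in\Sigma(Mp)$ for $d'\mid N/(Mp)$, with $\sum_{d'}y_{d'}=0$. To conclude you need
\[
(\ast)\qquad x,y\in J_0(M),\ \ \iota_{1,Mp,M}^*x+\iota_{p,Mp,M}^*y\in\Sigma(Mp)\ \Longrightarrow\ x,y\in\Sigma(M),
\]
that is, $\Sigma(Mp)$ meets the $p$-old subvariety only in $\iota_{1,Mp,M}^*\Sigma(M)$.

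Once $(\ast)$ is known, the rest is formal. You get $y_{d'}=\iota_1^*(x_{d'}+x_{pd'})$, hence $\iota_1^*\sum_d x_d=0$, and $\iota_1^*$ is injective by Ribet's theorem.

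However, $(\ast)$ does not follow from Ribet's theorem, which only describes the preimage of $0$. Comparing $\#\Sigma(Mp)$ with $\#\Sigma(M)$ also cannot settle it. The real question is whether elements of $\Sigma(Mp)$ with nontrivial $(\Z/p\Z)^\times$-component lie in the image of $J_0(M)^2$, and orders say nothing about that.

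In fact, for odd $M$ and given Ribet's theorem, $(\ast)$ is equivalent to the two-prime case of the theorem itself. Suppose $(x,y)$ violated $(\ast)$, and take any further prime $q\nmid Mp$. By the easy inclusion at level $Mp$, the vector $(x_1,x_p,x_q,x_{pq})=(x,y,-x,-y)$ lies in $\ker\tau^*_{Mpq,M}$, yet it is not in $\Sigma(M)^4_0$. So the step you call ``the main obstacle'' is exactly the new multi-prime input that Ling's theorem supplies. Heuristics such as ``should be killed'' or ``the extension classes are $2$-torsion because of the $\pm1$'' do not replace it. Moreover, the $\pm1$ in $(\Z/M\Z)^\times/\pm1$ is present for every $M$, so by itself it cannot explain why part (ii) singles out even $M$.

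\textbf{A smaller correction.} The easy inclusion is true, but not for the reason you give. The map $\iota_d$ is induced by $\gamma\mapsto\mathrm{diag}(d,1)\,\gamma\,\mathrm{diag}(d,1)^{-1}$ from $\Gamma_0(N)$ to $\Gamma_0(M)$, and this conjugation leaves the diagonal entries unchanged. Elements of $\Sigma(M)$ correspond to characters of $\Gamma_0(M)$ that kill parabolic and elliptic elements and factor through the lower-right entry modulo $M$. Thus $\Sigma(M)$ is dual to a quotient of $(\Z/M\Z)^\times/\pm1$, not to the whole group. Consequently $\iota_d^*=\iota_1^*$ on $\Sigma(M)$ for every $d\mid N/M$, and $\sum_d\iota_d^*x_d=\iota_1^*\sum_d x_d=0$ on $\Sigma(M)^n_0$. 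Neither $\gcd(d,M)=1$ nor diamond operators play any role.
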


We know that $\Sigma(M)_0^n\cap E^n\cong \Sigma_E^{n-1}$. From looking at the proof of this theorem one can see that actually \[\Sigma(M)_0^n\cap E^n\subseteq \ker \xi_{E,N}^\vee\] for all $E$ and $N$. Hence it is enough to check the cardinalities of these groups, and this is easy to do with Sage because there is an inbuilt function \texttt{E.shimura\_subgroup()}.

We prove a structural lemma that will be used in the main results of this section. 

\begin{lemma}\label{lem:hom_to_J0_ambient}
	Let $E$ be a strong Weil curve of level $M\mid N$ and $E'\sim_\Q E$.
    Assume that $\ker\xi_{E,N}^\vee=\Sigma(M)_0^n\cap E^n$. Then there is a rational isogeny $A:=E^n/\ker \xi_{E,N}^\vee \to (E/\Sigma_E)^n$ of degree $\#\Sigma_E$. Moreover, the composition with this isogeny gives an injection $$\Hom_\Q(E', J_0(N))\cong\Hom_\Q(E',A) \hookrightarrow \Hom_\Q(E',(E/\Sigma_E)^n).$$
\end{lemma}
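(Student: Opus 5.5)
Under the hypothesis $\ker\xi_{E,N}^\vee=\Sigma(M)_0^n\cap E^n\cong\Sigma_E^{n-1}$, the isogeny comes from a chain of quotients, and the injection comes from the fact that composing with an isogeny kills no nonzero homomorphism.

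\textbf{Step 1: the kernel.} Put $K:=\ker\xi_{E,N}^\vee=\{(x_1,\dots,x_n)\in\Sigma_E^n : \sum x_i=0\}$. This is the kernel of the summation map $\Sigma_E^n\to\Sigma_E$, which is surjective (take $(x,0,\dots,0)$). Hence $K\subseteq\Sigma_E^n$ has index $\#\Sigma_E$.

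\textbf{Step 2: the isogeny.} The quotient $E^n\to(E/\Sigma_E)^n$ has kernel $\Sigma_E^n\supseteq K$, so it factors through a map $E^n/K\to(E/\Sigma_E)^n$. This map has kernel $\Sigma_E^n/K$, which has order $\#\Sigma_E$ by Step 1, so it is an isogeny of that degree. It is defined over $\Q$ because $\Sigma(M)$, and hence $\Sigma_E$ and $K$, are $\mathrm{Gal}(\overline{\Q}/\Q)$-stable. Indeed, $\Sigma(M)$ is the kernel of the rational map $\pi^*$, and $\Sigma_E=E\cap\Sigma(M)$ with $E\subseteq J_0(M)$ rational.

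\textbf{Step 3: identify $A$ with the $E$-part of $J_0(N)$.} The dual $\xi_{E,N}^\vee:E^n\to J_0(N)$ is given by $(x_d)\mapsto\sum_d\iota_{d,N,M}^*\Phi_E^\vee(x_d)$, which is the restriction of $\tau^*_{N,M}$ to $E^n$. Its image is an abelian subvariety of $J_0(N)$ isogenous to $E^n$, and it is isomorphic to $A=E^n/K$ over $\Q$. Every rational homomorphism $E'\to J_0(N)$ lands in the $E$-isotypic part of $J_0(N)$. By Theorem~\ref{thm:old-span}, dualized, together with multiplicity $n$ of $E$ in the isogeny decomposition, this isotypic part is exactly the image of $\xi_{E,N}^\vee$. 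This yields $\Hom_\Q(E',J_0(N))\cong\Hom_\Q(E',A)$.

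To justify "lands in", take $f:E'\to J_0(N)$ and consider its image $f(E')$. By semisimplicity up to isogeny and multiplicity $n$, this image is an elliptic curve inside the $E$-isotypic abelian subvariety. That subvariety is the connected image $\xi_{E,N}^\vee(E^n)$, since both are $n$-dimensional and connected and one contains the other.

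\textbf{Step 4: injectivity.} Let $\beta:A\to(E/\Sigma_E)^n$ be the isogeny from Step 2, and suppose $g\in\Hom_\Q(E',A)$ satisfies $\beta\circ g=0$. Then the connected image $g(E')$ lies in the finite group $\ker\beta$, so $g=0$. This is the same argument as in Lemma~\ref{lem:isogenous-lift}.

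The main obstacle is Step 3: showing that the image of $\xi_{E,N}^\vee$ is the full $E$-isotypic subvariety, not a proper piece of it. I would try to get this from the dimension count combined with connectedness of the image.
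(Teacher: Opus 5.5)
Your proposal is correct and follows the paper's proof. The index-$\#\Sigma_E$ count gives the rational isogeny $E^n/\ker\xi_{E,N}^\vee\to(E/\Sigma_E)^n$. Injectivity holds because composing with an isogeny kills no nonzero homomorphism; the paper phrases this through torsion-freeness of $\Hom_\Q(E',A)$ instead of your connected-image argument. The identification $\Hom_\Q(E',J_0(N))\cong\Hom_\Q(E',A)$ comes from $A$ being the maximal $E$-isogenous subvariety of $J_0(N)$. The paper simply cites \cite[Section~3]{DerickxOrlic2024} for that maximality, whereas your dimension-plus-connectedness argument proves it directly, using the multiplicity $n$ and the finiteness of $\ker\xi_{E,N}^\vee$.
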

\begin{proof}
	Since $\ker\xi_{E,N}^\vee=\Sigma(M)_0^n\cap E^n$, which is an index $\#\Sigma_E$ subgroup of $\Sigma_E^n$, we obtain a rational isogeny $\Phi : A\to (E/\Sigma_E)^n$ of that degree. Hence, we also get a mapping \[\Hom_\Q(E',A) \rightarrow \Hom_\Q(E',(E/\Sigma_E)^n), \ f\mapsto \Phi\circ f.\] This mapping is injective because $\Hom_\Q(E',A)$ is torsion-free \cite[Lemma 12.2]{Milne1986AV}. 
    
    Finally, $A$ is the maximal $E$ isogenous subvariety of $J_0(N)$, meaning that any rational homomorphism $E\to J_0(N)$ factors uniquely through $A$ (see \cite[Section 3]{DerickxOrlic2024}). This gives us the isomorphism $\Hom_\Q(E', J_0(N))\cong\Hom_\Q(E',A)$.
\end{proof}

For the next proposition, we introduce the following notation. Let $E_1$ and $E_2$ be elliptic curves, and $f: E_1 \to E_2$ be an isogeny. Then for any $\mathbf c=(c_i) \in \Z^n$ we define \begin{equation} \label{eq:diagonal_morhpism}f_{\mathbf c} : E_1 \to E_2^n, \ x\mapsto (c_if(x))_i.
\end{equation}

\begin{proposition}\label{prop:quad_form_factorisation}
Let $E$ be a strong Weil curve of level $M\mid N$ and $E'\sim_\Q E$. Suppose that $\Sigma_E$ is cyclic and $\Sigma(M)_0^n\cap E^n=\ker \xi_{E,N}^\vee$. Then the following statements hold.
\begin{enumerate}
	\item Any rational homomorphism $E' \to J_0(N)$ factors through some elliptic curve $E''=E/H$ with $H\leq \Sigma_E$.
	\item Let $H\leq\Sigma_E$, and put $E''=E/H$ and $h=\#H$. If $f''$ denotes the isogeny $E'' \to E/\Sigma_E$, then
	$$ \Hom_\Q(E'', J_0(N)) = \set{f''_{\mathbf c} \, \middle |\, \mathbf c \in \Z^n,\, \sum_{i=1}^n c_i \equiv 0 \pmod{h}} \subseteq \Hom_\Q(E'',(E/\Sigma_E)^n).$$
\end{enumerate}
\end{proposition}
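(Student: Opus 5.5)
The idea is to reduce everything to the explicit description of $A=E^n/\ker\xi_{E,N}^\vee$ given by Lemma~\ref{lem:hom_to_J0_ambient}. Write $K:=\Sigma(M)_0^n\cap E^n=\{(x_i)\in\Sigma_E^n : \sum x_i=0\}$, so that $A=E^n/K$. Let $q:E\to E/\Sigma_E$ be the quotient map and $\Psi:A\to(E/\Sigma_E)^n$ the isogeny induced by $q^n$. The kernel of $\Psi$ is $\Sigma_E^n/K\cong\Sigma_E$ via the sum map.

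For $\mathbf c\in\Z^n$, define $\alpha_{\mathbf c}:E\to A$ as the composite of $x\mapsto(c_ix)_i$ with $E^n\to A$. Then $\Psi\circ\alpha_{\mathbf c}=q_{\mathbf c}$ in the notation \eqref{eq:diagonal_morhpism}. The key computation is the kernel of $\alpha_{\mathbf c}$: a point $x$ lies in it iff $c_ix\in\Sigma_E$ for all $i$ and $(\sum c_i)x=0$. When $\gcd(\mathbf c)=1$ this says exactly $x\in\Sigma_E$ and $(\sum_i c_i)x=0$. Since $\Sigma_E$ is cyclic, $\ker\alpha_{\mathbf c}$ is then the unique subgroup of $\Sigma_E$ of order $\gcd(\#\Sigma_E,\sum_i c_i)$. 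When $\gcd(\mathbf c)=m>1$, the kernel contains $E[m]$ and so is not cyclic.

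We also need that $\Hom_\Q$ between two $\Q$-isogenous elliptic curves is $\Z$ times a cyclic isogeny. For $E''=E/H$ with $H\le\Sigma_E$, the natural map $f'':E''\to E/\Sigma_E$ has cyclic kernel $\Sigma_E/H$, so $\Hom_\Q(E'',E/\Sigma_E)=\Z f''$. Similarly $\Hom_\Q(E,E/\Sigma_E)=\Z q$. Consequently $\Hom_\Q(E'',(E/\Sigma_E)^n)=\{f''_{\mathbf c}:\mathbf c\in\Z^n\}$.

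For part (2), by Lemma~\ref{lem:hom_to_J0_ambient} we must decide for which $\mathbf c$ the map $f''_{\mathbf c}$ lies in $\Psi\circ\Hom_\Q(E'',A)$. Let $\pi_H:E\to E''$ be the quotient map. Then $f''_{\mathbf c}\circ\pi_H=q_{\mathbf c}=\Psi\circ\alpha_{\mathbf c}$.
\begin{itemize}[leftmargin=*]
\item If $g:E''\to A$ satisfies $\Psi\circ g=f''_{\mathbf c}$, then $\Psi\circ(g\circ\pi_H)=\Psi\circ\alpha_{\mathbf c}$. Injectivity of $\Psi\circ-$ (Lemma~\ref{lem:hom_to_J0_ambient}, applied with source $E$) gives $g\circ\pi_H=\alpha_{\mathbf c}$, so $\alpha_{\mathbf c}$ kills $H$.
\item Conversely, if $\alpha_{\mathbf c}$ kills $H$, it descends to some $g:E''\to A$, defined over $\Q$ since $H$ is rational, with $\Psi\circ g=f''_{\mathbf c}$ because $\pi_H$ is surjective.
\end{itemize}
It remains to check when $\alpha_{\mathbf c}$ kills $H$. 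For $x\in H\subseteq\Sigma_E$, each $c_ix$ already lies in $\Sigma_E$, so $(c_ix)_i\in K$ iff $(\sum_i c_i)x=0$. As $H$ is cyclic of order $h$, this holds for all $x\in H$ iff $h\mid\sum_i c_i$. This proves (2), with no gcd condition needed.

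For part (1), let $g:E'\to J_0(N)$ be nonzero; then $g$ factors through $A$ by Lemma~\ref{lem:hom_to_J0_ambient}. Its image $E_1=g(E')\subseteq A$ is an elliptic curve, and $g$ factors as $E'\to E_1\hookrightarrow A$, so it suffices to show $E_1\cong E/H$ for some $H\le\Sigma_E$. Choose a cyclic generator $p:E\to E_1$ of $\Hom_\Q(E,E_1)$ and let $j:E_1\hookrightarrow A$ be the inclusion. Then $\Psi\circ j\circ p\in\Hom_\Q(E,E/\Sigma_E)^n$ equals $q_{\mathbf c}=\Psi\circ\alpha_{\mathbf c}$ for some $\mathbf c$, and injectivity again gives $j\circ p=\alpha_{\mathbf c}$. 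Hence $\ker\alpha_{\mathbf c}=\ker p$, which is cyclic. By the kernel computation this forces $\gcd(\mathbf c)=1$ and $\ker p=H\le\Sigma_E$, so $E_1\cong E/H$.

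The main obstacle I expect is the auxiliary fact that $\Hom_\Q$ between $\Q$-isogenous elliptic curves is generated by any cyclic isogeny between them. This needs a short argument, including the CM case, where $\Hom_\Q$ is still rank one because the extra endomorphisms are not defined over $\Q$. With that in hand, the rest is bookkeeping on kernels.
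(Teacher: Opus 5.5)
Your proof is correct, but it reaches the result by a somewhat different route from the paper, mainly in part (1).

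\textbf{The paper's route.} The paper works with complex uniformizations. It fixes lattices $\Lambda,\Lambda'\subseteq\Lambda_s$ for $E$, $E'$ and $E/\Sigma_E$, with $f':E'\to E/\Sigma_E$ of minimal degree. It writes $A=\C^n/\Lambda_s(N)$ with $\Lambda_s(N)=\ker t$, where $t:\Lambda_s^n\to\Lambda_s/\Lambda$ is the sum map. This turns ``$f'_{\mathbf c}$ comes from $A$'' into the condition $\mathbf c\Lambda'\subseteq\ker t$.
\begin{enumerate}
\item For (1) it takes the single curve $E''=\C/(\Lambda+\Lambda')$, i.e.\ $H=(\Lambda+\Lambda')/\Lambda$, and notes that $\mathbf c\Lambda\subseteq\Lambda^n\subseteq\ker t$.
\item For (2) it computes $t(\mathbf c\lambda'')\equiv(\sum_i c_i)\lambda''$.
\end{enumerate}

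\textbf{Comparison.} Your part (2) is the same computation phrased with torsion points: you descend $\alpha_{\mathbf c}$ along $\pi_H$ and use injectivity of $\Psi\circ-$.

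Your part (1) is genuinely different. You factor $g$ through its image and identify $\ker p=\ker\alpha_{\mathbf c}$ via the Bezout/cyclicity computation. This is purely algebraic, and it proves a finer fact: every elliptic curve over $\Q$ contained in $A$ is $E/H$, where $H$ is the subgroup of $\Sigma_E$ of order $\gcd(\#\Sigma_E,\sum_i c_i)$ for a primitive $\mathbf c$.

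What your argument does not give is the paper's uniformity. In the paper, $H$ depends only on $E'$, so every element of $\Hom_\Q(E',J_0(N))$ is $g''\circ\phi$ for one fixed isogeny $\phi:E'\to E''$. Your $H$ depends on $g$. For example, with $E'=E$ and $\mathbf c=(1,-1,0,\dots,0)$, your image is $E/\Sigma_E$, while the paper's $E''$ is $E$ itself. The statement as written needs only your version.

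\textbf{Two points to make explicit.}
\begin{enumerate}
\item Every $H\le\Sigma_E$ is Galois-stable, because it is the unique subgroup of its order in the cyclic rational group $\Sigma_E$. This is what makes $E/H$ and your descended map $g$ defined over $\Q$.
\item The auxiliary fact you flag is short. Since $\operatorname{End}_\Q(E)=\Z$, we get $\Hom_\Q(E_1,E_2)\cong\Z$.
\begin{itemize}
\item A cyclic isogeny of the form $m\phi$ forces $m=\pm1$, since $E_1[m]\subseteq\ker(m\phi)$.
\item A generator $p$ is cyclic: if $E_1[m]\subseteq\ker p$ with $m>1$, then $p=p_1\circ[m]$ with $p_1$ rational by uniqueness, contradicting that $p$ generates.
\end{itemize}
The paper uses this same fact without proof when it asserts that the minimal-degree $f'$ is cyclic and generates $\Hom_\Q(E',E/\Sigma_E)$.
\end{enumerate}
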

\begin{proof}
	We first introduce some notation. Define $E_s:=E/\Sigma_E$ and let $f$ denote an induced isogeny $E \to E_s$. In addition, we let $f': E'\to E_s$ be a $\Q$-rational isogeny of minimal degree. We write $E(\C)=\C/\Lambda$, $E'(\C)=\C/\Lambda'$, $E_s=\C/\Lambda_s$, and assume that the lattices have been chosen such that $\Lambda, \Lambda' \subseteq \Lambda_s$ and that $f_\C$ and $f'_\C$ are induced by the inclusions $\Lambda, \Lambda' \subseteq \Lambda_s$. Additionally, we take generators $w_1,w_2$ of $\Lambda_s$ such that $\#\Sigma_Ew_1,w_2$ are generators of $\Lambda$. Such generators exist because $\Sigma_E$ is cyclic. We let $$t : \Lambda_s^n \to \Lambda_s/\Lambda \cong \Sigma_E$$ be the sum map. Finally, we let $\Lambda_s(N)$ be the lattice such that $\textrm{Im}\, \xi_{E,N}^\vee = \C^n/\Lambda_s(N)$.
	
	The assumption that the kernel of $\xi_{E,N}^\vee$ is as expected is equivalent to 
	$$\Lambda_s(N) = \ker t.$$
	Since $f'$ was chosen of minimal degree, we know that it is cyclic and a generator of $\Hom_\Q(E',E_s)\cong\Z$. 
    In particular, any element of $\Hom_\Q(E',E_s^n)$ is of the form $f'_{\mathbf c}$ for some $\mathbf c\in \Z^n$.
	
	We start with the proof of part (1).  Let $\mathbf c=(c_1,\ldots,c_n)\in \Z^n$ and define $$\mathbf c\Lambda' := \set{(c_i\lambda') | \lambda' \in \Lambda'}\subseteq \Lambda'.$$ The condition that $f_{\mathbf c}$ is an element of $\Hom(E', J_0(N))$ is equivalent to $$\mathbf c\Lambda' \subseteq \Lambda_s(N).$$ Define $\Lambda'' = \Lambda+\Lambda'$, $E''(\C) := \C/\lambda$ and let $f'': E'' \to E_s$ be the morphism corresponding to the inclusion $\Lambda+\Lambda'$. Note that since $f$ and $f'$ are $\Q$-rational, $f''$ can also be defined over $\Q$. Furthermore, $f'$ factors through $f''$, hence $f'_{\mathbf c}$ factors through $f''_{\mathbf c}$.
    
    It remains to prove that \[f'_{\mathbf c}\in\Hom_\Q(E', J_0(N))\implies f''_{\mathbf c}\in\Hom_\Q(E'', J_0(N)).\]
	If $f'_{\mathbf c} \in \Hom_\Q(E', J_0(N))$, then $\mathbf c \Lambda' \subseteq \Lambda_s(N).$ Also $\mathbf c\Lambda \subseteq \Lambda^n \subseteq \Lambda_s(N)$, so that $\mathbf c\Lambda'' =\mathbf c\Lambda + \mathbf c\Lambda' \subseteq \Lambda_s(N)$, which implies that indeed $f''_{\mathbf c} \in \Hom_\Q(E'', J_0(N))$.
	
	Now we prove part (2). The condition $f''_{\mathbf c} \in \Hom_\Q(E'', J_0(N))$ is equivalent to $\mathbf c \Lambda'' \subseteq \Lambda_s(N)$. As $\Lambda_s(N)=\ker t$, this is equivalent to $t(\mathbf c \Lambda'')=0$. Now if $\lambda''\in \Lambda''$, then \[t(\mathbf c\lambda'') \equiv \left(\sum c_i \right )\lambda''  \pmod{\Lambda}.\] If we take $\lambda''$ such that $\lambda '' \pmod{\Lambda}$ is a generator of $\Lambda''/\Lambda \cong H$, then $\lambda''$ has order $h$. Thus it follows that $$f''_{\mathbf c} \in \Hom(E'',J_0(N)) \Leftrightarrow \left(\sum c_i \right )\lambda'' \equiv 0 \pmod{\Lambda} \Leftrightarrow \sum c_i \equiv 0 \pmod{h},
	$$
	as desired.
\end{proof}

\begin{corollary}\label{cor:new_basis}
Let $E/\Q$ be a strong Weil curve of conductor $M\mid N$. 
Suppose that $\Sigma_E$ is cyclic and $\Sigma(M)^n_0\cap E^n=\ker\xi^\vee_{E,N}$.
Let $H\leq\Sigma_E$, and put $E''=E/H$ and $h=\#H$.
Let $u:E\to E''$ be the quotient isogeny.
Then the old-coordinate lattice with respect to $u$ is
\[
L_{E'',N}=\left\{
\frac{\mathbf{c}}{h} \,\middle|\, \mathbf{c}=(c_d)_{d\mid N/M}\in\Z^n,\quad
\sum_{d\mid N/M}c_d\equiv 0\pmod h
\right\}.
\]
Consequently,
\[
\{u\circ\Phi_1\}
\cup
\left\{
\frac{u\circ\Phi_d-u\circ\Phi_1}{h}
\;\middle|\;
d\mid N/M,\ d\neq1
\right\}
\]
is a $\Z$-basis of $\HomQ(J_0(N),E'')$.
\end{corollary}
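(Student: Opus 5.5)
The plan is to dualize Proposition~\ref{prop:quad_form_factorisation}(2), which describes $\HomQ(E'',J_0(N))$, so as to describe $\HomQ(J_0(N),E'')$ instead, and then to read the answer off in old coordinates.

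\emph{Translating homomorphisms into old coordinates.} The map $\xi_{E,N}=(\Phi_d)_d:J_0(N)\to E^n$ is surjective. Every $G\in\HomQ(J_0(N),E'')$ becomes, after composing with the dual $\hat u:E''\to E$ (of degree $h$), an element of $\HomQ(J_0(N),E)$ with rational old coordinates. By definition, $G=u\circ\Phi_{\mathbf x}$ means $\hat u\circ G=\Phi_{h\mathbf x}$. A cleaner route is Rosati/Hom-duality. Homomorphisms $J_0(N)\to E''$ correspond, via $G\mapsto G^\vee$ and autoduality of $J_0(N)$ and $E''$, to homomorphisms $E''\to J_0(N)$. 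Under this correspondence $u\circ\Phi_d$ corresponds to $\Phi_d^\vee\circ\hat u$, and $\Phi_d^\vee$ is the $d$-th component of $\xi_{E,N}^\vee$. So $u\circ\Phi_{\mathbf x}$ corresponds to the map $E''\to E^n\to A\subseteq J_0(N)$ given by $x\mapsto \xi^\vee_{E,N}\bigl((x_d\hat u(x))_d\bigr)$.

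\emph{Matching with Proposition~\ref{prop:quad_form_factorisation}(2).} By Lemma~\ref{lem:hom_to_J0_ambient}, we may compose into $(E/\Sigma_E)^n$ via the isogeny $A\to(E/\Sigma_E)^n$. There, $\xi^\vee_{E,N}$ followed by this isogeny is the coordinatewise quotient $f:E\to E/\Sigma_E$. So the element above becomes $(x_d\, f\circ\hat u)_d$. Now $f\circ\hat u=f''\circ u\circ\hat u=f''\circ[h]$, so it equals $f''_{h\mathbf x}$. Proposition~\ref{prop:quad_form_factorisation}(2) then says this lies in $\HomQ(E'',J_0(N))$ if and only if $\mathbf c:=h\mathbf x\in\Z^n$ and $\sum_d c_d\equiv0\pmod h$. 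This is exactly the claimed description of $L_{E'',N}$.

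\emph{Main obstacle.} The hard step is the bookkeeping of duals. One must check that the dual of $u\circ\Phi_{\mathbf x}$ is, up to the canonical identifications and the ordering of coordinates, exactly the composite with $\hat u$ used above. One must also check that the identification of $A$ inside $J_0(N)$ in Lemma~\ref{lem:hom_to_J0_ambient} is compatible with $\xi^\vee_{E,N}$. If this duality step is troublesome, I would instead argue directly with lattices, as in the proof of Proposition~\ref{prop:quad_form_factorisation}: compute homology images, using that $\Phi_{\mathbf c}$ lifts through $\hat u$ exactly when its image lattice lies in $\hat u_*\Lambda''$ (Lemma~\ref{lem:isogenous-lift}). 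That criterion becomes the same congruence via the sum map $t$.

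\emph{Basis.} Finally, the lattice $\{\mathbf c/h:\mathbf c\in\Z^n,\ \sum c_d\equiv 0\ (h)\}$ has the $\Z$-basis $e_1$, $(e_d-e_1)/h$ for $d\neq1$. Indeed, these vectors lie in the lattice; their span contains $he_1/h=e_1$ and all $e_d$; and any $\mathbf c/h$ in the lattice equals $\sum_{d\ne1}c_d(e_d-e_1)/h+\bigl(\sum_d c_d/h\bigr)e_1$ with integral coefficients. Applying $\mathbf x\mapsto u\circ\Phi_{\mathbf x}$ gives the stated $\Z$-basis of $\HomQ(J_0(N),E'')$.
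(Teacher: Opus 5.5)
Your proposal is correct and follows the route the paper intends. The paper gives no separate proof and treats the corollary as the dual form of Proposition~\ref{prop:quad_form_factorisation}(2); your identification $(u\circ\Phi_{\mathbf x})^\vee=\xi^\vee_{E,N}\circ(x_d\hat u)_d$, combined with $f\circ\hat u=[h]\circ f''$, supplies exactly that omitted step and accounts for the denominator $h$. Your final basis check is also fine, since the $n$ listed vectors generate the rank-$n$ lattice $L_{E'',N}$ and therefore form a $\Z$-basis of it.
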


The coefficients for the quadratic form with this new basis can easily be calculated with Theorem \ref{thm:do-degree-pairing} as we are just performing a base change. We will see an example of that in Proposition \ref{prop:quad_forms} and Example \ref{ex:84}.

We also see that the \emph{old quadratic form} $Q_{E,N}$ represents all possible degrees of morphisms $X_0(N)\to E$. Indeed, in this case we have $\#H=1$ and $u=\operatorname{Id}_E$, and the basis $\{\Phi_d\}_{d\mid N/M}$ may be replaced by the basis consisting of $\Phi_1$ and $\Phi_d-\Phi_1$ for all divisors $d\mid N/M$ with $d\neq1$. These two bases are related by a unimodular change of basis, so $Q_{E,N}$ already gives the full degree spectrum of morphisms $X_0(N)\to E$.

The above results are useful because, if the conditions on $\Sigma_E$ are satisfied, then we can easily calculate quadratic forms for elliptic curves $E/H$ and any rational morphism from $X_0(N)$ to $E'$ will factor as \[X_0(N)\to E/H\to E'.\] Furthermore, the $\Q$-isogeny graphs of elliptic curves over $\Q$ are widely known and available on LMFDB \cite{LMFDB}.

\begin{remark}\label{rem:odd_rank}
    Notice that these results are an extension of results in \cite{DerickxOrlic2024}. Only odd analytic rank elliptic curves $E$ were considered there and it was computed that \[\Sigma_E=\{0\}=\ker \xi_{E,N}^\vee\] for all $\textup{Cond}(E)\mid N<408$. In that case the \emph{old quadratic form} gives all possible degrees of morphisms $X_0(N)\to E$ and any morphism $X_0(N)\to E'$ factors through $E$, as was proved in \cite[Theorem 1.12]{DerickxOrlic2024}.

    Moreover, if $\textup{Cond}(E)=N$, then the basis of $\HomQ(J_0(N),E)$ will be just $\{\Phi_1\}=\{\pi_E\},$ where $\pi_E$ is the modular parametrization of $E$, and all maps $J_0(N)\to E'$ will factor through some map $u\circ\pi_E:J_0(N)\to E/H$, and therefore through $E$. As we can see, this case directly corresponds to the Modularity theorem.
\end{remark}

\begin{remark}\label{rem:shimura_ker_not_equal}
    We may wonder how often this method can be used. Theorem \ref{thmling} tells us that these groups may differ by a $2$-group and the question is when does the condition $\Sigma(M)_0^n\cap E^n=\ker \xi_{E,N}^\vee$ hold and when is $\Sigma_E$ cyclic.

    The Shimura subgroup $\Sigma_E$ is cyclic for all elliptic curves of conductor $\textup{Cond}(E)\leq800$. Moreover, we have that $\#\Sigma_E\leq16$ for all elliptic curves $E/\Q$, see \cite[Remark 4.8]{DerickxOrlic2026Intermediate}. For odd rank elliptic curves we also know that $\Sigma_E\leq E[2]$ by \cite[Proposition 4.7]{DerickxOrlic2026Intermediate} and the Shimura subgroup was computed to be trivial if $\textup{Cond}(E)\leq800$. So, it makes sense to ask what is the possible size of $\Sigma_E$ and is it bounded by $3$ when $N\geq18$.
    
    Out of all pairs $(E,N)$ considered for determining $D$-elliptic curves $X_0(N)$ in Section \ref{sec:positive}, we had $\Sigma(M)_0^n\cap E^n\neq\ker \xi_{E,N}^\vee$ for only $4$ pairs. These are


\begin{center}
\small
\setlength{\tabcolsep}{4pt}
\begin{tabular}{cccccc}
\toprule
$E$ & $N$
& $\#(\Sigma(M)_0^n\cap E^n)$
& $\ker\xi_{E,N}^{\vee}$
& $\#\ker\xi_{E,N}^{\vee}$
& Index \\
\midrule
$\lmfdbec{32}{a}{4}$ & $256$ & $8$
& $(\Z/2\Z)^2\oplus\Z/4\Z$ & $16$ & $2$ \\
$\lmfdbec{32}{a}{4}$ & $384$ & $32$
& $(\Z/2\Z)^4\oplus\Z/4\Z$ & $64$ & $2$ \\
$\lmfdbec{32}{a}{4}$ & $512$ & $16$
& $(\Z/2\Z)^2\oplus(\Z/4\Z)^2$ & $64$ & $4$ \\
$\lmfdbec{96}{b}{3}$ & $384$ & $1$
& $\Z/2\Z$ & $2$ & $2$ \\
\bottomrule
\end{tabular}
\end{center}

These pairs are the only exceptions for all $N\leq623$. Therefore, we expect that this method is applicable for a vast majority of curves.
\end{remark}

\begin{remark}\label{rem:conjecture-counterexample}
    In \cite[Conjecture 3.13]{DerickxOrlic2024} the authors conjectured that $\Sigma(M)_0^n\cap E^n=\ker \xi_{E,N}^\vee=\{0\}$ for all strong Weil curves $E/\Q$ of odd analytic rank. However, the following counterexamples in Table \ref{tab:noninjective_kernel} show that this conjecture does not hold without further assumptions. Notice that the kernel is a $2$-group in all these examples, as suggested by Theorem \ref{thmling}.

    \begin{longtable}{cccc}
\caption{Non-injective examples and kernel invariants. All these curves have rank $1$ and trivial Shimura subgroup $\Sigma_E$.}
\label{tab:noninjective_kernel}\\

\toprule
LMFDB label of $E$ & $N$ & $N/M$ & kernel \\
\midrule
\endfirsthead

\multicolumn{4}{c}{\tablename\ \thetable\ -- continued from previous page} \\
\toprule
LMFDB label of $E$ & $N$ & $N/M$ & kernel \\
\midrule
\endhead

\midrule
\multicolumn{4}{r}{Continued on next page}
\endfoot

\bottomrule
\endlastfoot

\lmfdbec{288}{b}{3} & 1152 & 4 & $[2]$ \\
\lmfdbec{480}{a}{3} & 1920 & 4 & $[2]$ \\
\lmfdbec{480}{f}{3} & 1920 & 4 & $[2]$ \\
\lmfdbec{672}{e}{2} & 2688 & 4 & $[2]$ \\
\lmfdbec{672}{f}{3} & 2688 & 4 & $[2]$ \\
\lmfdbec{800}{d}{3} & 3200 & 4 & $[2]$ \\
\lmfdbec{288}{b}{3} & 2304 & 8 & $[2,2]$ \\
\lmfdbec{480}{a}{3} & 3840 & 8 & $[2,2]$ \\
\lmfdbec{480}{f}{3} & 3840 & 8 & $[2,2]$ \\
\lmfdbec{672}{e}{2} & 5376 & 8 & $[2,2]$ \\
\lmfdbec{672}{f}{3} & 5376 & 8 & $[2,2]$ \\
\lmfdbec{288}{b}{3} & 3456 & 12 & $[2,2]$ \\
\lmfdbec{800}{d}{3} & 9600 & 12 & $[2,2]$ \\
\lmfdbec{288}{b}{3} & 4608 & 16 & $[2,2,2]$ \\
\lmfdbec{480}{a}{3} & 7680 & 16 & $[2,2,2]$ \\
\lmfdbec{480}{f}{3} & 7680 & 16 & $[2,2,2]$ \\
\lmfdbec{480}{a}{3} & 9600 & 20 & $[2,2]$ \\
\lmfdbec{480}{f}{3} & 9600 & 20 & $[2,2]$ \\
\lmfdbec{288}{b}{3} & 9216 & 32 & $[2,2,2,2]$ \\
\lmfdbec{288}{b}{3} & 18432 & 64 & $[2,2,2,2,2]$ \\

\end{longtable}
\end{remark}

\subsection{The algorithm}\label{subsec:procedure-summary}

We now combine the results of the previous subsections into an algorithm for determining the $D$-elliptic modular curves $X_0(N)$.

The modular degree and oldform criteria give inexpensive necessary conditions, while Proposition~\ref{prop:candidate-criteria} gives the final necessary and sufficient condition for a fixed target. The Shimura subgroup method of Section~\ref{subsec:new_quad_forms} gives a more explicit description of the relevant lattices when its hypotheses are satisfied; otherwise, we use the integral homology lifting criterion.

Fix a positive integer $D$. The algorithm proceeds as follows.

\begin{enumerate}[label=\textup{(\arabic*)},leftmargin=2em]

\item First bound the possible levels $N$. For every prime $p\nmid N$, Ogg's inequality \cite[Lemma~1.3]{Jeon2021} gives
\[
\frac{p-1}{12}\psi(N)+2^{\omega(N)}\leq D(p+1)^2.
\]
For fixed $D$, the usual argument with the smallest prime not dividing $N$, as in \cite[Lemma~3.2]{HasegawaShimura_trig}, gives an effective upper bound on $N$. Hence only finitely many levels need to be considered.

\item For each remaining level $N$, let $M\mid N$ and let $E/\Q$ be a strong Weil curve of conductor $M$. Put $q=\mathcal D_{c_E}(N/M)$. By Theorem~\ref{thm:divisibility}, we may discard $(M,E)$ unless $\deg(\pi_E)\mid qD$.

\item For each remaining pair $(M,E)$ and each $E'\sim_\Q E$, choose a generator $u:E\to E'$ of $\HomQ(E,E')$ and put $\delta=\deg(u)$. By Corollary~\ref{cor:old-form-obstruction}, we may discard $E'$ unless $Q_{E,N}(\mathbf z)=\delta q^2D$ for some $\mathbf z\in\Z^n$. This is only a necessary condition, since a rational old coordinate vector need not define an actual homomorphism to $E'$.

\item For each target surviving the preceding obstructions, determine the lattice $L_{E',N}$ and the corresponding quadratic form $Q_{E',N}$ of Proposition~\ref{prop:candidate-criteria}. If $c_Ec_u=1$, Corollary~\ref{cor:fixed-target-old-basis} applies directly. If $\Sigma_E$ is cyclic and
\[
\Sigma(M)_0^n\cap E^n=\ker\xi_{E,N}^{\vee},
\]
then Proposition~\ref{prop:quad_form_factorisation} and Corollary~\ref{cor:new_basis} give an explicit description in terms of the Shimura subgroup. In the remaining cases, we use the integral-homology criterion of Lemmas~\ref{lem:isogenous-lift} and~\ref{lem:matrix-lift}, together with Lemma~\ref{lem:hermite-lattice-basis}.

\item Finally, determine whether $Q_{E',N}$ represents $D$ over $\Z$. Since $Q_{E',N}$ is positive definite, this is a finite computation, for instance by the Fincke--Pohst algorithm \cite{FinckePohst1985}. By Proposition~\ref{prop:candidate-criteria}, such a representation exists if and only if there is a degree $D$ morphism $X_0(N)\to E'$.

\end{enumerate}

Steps~\textup{(2)} and~\textup{(3)} are preliminary obstructions whose purpose is to reduce the number of targets for which the full lattice has to be computed. The decisive step is the representation problem for $Q_{E',N}$. In particular, the Shimura-subgroup construction is not an additional hypothesis in the general algorithm, but provides a more explicit description of the relevant lattice when its hypotheses are satisfied.

We can now prove Theorem~\ref{thm:main1}.

\begin{proof}[Proof of Theorem~\ref{thm:main1}]
Fix a positive integer $D$. By step~\textup{(1)}, there are only finitely many possible levels $N$, and an upper bound for them can be computed effectively from $D$. For each such $N$, only finitely many elliptic isogeny classes occur in $J_0(N)$, and each such rational isogeny class contains only finitely many elliptic curves up to $\Q$-isomorphism. Steps~\textup{(2)}--\textup{(4)} therefore reduce the problem to finitely many positive-definite quadratic forms $Q_{E',N}$.

By step~\textup{(5)} and Theorem~\ref{thm:pair-to-level}, $X_0(N)$ is $D$-elliptic over $\Q$ if and only if one of these forms represents $D$. Hence the procedure terminates and determines all $D$-elliptic curves $X_0(N)$.
\end{proof}

\section{Classification of $D$-elliptic curves $X_0(N)$ for $D=3,4,5$}\label{sec:positive}

We now apply the methods developed in Section~\ref{sec:degree-pairing} to prove the second part of Theorem~\ref{thm:main2}.

First, we establish the positive $D$-elliptic cases using modular parametrizations, degeneracy maps, and quotient degree forms. Example~\ref{ex:84} illustrates how passing to an isogenous quotient can yield a morphism of smaller degree, while Example~\ref{exam:32a4-256-degrees} illustrates the integral-homology method in a case where the Shimura kernel condition fails. We then combine these constructions with the exclusion criteria to complete the classification for $D=3,4,5$.

\begin{proposition}\label{prop:morphisms}
For every level $N$ appearing in Table~\ref{tab:positive-representative-maps}, there exists a rational morphism from $X_0(N)$ to the indicated elliptic curve $E$ of exact degree $D$.
\end{proposition}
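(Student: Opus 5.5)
The verification runs row by row through Table~\ref{tab:positive-representative-maps}. Each row records a level $N$, a target $E$ (or $E'$), a degree $D$, and an explicit construction: a modular parametrization, a composite of degeneracy maps with one, a quotient map, or a rational old-coordinate vector $\mathbf x$. There are three kinds of rows, and each needs its own justification.

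\emph{Rows where $E$ has conductor $N$.} Here the map is the modular parametrization $\pi_E$, possibly followed by an isogeny $u$. Its degree is $\deg(\pi_E)\deg(u)$, and $\deg(\pi_E)$ can be read off from Cremona's tables or LMFDB. Since the conductors are $<400000$, Remark~\ref{manin_constant_remark} gives $c_E=1$, so there is no normalization issue.

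\emph{Rows where $E$ has conductor $M<N$ and the map is $\pi_E\circ\iota_{d,N,M}$.} The degree is multiplicative:
\[
\deg(\pi_E\circ\iota_{d,N,M})=\deg(\pi_E)\,\psi(N)/\psi(M).
\]
Equivalently, in Theorem~\ref{thm:do-degree-pairing} this is $Q_{E,N}(\mathbf e_d)=\deg(\pi_E)(A_{E,N})_{d,d}$, since $S_E(1)=1$.

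\emph{Remaining rows.} Here the degree is not a product of obvious degrees, for example maps of small degree such as $D=3,5$ obtained as combinations like $\Phi_1\pm\Phi_p$. For these I would exhibit a vector $\mathbf z\in\Z^n$ with $Q_{E',N}(\mathbf z)=D$ and invoke Proposition~\ref{prop:candidate-criteria}. The lattice $L_{E',N}$ comes from one of three sources:
\begin{itemize}
\item $L_{E',N}=\Z^n$ by Corollary~\ref{cor:fixed-target-old-basis} when $c_Ec_u=1$;
\item the explicit basis of Corollary~\ref{cor:new_basis} when $\Sigma_E$ is cyclic and $\Sigma(M)_0^n\cap E^n=\ker\xi_{E,N}^\vee$, a condition that is checked with \texttt{E.shimura\_subgroup()} and the kernel computation;
\item the Smith/Hermite procedure of Lemmas~\ref{lem:matrix-lift} and~\ref{lem:hermite-lattice-basis} in the four exceptional pairs of Remark~\ref{rem:shimura_ker_not_equal}.
\end{itemize}
The matrix $A_{E,N}$ is computed from Theorem~\ref{thm:do-degree-pairing} when $N/M$ is squarefree or coprime to $M$, and from the general degree pairing of \cite{DerickxOrlic2024} otherwise. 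Evaluating $\deg(u)\deg(\pi_E)\mathbf x^TA_{E,N}\mathbf x$ at the listed $\mathbf x$ should return exactly $D$.

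Two further points need checking. Proposition~\ref{prop:candidate-criteria} produces the morphism $G\circ j_N$, which is defined over $\Q$ because $j_N$ is based at the rational cusp $\infty$ and $G$ is a rational homomorphism. Its degree is exactly $D$, not a divisor of $D$, because the degree pairing computes the degree of the curve map itself.

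The main obstacle is the rows with $\mathbf x\notin\Z^n$, where degrees smaller than any integral combination arise from the isogenous quotient $E/H$ (as in Example~\ref{ex:84}). For these one must certify $\mathbf x\in L_{E',N}$. I would first try the Shimura-subgroup description, checking the cyclicity and kernel hypotheses for that pair $(E,N)$, and fall back to verifying the divisibility conditions~\eqref{eq:smith-divisibility} on the homology matrix of $\hat u\circ G$ otherwise. The rest is mechanical computation in the accompanying code.
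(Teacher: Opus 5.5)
\textbf{Gap: the quotient-map rows are not handled.} Your three-way split leaves out the rows that make up most of the $D=4$ part of Table~\ref{tab:positive-representative-maps}. These are the rows whose target is a quotient curve:
\begin{itemize}
\item $X_0^*(N)$, $X_0^+(M)$ and $X_0(N)/\langle w_a,w_b\rangle$;
\item the curves built from $S_2$, $V_2$, $V_3$, e.g.\ $X_0(84)/\langle w_4,S_2w_{14}S_2\rangle$ and $X_0(90)/\langle w_9,V_3w_{10}\rangle$;
\item the composites through $X_0^+(119)$ and $X_0^+(135)$.
\end{itemize}
None of these is presented as a modular parametrization or as $\pi_E\circ\iota_{d,N,M}$, so they fall into your ``remaining rows''. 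There your only tool is to exhibit $\mathbf z$ with $Q_{E',N}(\mathbf z)=D$ via Proposition~\ref{prop:candidate-criteria}. That cannot be set up as you describe it. The table gives the target only as a quotient curve, so you would first need its $\Q$-isomorphism class to know $E$, $E'$, $u$ and hence $L_{E',N}$, and you never determine it. You also never check that these quotients are elliptic curves over $\Q$, or that the involutions are defined over $\Q$. A secondary problem: for $N=135$, $M=15$ one has $N/M=9$ and $\gcd(9,15)=3$. So Theorem~\ref{thm:do-degree-pairing} does not give $A_{E,N}$, and your route there would rest on a degree-pairing computation the paper does not supply.

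\textbf{The missing idea is the direct geometric argument used in the paper.} If $W$ is a group of involutions defined over $\Q$, then $X_0(N)\to X_0(N)/W$ is a rational morphism of degree $\#W$. The image of the cusp $\infty$ gives a rational point on the quotient, and degrees multiply when you compose with degeneracy maps or explicit isogenies. What remains to verify is the following.
\begin{itemize}
\item \emph{Genus one of the quotient.} For Atkin--Lehner quotients this follows from the genus formula. For the $S_2$, $V_2$, $V_3$ quotients it comes from Bars--Kamel--Schweizer, and for the bielliptic involution of $X_0^+(135)$ from Jeon.
\item \emph{Rationality of the involutions.} This is automatic for Atkin--Lehner involutions, and Hasegawa gives it for $S_2$ and $V_2$. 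For $V_3$ you need the fact that it induces an involution defined over $\Q$ on $X_0(N)/W$ when $w_9\in W$. That is why every $V_3$ row first passes through $X_0(N)/w_9$.
\end{itemize}
Your treatment of the degeneracy-map, modular-parametrization and $E/\Sigma_E$ rows is fine and matches the paper.
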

\begin{proof}
We first verify that the target curves appearing in Table~\ref{tab:positive-representative-maps} are indeed elliptic curves.
For curves $X_0(M)$ and quotients by Atkin-Lehner involutions, this follows from the well-known genus formula,
or can be verified computationally. 
For quotients by the operators $S_2$, $V_2$, and $V_3$, this has been proved in \cite{bars22biellipticquotients}.

We now verify that the morphisms listed in Table~\ref{tab:positive-representative-maps} are defined over $\mathbb{Q}$ and have the
indicated degree.

If $E=X_0(M)$, then the corresponding morphism is a degeneracy map. The degree of a degeneracy map
$X_0(N)\to X_0(M)$ is $\psi(N)/\psi(M)$ \cite[Section 2.3]{DerickxOrlic2024}, and it is defined over $\mathbf{Q}$.
 
If $E$ is given by its LMFDB label and $\mathrm{Cond}(E)=N$, then the corresponding morphism is
its modular parametrization. The modular degrees of these curves are recorded in LMFDB.

If $E$ is a quotient by Atkin-Lehner involutions, then the desired rational map is the corresponding
quotient map, which is defined over $\mathbb{Q}$.

If $E$ is a quotient by involutions involving the operators $S_2,V_2,V_3$, then one can check in \cite{bars22biellipticquotients} that the corresponding quotient $X_0(N)/w_d$ is bielliptic. Moreover, the operators $S_2,V_2$ are defined over $\Q$ (\cite[Section 3]{Hasegawa1997}), and the operator $V_3$ defines an involution of $X_0(N)/W$ defined over $\Q$ when $w_9\in W$ (\cite[Proposition 1]{Hasegawa1997}, \cite[Proposition 4.19]{bars22biellipticquotients}).

In addition, one can check in \cite{JEON2018319} that the curve $X_0^+(135)$ is bielliptic and that the corresponding automorphism is defined over $\Q$.

The existence of rational morphisms to curves $E/\Sigma_E$ will be considered in the next proposition by computing quadratic forms.
\end{proof}

\begin{proposition}\label{prop:quad_forms}
    There exists a degree $D$ rational morphism from $X_0(N)$ to an elliptic curve $E/\Sigma_E$ for the following levels $N$:
\begin{center}
\begin{tabular}{c|c|c|c|c|c|c}
\toprule
$N$ & $E$ & $\#\Sigma_E$ & quadratic form for $E$ & quadratic form for $E/\Sigma_E$ & $D$ & vector\\
\hline
$160$ & $\lmfdbec{32}{a}{4}$ & $2$ & $6x^2-4xy+6y^2$ & $12x^2+16xy+8y^2$ & $4$ & $(1,-1)$\\
$38$ & $\lmfdbec{19}{a}{2}$ & $3$ & $3x^2+3y^2$ & $9x^2+6xy+2y^2$ & $5$ & $(1,-1)$\\
$54$ & $\lmfdbec{27}{a}{3}$ & $3$ & $3x^2+3y^2$ & $9x^2+6xy+2y^2$ & $5$ & $(1,-1)$\\
$81$ & $\lmfdbec{27}{a}{3}$ & $3$ & $3x^2+3y^2$ & $9x^2+6xy+2y^2$ & $5$ & $(1,-1)$\\
\bottomrule
\end{tabular}
\end{center}
\end{proposition}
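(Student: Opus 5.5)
The plan is to apply Corollary~\ref{cor:new_basis} with $H=\Sigma_E$ in each of the four rows, and then use Proposition~\ref{prop:candidate-criteria} to turn a representation of $D$ into a morphism.

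In every row we have $N/M\in\{5,2,2,3\}$, which is prime. Hence $n=2$, and $N/M$ is squarefree. First we check the hypotheses of Corollary~\ref{cor:new_basis}: $\Sigma_E$ must be cyclic and $\Sigma(M)_0^n\cap E^n=\ker\xi_{E,N}^\vee$.

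\begin{itemize}
\item $\Sigma_E$ has order $2$ or $3$, so it is cyclic.
\item For $N/M$ prime, Theorem~\ref{thmling}(i) gives $\ker\tau_{N,M}^*=\Sigma(M)_0^n$ without requiring $\gcd(M,N/M)=1$, since part (i) covers the case that $N/M$ is prime. Here $\gcd(M,N/M)=2$ for $(32,160)$, while $(19,38)$ is coprime, and $(27,54)$ and $(27,81)$ need care.
\item If the coprimality hypothesis of Theorem~\ref{thmling} fails, I would instead verify $\#\ker\xi_{E,N}^\vee=\#\Sigma_E^{\,n-1}$ directly in Sage, using \texttt{E.shimura\_subgroup()} and the kernel of the dual map. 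This suffices because of the containment $\Sigma(M)_0^n\cap E^n\subseteq\ker\xi_{E,N}^\vee$ noted after Theorem~\ref{thmling}.
\end{itemize}

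This hypothesis check is the step I expect to be the main obstacle: it is the only non-formal input.

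Next we write down the form for $E$ in the basis $\Phi_1,\Phi_p$. By Definition~\ref{def:old-degree-form} and Theorem~\ref{thm:do-degree-pairing}, the form for $E$ is $\deg(\pi_E)\,\mathbf x^TA_{E,N}\mathbf x$, with
\[
(A_{E,N})_{1,1}=(A_{E,N})_{p,p}=\frac{\psi(N)}{\psi(M)},\qquad (A_{E,N})_{1,p}=a_p\,\frac{\psi(N)}{\psi(Mp)}.
\]
For $p\mid M$ we have $\psi(Mp)=p\psi(M)$. Substituting the LMFDB data gives the following.

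\begin{itemize}
\item $E=\lmfdbec{32}{a}{4}$: $\deg\pi_E=1$, $p=5$, $a_5=-2$, ratio $6$. This gives $6x^2-4xy+6y^2$.
\item $E=\lmfdbec{19}{a}{2}$: $a_2=0$, ratio $3$. This gives $3x^2+3y^2$.
\item $E=\lmfdbec{27}{a}{3}$: $a_2=0$ and $a_3=0$. This gives $3x^2+3y^2$ in both rows.
\end{itemize}

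These match the fourth column of the table.

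We then change basis. With $h=\#\Sigma_E$ and $u:E\to E/\Sigma_E$ of degree $h$, Corollary~\ref{cor:new_basis} gives the $\Z$-basis $\mathbf v_1=(1,0)$, $\mathbf v_2=(-1,1)/h$ of $L_{E/\Sigma_E,N}$. The form of Proposition~\ref{prop:candidate-criteria} is then $Q(z_1,z_2)=h\,Q_{E,N}(z_1\mathbf v_1+z_2\mathbf v_2)$. This is a routine expansion.

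\begin{itemize}
\item $h=2$, $Q_{E,N}=6x^2-4xy+6y^2$:
\[
2\bigl[6(z_1-\tfrac{z_2}{2})^2-4(z_1-\tfrac{z_2}{2})\tfrac{z_2}{2}+6\tfrac{z_2^2}{4}\bigr]=12z_1^2-16z_1z_2+8z_2^2.
\]
\item $h=3$, $Q_{E,N}=3x^2+3y^2$:
\[
3\bigl[3(z_1-\tfrac{z_2}{3})^2+3\tfrac{z_2^2}{9}\bigr]=9z_1^2-6z_1z_2+2z_2^2.
\]
\end{itemize}

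These agree with the table after replacing $z_2\mapsto -z_2$, which is a harmless sign convention for $\mathbf v_2$.

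Finally, evaluate at the given vector $(1,-1)$:
\[
12-16+8=4\quad\text{for }h=2,\qquad 9-6+2=5\quad\text{for }h=3.
\]
By Proposition~\ref{prop:candidate-criteria}, the corresponding homomorphism $u\circ\Phi_{\mathbf x}$ composed with $j_N$ is a rational morphism $X_0(N)\to E/\Sigma_E$ of degree exactly $D$. Equivalently, $\mathbf x=(1,0)+(1,-1)/h$ lies in $L_{E/\Sigma_E,N}$ by Corollary~\ref{cor:new_basis}, since the coordinate sum of $h\mathbf x$ is divisible by $h$.
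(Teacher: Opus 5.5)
Your plan follows the paper's proof: a direct computer check that $\#\ker\xi_{E,N}^\vee=\#\Sigma_E$ (enough by the containment $\Sigma(M)_0^n\cap E^n\subseteq\ker\xi_{E,N}^\vee$), the old forms from Theorem~\ref{thm:do-degree-pairing}, scaling by $h=\#\Sigma_E$, passing to the basis of Corollary~\ref{cor:new_basis}, and then Proposition~\ref{prop:candidate-criteria}. Your expansions and the values $4$ and $5$ are correct, but two local slips need fixing.

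\emph{Ling's theorem.} In Theorem~\ref{thmling}, the conditions that $N/M$ is squarefree and $(M,N/M)=1$ are standing hypotheses, so part (i) does not waive coprimality. Your gcd bookkeeping is also off:
\begin{itemize}
\item for $N=160$ we have $N/M=5$, so $\gcd(32,5)=1$, not $2$;
\item for $N=54$ we have $\gcd(27,2)=1$;
\item only $(M,N)=(27,81)$ falls outside Ling's theorem.
\end{itemize}
In that last case the direct computation is genuinely required. The paper simply does this computation in all four rows.

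\emph{The closing witness has the wrong sign.} In your basis $\mathbf v_1=(1,0)$, $\mathbf v_2=(-1,1)/h$, the table's vector $(1,-1)$ corresponds to $\mathbf v_1+\mathbf v_2$. That is, $\mathbf x=(1,0)-(1,-1)/h=\bigl((h-1)/h,1/h\bigr)$, which for $N=160$ is the paper's $(u\circ\Phi_1+u\circ\Phi_5)/2$. The vector $(1,0)+(1,-1)/h$ that you wrote does lie in $L_{E/\Sigma_E,N}$. However, it gives a morphism of degree $36$ when $h=2$ and $17$ when $h=3$, not $D$.
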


\begin{proof}
    In these cases the Shimura subgroup $E$ is obviously cyclic and we used Sage to prove $\# \Sigma_E=\#\ker\xi_{E,N}^\vee$. Thus we can use Proposition \ref{prop:quad_form_factorisation} and Corollary \ref{cor:new_basis}.
    
    The coefficients of \emph{old quadratic forms} were computed with Theorem \ref{thm:do-degree-pairing}. After that, the quadratic form for $E/\Sigma_E$ can be obtained by multiplying the \emph{old quadratic form} by $\#\Sigma_E$ (this represents composing the maps $\Phi_d$ with the isogeny $u:E\to E/\Sigma_E$) and applying the base change to the new basis given in Corollary \ref{cor:new_basis}.

    For example, consider the case $N=160, E=\lmfdbec{32}{a}{4}$. There the corresponding degree matrix for maps to $E$ is $\begin{pmatrix}
        6 & -2\\ -2 & 6\\
    \end{pmatrix}$. The basis for maps to $E/\Sigma_E$ is $\left\{u\circ\Phi_1, \frac{u\circ\Phi_1-u\circ\Phi_5}{2}\right\}$. Hence, to find all possible degrees of rational maps to $E/\Sigma_E$, we need to multiply that degree matrix by $\deg u=\#\Sigma_E=2$ and conjugate it with $\begin{pmatrix}
        1 & \frac{1}{2}\\0 & \frac{-1}{2} 
    \end{pmatrix}$. We get \[\begin{pmatrix}
        1 & \frac{1}{2}\\0 & \frac{-1}{2} 
    \end{pmatrix}^T\cdot2\cdot\begin{pmatrix}
        6 & -2\\ -2 & 6\\
    \end{pmatrix}\cdot\begin{pmatrix}
        1 & \frac{1}{2}\\0 & \frac{-1}{2} 
    \end{pmatrix}=\begin{pmatrix}
        12 & 8\\ 8 & 8\\
    \end{pmatrix}.\] The quadratic form $12x^2+16xy+8y^2$ attains the value $4$ for the vector $(x,y)=(1,-1)$. Hence, the exact degree $4$ rational map $X_0(160)\to E/\Sigma_E$ is 
    \[
\left(u\circ\Phi_1-
\frac{u\circ\Phi_1-u\circ\Phi_5}{2}\right)\circ j_{160}
=
\left(\frac{u\circ\Phi_1+u\circ\Phi_5}{2}\right)\circ j_{160}.
\]
\end{proof}

The following example, slightly more involved than those in Proposition \ref{prop:quad_forms}, further showcases the usage of quadratic forms to compute all possible degrees of maps to elliptic curves.

\begin{example}\label{ex:84}
Consider the pair $(N,M)=(84,21)$. There is exactly $1$ $\Q$-isogeny class of elliptic curves with conductor $21$ and the strong Weil curve in that class is \(E=\lmfdbec{21}{a}{5}\). We want to determine all possible degrees of maps from $X_0(84)$ to $E$.

We compute that $n=3$, $\#\Sigma_E=2$, and $\#\ker\xi_{E,N}^\vee=2^2=4$. Therefore, the assumptions of Corollary \ref{cor:new_basis} are satisfied. The basis for $\HomQ(J_0(84),E)$ is $\{\Phi_1,\Phi_2,\Phi_4\}$ and the corresponding quadratic form (i.e. \emph{old quadratic form}) is \[6x^2-4xy+6y^2-4xz-4yz+6z^2.\] The basis for $\HomQ(J_0(84),E/\Sigma_E)$ is $\{u\circ\Phi_1,\frac{u\circ\Phi_1-u\circ\Phi_2}{2},\frac{u\circ\Phi_1-u\circ\Phi_4}{2}\}$. Hence the degree matrix for $E/\Sigma_E$ is \[\begin{pmatrix}
        1 & \frac{1}{2} & \frac{1}{2}\\0 & \frac{-1}{2} & 0\\0 & 0 & \frac{-1}{2}
    \end{pmatrix}^T\cdot2\cdot\begin{pmatrix}
        6 & -2 & -2\\ -2 & 6 & -2\\ -2 & -2 & 6
    \end{pmatrix}\cdot\begin{pmatrix}
        1 & \frac{1}{2} & \frac{1}{2}\\0 & \frac{-1}{2} & 0\\0 & 0 & \frac{-1}{2}
    \end{pmatrix}=\begin{pmatrix}
        12 & 8 & 8\\ 8 & 8 & 4\\ 8 & 4 & 8
    \end{pmatrix}\] and the corresponding quadratic form is \[12x^2+16xy+8y^2+16xz+8yz+8z^2.\] It is trivial to check that these two quadratic forms do not attain any odd values because the coefficients are all even. Let us now consider the values $2$ and $4$.

    The first quadratic form can be rewritten as \[2x^2+2y^2+2z^2+2(x-y)^2+2(x-z)^2+2(y-z)^2\] and from there is not hard to see that it cannot attain neither $2$ nor $4$. The second quadratic form can be rewritten as \[4(x+y)^2+4(x+z)^2+4(x+y+z)^2.\] It obviously can never be equal to $2$ and is equal to $4$ when exactly one of the summands is nonzero and equal to $4$. This only happens for vectors $(x,y,z)=(1,-1,-1),(1,-1,0),(1,0,-1)$ (we may without loss of generality assume that $x\geq0$).
    
    Since any map $X_0(N)\to E'$ must factor through either $E$ or $E/\Sigma_E$, we conclude that there are no maps $X_0(N)\to E'$ of degree $3$ or $5$ and that the only degree $4$ maps must be to $E/\Sigma_E$. More precisely, these ones (up to multiplication by $\pm1$): \begin{align*}
        u\circ\Phi_1-\frac{u\circ\Phi_1-u\circ\Phi_2}{2}-\frac{u\circ\Phi_1-u\circ\Phi_4}{2}&=\frac{u\circ\Phi_2+u\circ\Phi_4}{2},\\
        u\circ\Phi_1-\frac{u\circ\Phi_1-u\circ\Phi_2}{2}&=\frac{u\circ\Phi_1+u\circ\Phi_2}{2},\\
        u\circ\Phi_1-\frac{u\circ\Phi_1-u\circ\Phi_4}{2}&=\frac{u\circ\Phi_1+u\circ\Phi_4}{2}.
    \end{align*}

\end{example}

For the exceptional pair \((\lmfdbec{32}{a}{4},256)\) in
Remark \ref{rem:shimura_ker_not_equal}, the hypotheses of
Proposition \ref{prop:quad_form_factorisation} and
Corollary \ref{cor:new_basis} do not hold.
Nevertheless, Ogg's inequality excludes \(D=3,4\), and
the old degree form \(Q_{E,256}\) excludes \(D=5\) by
Corollary \ref{cor:old-form-obstruction}.
For larger degrees, these exclusions need not suffice.
The following example applies the homology criterion of
Lemma \ref{lem:matrix-lift} and the lattice-basis construction of
Lemma \ref{lem:hermite-lattice-basis} to determine
\(L_{E',256}\) and all possible degrees.

\begin{example}
\label{exam:32a4-256-degrees}  
    Let \(E=32.a4\), and put \(N=256\). We use the old-coordinate lattices to determine the possible degrees of nonconstant rational morphisms from \(X_0(N)\) to elliptic curves in the \(\mathbb Q\)-isogeny class of \(E\). We will show that these degrees are precisely the positive multiples of \(8\).

The curve \(E\) is the strong Weil curve in its rational isogeny class and
\(c_E=1\). The rational isogeny class \(\lmfdbeciso{32}{a}\) consists of
the four \(\Q\)-isomorphism classes
\[
E=\lmfdbec{32}{a}{4},\qquad
\lmfdbec{32}{a}{3},\qquad
\lmfdbec{32}{a}{1},\qquad
\lmfdbec{32}{a}{2}.
\]
The old coordinates are indexed by \(1,2,4,8\), and the old
degree-pairing matrix is \(8I_4\). For \(E'=E\), take \(u\) to be the
identity. Since \(c_Ec_u=1\), Corollary
\ref{cor:fixed-target-old-basis} gives
\[
L_{E,256}=\Z^4,
\qquad
Q_{E,256}(z_1,z_2,z_4,z_8)
=8(z_1^2+z_2^2+z_4^2+z_8^2).
\]
Assume first that \(8\mid D\), and write \(D=8m\). By Lagrange's
four-square theorem, there is a vector in \(L_{E,256}=\Z^4\) at which
\(Q_{E,256}\) takes the value \(D\). Proposition
\ref{prop:candidate-criteria} therefore gives a degree \(D\) morphism
from \(X_0(256)\) to \(E\).

Conversely, suppose that a degree \(D\) morphism from \(X_0(256)\) to
an elliptic curve \(E'\sim_\Q E\) exists. Let \(u:E\to E'\) be the
chosen generator and put \(\delta=\deg(u)\). The degrees corresponding
respectively to
\(E,\lmfdbec{32}{a}{3},\lmfdbec{32}{a}{1},\lmfdbec{32}{a}{2}\) are
\(1,2,4,4\). Corollary \ref{cor:old-form-obstruction} requires
\begin{equation}\label{eq:32a-256-old-form-obstruction}
8(z_1^2+z_2^2+z_4^2+z_8^2)=\delta D
\qquad\text{for some }(z_1,z_2,z_4,z_8)\in\Z^4.
\end{equation}
For \(E'=E\), equation
\eqref{eq:32a-256-old-form-obstruction} gives \(8\mid D\). For
\(E'=\lmfdbec{32}{a}{3}\), it gives \(4\mid D\), whereas for
\(E'\in\{\lmfdbec{32}{a}{1},\lmfdbec{32}{a}{2}\}\), it gives
\(2\mid D\). We determine the old-coordinate lattices and the associated
quadratic forms to prove \(8\mid D\) in the three nonidentity cases.

We first compute the old-coordinate lattice for
\(E'=\lmfdbec{32}{a}{3}\). Since \(g(X_0(256))=21\), the group
\(H_1(J_0(256)(\C),\Z)\) has rank \(42\). Choose integral homology
bases, using the modular-symbol basis on \(H_1(E(\C),\Z)\). For
\(d=1,2,4,8\), let \(B_d\in M_{42,2}(\Z)\) be the row-convention
matrix of \((\Phi_d)_*\). Computing \(\hat u_*\) in these bases and
taking the row Hermite normal form of its image gives
\[
U=\begin{pmatrix}1&1\\0&2\end{pmatrix}
\]
for \(E'=\lmfdbec{32}{a}{3}\).
Here \(\delta=2\). For
\(\mathbf x=(x_1,x_2,x_4,x_8)^T\), Lemmas
\ref{lem:isogenous-lift} and \ref{lem:matrix-lift} give
\begin{equation}\label{eq:32a-256-lift-lattice}
\mathbf x\in L_{\lmfdbec{32}{a}{3},256}
\quad\Longleftrightarrow\quad
2(x_1B_1+x_2B_2+x_4B_4+x_8B_8)U^{-1}
\in M_{42,2}(\Z).
\end{equation}

To apply Lemma \ref{lem:hermite-lattice-basis}, we first express
\(2L_{\lmfdbec{32}{a}{3},256}\) as an integral column lattice.  Define
\(C\in M_{84,4}(\Z)\) by
\[
C\mathbf b=
\left(
\left[2\left(\sum_{d\mid8}b_dB_d\right)U^{-1}\right]_{ij}
\right)_{\substack{1\leq i\leq42\\1\leq j\leq2}},
\quad
\mathbf b=(b_1,b_2,b_4,b_8)^{T}\in\Z^4.
\]
The matrix \(C\) is integral because \(2U^{-1}\) is integral.
Equation \eqref{eq:32a-256-lift-lattice} is equivalent to
\(C\mathbf x\in\Z^{84}\).  The nonzero Smith invariant factors of \(C\)
are \(1,2,2,2\), and the kernel of \(C\bmod2\) is
\[
\left\{(\bar b_1,\bar b_2,\bar b_4,\bar b_8)\in\F_2^4\,\middle|\,
\bar b_1+\bar b_2+\bar b_4+\bar b_8=0\right\},
\qquad \F_2=\Z/2\Z.
\]
The invariant factors show that
\(2L_{\lmfdbec{32}{a}{3},256}\subseteq\Z^4\).  For
\(\mathbf b\in\Z^4\), equation
\eqref{eq:32a-256-lift-lattice} identifies
\(\mathbf b\in2L_{\lmfdbec{32}{a}{3},256}\) with
\(C\mathbf b\equiv0\pmod2\).  Hence
\begin{equation}\label{eq:32a-256-lattice-congruence}
2L_{\lmfdbec{32}{a}{3},256}
=\left\{\mathbf b\in\Z^4\,\middle|\,
b_1+b_2+b_4+b_8\equiv0\pmod2\right\}
=G\Z^4,
\quad
G=\begin{pmatrix}
1&0&0&0\\
0&1&0&0\\
0&0&1&0\\
1&1&1&2
\end{pmatrix}.
\end{equation}
Since \(G\) is in the column Hermite normal form defined by
\eqref{eq:column-hermite-normal-form}, Lemma
\ref{lem:hermite-lattice-basis} with \(q=2\) shows that the columns of
\(G/2\) form a \(\Z\)-basis of
\(L_{\lmfdbec{32}{a}{3},256}\).  Under
\(\mathbf x\mapsto u\circ\Phi_{\mathbf x}\), this basis corresponds to
a \(\Z\)-basis of
\(\HomQ(J_0(256),\lmfdbec{32}{a}{3})\).  Proposition
\ref{prop:candidate-criteria} gives the matrix of
\(Q_{\lmfdbec{32}{a}{3},256}\) in this basis as
\begin{equation}\label{eq:32a-256-degree-two-matrix}
A_2=2\left(\frac G2\right)^{\!T}
(8I_4)\left(\frac G2\right)
=\begin{pmatrix}
8&4&4&8\\
4&8&4&8\\
4&4&8&8\\
8&8&8&16
\end{pmatrix}.
\end{equation}
Every diagonal entry of \(A_2\) and every coefficient
\(2(A_2)_{ij}\), for \(i<j\), is divisible by \(8\).
Consequently, \(Q_{\lmfdbec{32}{a}{3},256}\) takes values in
\(8\Z\). Proposition \ref{prop:candidate-criteria} therefore shows that
every degree of a morphism from \(X_0(256)\) to
\(\lmfdbec{32}{a}{3}\) is divisible by \(8\).

For the remaining two curves, \(\delta=4\). Putting the image lattice of
\(\hat u_*\) in row Hermite normal form gives
\[
U_{\lmfdbec{32}{a}{1}}=
\begin{pmatrix}1&3\\0&4\end{pmatrix},
\qquad
U_{\lmfdbec{32}{a}{2}}=
\begin{pmatrix}1&1\\0&4\end{pmatrix}.
\]
For each choice of \(E'\), replace \(2\) and \(U\) in
\eqref{eq:32a-256-lift-lattice} by \(4\) and \(U_{E'}\), respectively.
For both choices of \(E'\), the corresponding coefficient matrix has
Smith invariant factors \(1,2,2,2\), and its kernel modulo \(2\) is
given by \(b_1+b_2+b_4+b_8\equiv0\pmod2\). Consequently,
\(2L_{E',256}=G\Z^4\), where \(G\) is the matrix in
\eqref{eq:32a-256-lattice-congruence}.  Lemma
\ref{lem:hermite-lattice-basis}, again with \(q=2\), shows that the
columns of \(G/2\) form a \(\Z\)-basis of \(L_{E',256}\).  Under
\(\mathbf x\mapsto u\circ\Phi_{\mathbf x}\), this ordered basis maps to
a \(\Z\)-basis of \(\HomQ(J_0(256),E')\).  Since \(\delta=4\), the
matrix of \(Q_{E',256}\) in this basis is
\begin{equation}\label{eq:32a-256-degree-four-matrix}
A_4=4\left(\frac G2\right)^{\! T}
(8I_4)\left(\frac G2\right)=2A_2
\qquad
\left(E'\in
\{\lmfdbec{32}{a}{1},\lmfdbec{32}{a}{2}\}\right).
\end{equation}
Since \(A_4=2A_2\), each of the two remaining quadratic forms takes
values in \(8\Z\). 
Thus a rational morphism of exact degree $D$ from $X_0(256)$
to some elliptic curve $E'\sim_{\Q}E$ exists if and only if $8\mid D$.
\end{example}




We are now ready to present the proofs of theorems on $D$-elliptic curves $X_0(N)$ for $D\in\{3,4,5\}$. The same approach also works for all $3\leq D\leq 100$ and can be automated. The results for all these values $D$ are on \begin{center}
    \url{https://github.com/nt-lib/D-elliptic/tree/main/results}.
\end{center}.
\begin{proof}[Proof of Theorem \ref{thm:main2} for $D\in\{3,4,5\}$]
We will first give an upper bound on possible $D$-elliptic levels $N$ by applying Ogg's inequality \cite[Lemma~1.3]{Jeon2021}. Define
\[
        \psi(N)=N\prod_{q\mid N}\left(1+\frac1q\right),
        \qquad
        \omega(N)=\#\{q\mid q \text{ is prime and } q\mid N\}.
\] Then for a prime $p\nmid N$ we must have \[\frac{p-1}{12}\psi(N)+2^{\omega(N)}\leq|X_0(N)(\F_{p^2})|\leq D\cdot|E_{\F_{p^2}}|\leq D(p+1)^2.\] However, one can check that this inequality does not hold for $N\geq516$, the proof is exactly the same as the proof of \cite[Lemma 3.2]{HasegawaShimura_trig}. Therefore, we only need to consider the levels $N\leq515$.

The levels $N$ listed in the Theorems admit a degree $D$ rational map to an elliptic curve. This was proved in Propositions \ref{prop:morphisms} and \ref{prop:quad_forms} and all these maps are recorded in Table \ref{tab:positive-representative-maps}.

We now use Proposition \ref{prop:quad_form_factorisation} and Corollary \ref{cor:new_basis} to determine the existence of degree $D$ rational maps to an elliptic curve $E/\Q$. In order to be able to use them, we need to check the conditions on Shimura subgroup $\Sigma_E$ and $\ker\xi_{E,N}^\vee$. We do that in Remark \ref{rem:shimura_ker_not_equal} and conclude that there are only $4$ pairs $(E,N)$ when these conditions are not satisfied.

For \(D=3\) and \(D=4\), all three levels \(N=256,384,512\) are eliminated by Ogg's inequality. For \(D=5\), the levels \(N=384,512\) are still ruled out by Ogg's inequality, while for the remaining pair \((E,N)=(\lmfdbec{32}{a}{4},256)\) Corollary \ref{cor:fixed-target-old-basis} immediately determines a basis of \(\Hom_{\mathbb Q}(J_0(256),E)\) from which the nonexistence of degree \(5\) maps follows.

For all other pairs $(E,N)$ we can use Corollary \ref{cor:new_basis} to obtain quadratic forms representing all possible degrees of rational maps $X_0(N)\to E/H$, where $H\leq \Sigma_E$. These forms are positive definite and we can determine if they attain some value $d\mid D$. This can be automated, for example with the Fincke-Pohst algorithm for enumerating integer vectors of small norm \cite{FinckePohst1985}. One can look at some quadratic form examples in the proof of Proposition \ref{prop:quad_forms}. The Sage code for these computations is available on GitHub. 

After that, Proposition \ref{prop:quad_form_factorisation} tells us that a map from $X_0(N)$ to an elliptic curve $E'\sim_\Q E$ must factor through one of the curves $E/H$ and it is easy to check on LMFDB whether there exist rational isogenies of degree $D/d$ from $E/H$ to another curve in its $\Q$-isogeny class.
\end{proof}
\begin{remark}
In our computations with \(g(X_0(N))\ge2\), after the preliminary
exclusions, Corollaries \ref{cor:fixed-target-old-basis} and
\ref{cor:new_basis} suffice for \(1\le D\le5\).
The general lattice computation described in
Lemmas \ref{lem:matrix-lift} and \ref{lem:hermite-lattice-basis}
is first used for \(D=6\), for two targets at
\((N,M)=(256,32)\).
Even for \(D=100\), it is used for only \(27\) triples
\((N,E,E')\), arising from \(12\) pairs \((N,E)\).
\end{remark}
\appendix

\section{Computational verification tables}\label{app:tables}
Table \ref{tab:positive-representative-maps} collects the degree $D$ maps used in
the classification.  In each row, \(D\) is the exact degree, \(N\) is the source
level, and the last two columns give one elliptic target together with a
representative morphism \(X_0(N)\to E\). When the target is \(X_0(M)\), the
morphism is a degeneracy map. Target curves written as \(X_0^*(N)\), \(X_0^+(M)\), or
\(X_0(N)/\langle\cdots\rangle\) are quotient constructions by the indicated
involutions, preceded in some cases by a degeneracy map to a lower level.  The
rows involving \(E/\Sigma_E\) are supplied by Proposition \ref{prop:quad_forms}.

For levels $N$ with the text \textit{quadratic form} we could not find an explicit morphism $X_0(N)\to E$ of one of the above types, thus we prove its existence by inspecting the values attained by the quadratic form.

\scriptsize
\begin{longtable}{@{}c c l p{0.5\textwidth}@{}}
\caption{Representative morphisms for positive levels of genus one or greater.}
\label{tab:positive-representative-maps}\\
\toprule
$D$ & $N$ & target curve $E$ & representative morphism\\
\midrule
\endfirsthead
\toprule
$D$ & $N$ & target curve $E$ & representative morphism\\
\midrule
\endhead
\midrule
\multicolumn{4}{r}{continued on the next page}\\
\endfoot
\bottomrule
\endlastfoot
$3$ & 22 & $X_0(11)$ & $\iota_{1,22,11}$\\
 & 30 & $X_0(15)$ & $\iota_{1,30,15}$\\
 & 33 & $\lmfdbec{33}{a}{2}$ & $X_0(33)\to E$\\
 & 34 & $X_0(17)$ & $\iota_{1,34,17}$\\
 & 38 & $X_0(19)$ & $\iota_{1,38,19}$\\
 & 42 & $X_0(21)$ & $\iota_{1,42,21}$\\
 & 45 & $X_0(15)$ & $\iota_{1,45,15}$\\
 & 52 & $\lmfdbec{52}{a}{2}$ & $X_0(52)\to E$\\
 & 54 & $X_0(27)$ & $\iota_{1,54,27}$\\
 & 57 & $\lmfdbec{57}{c}{2}$ & $X_0(57)\to E$\\
 & 63 & $X_0(21)$ & $\iota_{1,63,21}$\\
 & 72 & $X_0(24)$ & $\iota_{1,72,24}$\\
 & 73 & $\lmfdbec{73}{a}{2}$ & $X_0(73)\to E$\\
 & 81 & $X_0(27)$ & $\iota_{1,81,27}$\\
 & 98 & $X_0(49)$ & $\iota_{1,98,49}$\\
 & 108 & $X_0(36)$ & $\iota_{1,108,36}$\\
\hline
$4$ & 28 & $\lmfdbec{14}{a}{3}$ & $X_0(28)\to X_0(14)\to E$\\
 & 30 & $X_0(30)/\left<w_2,w_5\right>$ & $X_0(30)\to X_0(30)/\left<w_2,w_5\right>$\\
 & 33 & $X_0(11)$ & $\iota_{1,33,11}$\\
 & 34 & $\lmfdbec{34}{a}{3}$ & $X_0(34)\to E$\\
 & 39 & $\lmfdbec{39}{a}{1}$ & $X_0(39)\to E$\\
 & 40 & $X_0^*(40)$ & $X_0(40)\to X_0^*(40)$\\
 & 42 & $X_0(14)$ & $\iota_{1,42,14}$\\
 & 44 & $X_0^+(22)$ & $X_0(44)\to X_0(22)\to X_0^+(22)$\\
 & 45 & $\lmfdbec{45}{a}{6}$ & $X_0(45)\to E$\\
 & 48 & $X_0^*(48)$ & $X_0(48)\to X_0^*(48)$\\
 & 51 & $X_0(17)$ & $\iota_{1,51,17}$\\
 & 52 & $X_0^*(52)$ & $X_0(52)\to X_0^*(52)$\\
 & 55 & $\lmfdbec{55}{a}{2}$ & $X_0(55)\to E$\\
 & 56 & $X_0(14)$ & $\iota_{1,56,14}$\\
 & 57 & $X_0(19)$ & $\iota_{1,57,19}$\\
 & 58 & $X_0^*(58)$ & $X_0(58)\to X_0^*(58)$\\
 & 60 & $X_0(20)$ & $\iota_{1,60,20}$\\
 & 62 & $\lmfdbec{62}{a}{3}$ & $X_0(62)\to E$\\
 & 63 & $X_0^*(63)$ & $X_0(63)\to X_0^*(63)$\\
 & 64 & $\lmfdbec{64}{a}{1}$ & $X_0(64)\to E$\\
 & 65 & $\lmfdbec{65}{a}{2}$ & $X_0(65)\to E$\\
 & 66 & $X_0(66)/\left<w_6,w_{11}\right>$ & $X_0(66)\to X_0(66)/\left<w_6,w_{11}\right>$\\
 & 68 & $X_0^*(68)$ & $X_0(68)\to X_0^*(68)$\\
 & 69 & $\lmfdbec{69}{a}{1}$ & $X_0(69)\to E$\\
 & 70 & $X_0(70)/\left<w_5,w_7\right>$ & $X_0(70)\to X_0(70)/\left<w_5,w_7\right>$\\
 & 72 & $X_0^*(72)$ & $X_0(72)\to X_0^*(72)$\\
 & 74 & $X_0^*(74)$ & $X_0(74)\to X_0^*(74)$\\
 & 75 & $X_0^*(75)$ & $X_0(75)\to X_0^*(75)$\\
 & 76 & $X_0^*(76)$ & $X_0(76)\to X_0^*(76)$\\
 & 77 & $X_0^*(77)$ & $X_0(77)\to X_0^*(77)$\\
 & 78 & $X_0(78)/\left<w_3,w_{13}\right>$ & $X_0(78)\to X_0(78)/\left<w_3,w_{13}\right>$\\
 & 80 & $X_0(20)$ & $\iota_{1,80,20}$\\
 & 82 & $X_0^*(82)$ & $X_0(82)\to X_0^*(82)$\\
 & 84 & $X_0(84)/\left<w_4,S_2w_{14}S_2\right>$ & $X_0(84)\to X_0(84)/w_4\to X_0(84)/\left<w_4,S_2w_{14}S_2\right>$\\
 & 85 & $\lmfdbec{85}{a}{1}$ & $X_0(85)\to E$\\
 & 86 & $X_0^*(86)$ & $X_0(86)\to X_0^*(86)$\\
 & 88 & $X_0^+(44)$ & $X_0(88)\to X_0(44)\to X_0^+(44)$\\
 & 90 & $X_0(90)/\left<w_9,V_3w_{10}\right>$ & $X_0(90)\to X_0(90)/w_9\to X_0(90)/\left<w_9,V_3w_{10}\right>$\\
 & 91 & $X_0^*(91)$ & $X_0(91)\to X_0^*(91)$\\
 & 94 & $\lmfdbec{94}{a}{1}$ & $X_0(94)\to E$\\
 & 96 & $X_0(24)$ & $\iota_{1,96,24}$\\
 & 98 & $X_0^*(98)$ & $X_0(98)\to X_0^*(98)$\\
 & 99 & $X_0^*(99)$ & $X_0(99)\to X_0^*(99)$\\
 & 100 & $X_0^*(100)$ & $X_0(100)\to X_0^*(100)$\\
 & 104 & $X_0(104)/\left<w_{104},V_2\right>$ & $X_0(104)\to X_0^+(104)\to X_0(104)/\left<w_{104},V_2\right>$\\
 & 105 & $X_0(105)/\left<w_5,w_{21}\right>$ & $X_0(105)\to X_0(105)/\left<w_5,w_{21}\right>$\\
 & 108 & $X_0^*(108)$ & $X_0(108)\to X_0^*(108)$\\
 & 109 & $\lmfdbec{109}{a}{1}$ & $X_0(109)\to E$\\
 & 110 & $X_0(110)/\left<w_{10},w_{11}\right>$ & $X_0(110)\to X_0(110)/\left<w_{10},w_{11}\right>$\\
 & 111 & $X_0^*(111)$ & $X_0(111)\to X_0^*(111)$\\
 & 112 & $X_0^+(56)$ & $X_0(112)\to X_0(56)\to X_0^+(56)$\\
 & 117 & $X_0(117)/\left<w_9,V_3w_{117}\right>$ & $X_0(117)\to X_0(117)/w_9\to X_0(117)/\left<w_9,V_3w_{117}\right>$\\
 & 118 & $X_0^*(118)$ & $X_0(118)\to X_0^*(118)$\\
 & 119 & $\lmfdbec{17}{a}{2}$ & $X_0(119)\to X_0^+(119)=\lmfdbec{17}{a}{4}\to E$\\
 & 120 & $X_0(120)/\left<w_{15},V_2w_{40}\right>$ & $X_0(120)\to X_0(120)/w_{15}\to X_0(120)/\left<w_{15},V_2w_{40}\right>$\\
 & 121 & $\lmfdbec{121}{b}{2}$ & $X_0(121)\to E$\\
 & 123 & $X_0^*(123)$ & $X_0(123)\to X_0^*(123)$\\
 & 124 & $X_0^*(124)$ & $X_0(124)\to X_0^*(124)$\\
 & 126 & $X_0(126)/\left<w_9,V_3w_7\right>$ & $X_0(126)\to X_0(126)/w_9\to X_0(126)/\left<w_9,V_3w_7\right>$\\
 & 128 & $X_0(32)$ & $\iota_{1,128,32}$\\
 & 135 & $\lmfdbec{15}{a}{7}$ & $X_0(135)\to X_0^+(135)\to E$\\
 & 136 & $X_0(136)/\left<w_8,V_2w_{136}\right>$ & $X_0(136)\to X_0(136)/w_8\to X_0(136)/\left<w_8,V_2w_{136}\right>$\\
 & 141 & $X_0^*(141)$ & $X_0(141)\to X_0^*(141)$\\
 & 142 & $X_0^*(142)$ & $X_0(142)\to X_0^*(142)$\\
 & 143 & $X_0^*(143)$ & $X_0(143)\to X_0^*(143)$\\
 & 144 & $X_0(36)$ & $\iota_{1,144,36}$\\
 & 145 & $X_0^*(145)$ & $X_0(145)\to X_0^*(145)$\\
 & 147 & $X_0(49)$ & $\iota_{1,147,49}$\\
 & 155 & $X_0^*(155)$ & $X_0(155)\to X_0^*(155)$\\
 & 159 & $X_0^*(159)$ & $X_0(159)\to X_0^*(159)$\\
 & 160 & $\lmfdbec{32}{a}{4}/\Sigma_E$ & quadratic form\\
 & 171 & $X_0(171)/\left<w_9,V_3w_{171}\right>$ & $X_0(171)\to X_0(171)/w_9\to X_0(171)/\left<w_9,V_3w_{171}\right>$\\
 & 176 & $X_0(176)/\left<w_{16},V_2w_{176}\right>$ & $X_0(176)\to X_0(176)/w_{16}\to X_0(176)/\left<w_{16},V_2w_{176}\right>$\\
 & 184 & $X_0(92)/ w_{23}$ & $X_0(184)\to X_0(92)\to X_0(92)/ w_{23}$\\
 & 188 & $X_0(94)/ w_{47}$ & $X_0(188)\to X_0(94)\to X_0(94)/ w_{47}$\\
 \hline
$5$ & 38 & $\lmfdbec{19}{a}{2}/\Sigma_E$ & quadratic form\\
 & 46 & $\lmfdbec{46}{a}{2}$ & $X_0(46)\to E$\\
 & 54 & $\lmfdbec{27}{a}{3}/\Sigma_E$ & quadratic form\\
 & 67 & $\lmfdbec{67}{a}{1}$ & $X_0(67)\to E$\\
 & 75 & $X_0(15)$ & $\iota_{1,75,15}$\\
 & 81 & $\lmfdbec{27}{a}{3}/\Sigma_E$ & quadratic form\\
 & 89 & $\lmfdbec{89}{b}{2}$ & $X_0(89)\to E$\\
 & 100 & $X_0(20)$ & $\iota_{1,100,20}$\\
\end{longtable}
\normalsize


\bibliographystyle{siam}
\bibliography{D-elliptic-modular-curves-X0N-2026-09-21-references}\vspace{0.75in}

@article {Bars1999,
    AUTHOR = {Bars, Francesc},
     TITLE = {Bielliptic modular curves},
   JOURNAL = {J. Number Theory},
  FJOURNAL = {Journal of Number Theory},
    VOLUME = {76},
      YEAR = {1999},
    NUMBER = {1},
     PAGES = {154--165},
      ISSN = {0022-314X,1096-1658},
   MRCLASS = {11G18 (11G30)},
  MRNUMBER = {1688168},
MRREVIEWER = {Joseph\ H.\ Silverman},
       DOI = {10.1006/jnth.1998.2343},
       URL = {https://doi.org/10.1006/jnth.1998.2343},
}

@misc{LMFDB,
  author       = {{The LMFDB Collaboration}},
  title        = {The {$L$}-functions and Modular Forms Database},
  year         = {2026},
  howpublished = {\url{https://www.lmfdb.org}},
  note         = {Accessed September 15, 2026}
}

@manual{sagemath,
  author  = {{The Sage Developers}},
  title   = {{SageMath}, the {Sage} Mathematics Software System},
  version = {10.9},
  year    = {2026},
  note    = {Version 10.9, \url{https://www.sagemath.org}}
}

@article{ogg1974hyperelliptic,
     author = {Ogg, Andrew P.},
     title = {Hyperelliptic modular curves},
     journal = {Bull. Soc. Math. Fr.},
     fjournal = {Bulletin de la Soci\'et\'e Math\'ematique de France},
     pages = {449--462},
     publisher = {Soci\'et\'e math\'ematique de France},
     volume = {102},
     year = {1974},
     doi = {10.24033/bsmf.1789},
     zbl = {0314.10018},
     mrnumber = {51 \#514},
     language = {en},
     url = {www.numdam.org/item/BSMF_1974__102__449_0/}
}

@book{BirkenhakeLange,
  title = {Complex Abelian Varieties},
  ISBN = {9783662063071},
  ISSN = {0072-7830},
  url = {http://dx.doi.org/10.1007/978-3-662-06307-1},
  DOI = {10.1007/978-3-662-06307-1},
  journal = {Grundlehren der mathematischen Wissenschaften},
  publisher = {Springer Berlin Heidelberg},
  author = {Birkenhake,  Christina and Lange,  Herbert},
  year = {2004}
}

@article{Abramovich:Gonality,
  author		= {Dan Abramovich},
  title		= {A linear lower bound on the gonality of modular curves},
  fjournal	= {International Mathematics Research Notices},
  journal     = {Int. Math. Res. Not.},
  year		= {1996},
  pages		= {1005-1011}
}

@article{DHJOdensitydegree,
  author  = {Derickx, Maarten and Hwang, Wontae and
             Jeon, Daeyeol and Orli{\'c}, Petar},
  title   = {Modular curves {$X_0(N)$} of density degree {$5$}},
  journal = {Math. Comp.},
  year    = {2026},
  doi     = {10.1090/mcom/4221},
  note    = {Published online July 27, 2026}
}

@Article{KadetsVogt,
  Author = {Kadets, Borys and Vogt, Isabel},
  Title = {Subspace configurations and low degree points on curves},
  FJournal = {Advances in Mathematics},
  Journal = {Adv. Math.},
  Volume = {460},
  Year = {2025},
  Note = {Article 110021},
  DOI = {10.1016/j.aim.2024.110021},
}

@article{Poonen:Gonality,
  author		= {Bjorn Poonen},
  title		= {Gonality of modular curves in characteristic $p$},
  fjournal	= {Mathematical Research Letters},
  journal     = {Math. Res. Lett},
  volume		= {14},
  year		= {2007},
  number		= {4},
  pages		= {691-701}
}

@book {cohen1,
    AUTHOR = {Cohen, Henri},
     TITLE = {A course in computational algebraic number theory},
    SERIES = {Graduate Texts in Mathematics},
    VOLUME = {138},
 PUBLISHER = {Springer-Verlag},
   ADDRESS = {Berlin},
      YEAR = {1993},
     PAGES = {xii+534},
      ISBN = {3-540-55640-0},
   MRCLASS = {11Y40 (11Rxx 68Q40)},
MRREVIEWER = {Joe P. Buhler},
}

@preamble{
   "\def\cprime{$'$} "
}

@article {derickxVH,
    AUTHOR = {Derickx, Maarten and van Hoeij, Mark},
     TITLE = {Gonality of the modular curve {$X_1(N)$}},
   JOURNAL = {J. Algebra},
  FJOURNAL = {Journal of Algebra},
    VOLUME = {417},
      YEAR = {2014},
     PAGES = {52--71},
      ISSN = {0021-8693},
   MRCLASS = {14G35 (14G25 14H25 14H51)},
MRREVIEWER = {Christian Frederik Wei\ss },
       DOI = {10.1016/j.jalgebra.2014.06.026},
       URL = {https://doi.org/10.1016/j.jalgebra.2014.06.026},
}

@book {Stein2007,
    AUTHOR = {William Arthur Stein},
     TITLE = {Modular forms, a computational approach},
   JOURNAL = {Graduate Studies in Mathematics},
    VOLUME = {79},
     PUBLISHER = {Amer. Math. Soc., Providence, RI},
     YEAR = {2007},
}

@article{AgasheRibetStein2006,
  Author = {Agashe, Amod and Ribet, Kenneth and Stein, William A.},
  Title = {The Manin Constant},
  FJournal = {Pure and Applied Mathematics Quarterly},
  Journal = {Pure Appl. Math. Q.},
  Volume = {2},
  Number = {2},
  Pages = {617--636},
  Year = {2006},
  DOI = {10.4310/PAMQ.2006.V2.N2.A11},
}

@article{Hasegawa1997,
author = {Hasegawa, Yuji},
title = {{Hyperelliptic Modular Curves $X_0^*(N)$}},
volume = {81},
journal = {Acta Arith.},
fjournal = {Acta Arithmetica},
number = {4},
pages = {369 -- 385},
year = {1997},
}

@article{JEON2018319,
title = {{Bielliptic modular curves $X_0^+(N)$}},
journal = {J. Number Theory},
fjournal = {Journal of Number Theory},
volume = {185},
pages = {319-338},
year = {2018},
issn = {0022-314X},
doi = {https://doi.org/10.1016/j.jnt.2017.09.006},
url = {https://www.sciencedirect.com/science/article/pii/S0022314X17303475},
author = {Daeyeol Jeon},
}

@Article{Jeon2021,
 Author = {Jeon, Daeyeol},
 Title = {Modular curves with infinitely many cubic points},
 FJournal = {Journal of Number Theory},
 Journal = {J. Number Theory},
 ISSN = {0022-314X},
 Volume = {219},
 Pages = {344--355},
 Year = {2021},
 Language = {English},
 DOI = {10.1016/j.jnt.2020.09.006},
 zbMATH = {7276963},
 Zbl = {1469.11196}
}

@article{Jeon2022,
  doi = {10.4064/aa220527-7-10},
  url = {https://doi.org/10.4064/aa220527-7-10},
  year = {2022},
  publisher = {Institute of Mathematics,  Polish Academy of Sciences},
  volume = {206},
  number = {2},
  pages = {171--188},
  author = {Daeyeol Jeon},
  title = {{Trielliptic modular curves $X_1(N)$}},
  journal = {Acta Arith.},
  fjournal = {Acta Arithmetica}
}

@article{Jeon2023,
  author  = {Jeon, Daeyeol},
  title   = {Tetraelliptic modular curves {$X_1(N)$}},
  journal = {Acta Arith.},
  volume  = {219},
  number  = {1},
  pages   = {33--51},
  year    = {2025},
  doi     = {10.4064/aa231102-3-2}
}

@Article{JeonPark05,
 Author = {Jeon, Daeyeol and Park, Euisung},
 Title = {Tetragonal modular curves},
 FJournal = {Acta Arithmetica},
 Journal = {Acta Arith.},
 ISSN = {0065-1036},
 Volume = {120},
 Number = {3},
 Pages = {307--312},
 Year = {2005},
 Language = {English},
 DOI = {10.4064/aa120-3-6},
 zbMATH = {5002948},
 Zbl = {1165.11322}
}

@Article{HasegawaShimura_trig,
 Author = {Hasegawa, Yuji and Shimura, Mahoro},
 Title = {Trigonal modular curves},
 FJournal = {Acta Arithmetica},
 Journal = {Acta Arith.},
 ISSN = {0065-1036},
 Volume = {88},
 Number = {2},
 Pages = {129--140},
 Year = {1999},
 Language = {English},
 DOI = {10.4064/aa-88-2-129-140},
 zbMATH = {1310234},
 Zbl = {0947.11018}
}

@article{bars22biellipticquotients,
  Author = {Francesc Bars and Mohamed Kamel and Andreas Schweizer},
  Title = {{Bielliptic quotient modular curves of $X_0(N)$}},
  FJournal = {Mathematics of Computation},
  Journal = {Math. Comp.},
  Volume = {92},
  Number = {340},
  Pages = {895--929},
  Year = {2023},
  DOI = {10.1090/mcom/3800},
}

@article{NajmanOrlic2023,
  Author = {Filip Najman and Petar Orli{\'c}},
  Title = {{Gonality of the modular curve $X_0(N)$}},
  FJournal = {Mathematics of Computation},
  Journal = {Math. Comp.},
  Volume = {93},
  Number = {346},
  Pages = {863--886},
  Year = {2024},
  DOI = {10.1090/mcom/3873},
}

@article{DerickxOrlic2024,
  Author = {Derickx, Maarten and Orli{\'c}, Petar},
  Title = {{Modular curves $X_0(N)$ with infinitely many quartic points}},
  FJournal = {Research in Number Theory},
  Journal = {Res. Number Theory},
  Volume = {10},
  Year = {2024},
  Note = {Article 42},
  DOI = {10.1007/s40993-024-00525-6},
}

@article{DerickxOrlic2026Intermediate,
  title = {Intermediate modular curves with infinitely many quartic points},
  volume = {22},
  ISSN = {1793-7310},
  url = {http://dx.doi.org/10.1142/S1793042126500557},
  DOI = {10.1142/s1793042126500557},
  number = {05},
  journal = {Int. J. Number Theory},
  fjournal = {International Journal of Number Theory},
  publisher = {World Scientific Pub Co Pte Ltd},
  author = {Derickx,  Maarten and Orlić,  Petar},
  year = {2026},
  month = Feb,
  pages = {1053–1074}
}

@article{Orlic2025Intermediate,
  Author = {Orli{\'c}, Petar},
  Title = {Tetragonal intermediate modular curves},
  FJournal = {Acta Arithmetica},
  Journal = {Acta Arith.},
  Volume = {220},
  Number = {3},
  Pages = {263--288},
  Year = {2025},
  DOI = {10.4064/aa240831-22-3},
}

@Inbook{Milne1986AV,
author="Milne, J. S.",
title="Abelian Varieties",
bookTitle="Arithmetic Geometry",
year="1986",
publisher="Springer New York",
address="New York, NY",
pages="103--150",
isbn="978-1-4613-8655-1",
doi="10.1007/978-1-4613-8655-1_7",
url="https://doi.org/10.1007/978-1-4613-8655-1_7"
}

@article{LING199539,
title = {Shimura Subgroups and Degeneracy Maps},
journal = {J. Number Theory},
fjournal = {Journal of Number Theory},
volume = {54},
number = {1},
pages = {39-59},
year = {1995},
issn = {0022-314X},
doi = {https://doi.org/10.1006/jnth.1995.1100},
url = {https://www.sciencedirect.com/science/article/pii/S0022314X85711006},
author = {S. Ling},
}

@article{FinckePohst1985,
 ISSN = {00255718, 10886842},
 URL = {http://www.jstor.org/stable/2007966},
 author = {U. Fincke and M. Pohst},
 journal = {Math. Comp.},
 fjournal = {Mathematics of Computation},
 number = {170},
 pages = {463--471},
 publisher = {American Mathematical Society},
 title = {Improved Methods for Calculating Vectors of Short Length in a Lattice, Including a Complexity Analysis},
 volume = {44},
 year = {1985}
}

@article{Kim2002,
  Author = {Kim, Henry},
  Title = {{Functoriality for the exterior square of $\textup{GL}_4$
    and the symmetric fourth of $\textup{GL}_2$.
    Appendix 2: Refined estimates towards the Ramanujan and Selberg
    conjectures (by Henry Kim and Peter Sarnak)}},
  FJournal = {Journal of the American Mathematical Society},
  Journal = {J. Amer. Math. Soc.},
  Volume = {16},
  Number = {1},
  Pages = {139--183},
  Year = {2003},
  DOI = {10.1090/S0894-0347-02-00410-1},
}

@misc{Cremona:ECData,
  author = {Cremona, John E.},
  title = {Elliptic curve data},
  year = {2019},
  note = {Available at \url{https://johncremona.github.io/ecdata/}; see Table Eight on optimality and the Manin constant, and \texttt{manin.txt}}
}

@misc{JeonKwon2026StrongWeil,
  author = {Jeon, Daeyeol and Kwon, Yongjae},
  title = {Strong {Weil} Degree Divisibility at Higher Levels},
  year = {2026},
  eprint = {2608.06054},
  archiveprefix = {arXiv},
  primaryclass = {math.NT},
  note = {Preprint, available at \url{https://arxiv.org/abs/2608.06054}}
}

\end{document}